\long\def\symbolfootnote[#1]#2{\begingroup%
\def\thefootnote{\fnsymbol{footnote}}\footnote[#1]{#2}\endgroup}
\newtheoremstyle{remark}
  {}{}{}{}{\bfseries}{.}{.5em}{{\thmname{#1 }}{\thmnumber{#2}}{\thmnote{ (#3)}}}
\newtheorem{theo}{Theorem}[section]
\newtheorem{lem}{Lemma}[section]
\theoremstyle{definition}
\newtheorem*{ex}{Example}
\theoremstyle{remark}
\newcommand{\dx}{ {\rm d}}
\newcommand{\otb}{\overline{\Omega}_T}
\newcommand{\supp}{\textnormal{supp}}
\newcommand{\gt}{\mathcal{G}_T}
\newcommand{\ot}{\Omega_T}
\newcommand{\otc}{\overline{\Omega}_T}
\newcommand{\ots}{\Omega_{T,\sigma}}
\newcommand{\otsc}{\overline{\Omega}_{T,\sigma}}
\newcommand{\chpbt}{C^{\alpha(\cdot)}}
\newcommand{\chp}[2]{C^{#1,#2,\alpha(\cdot)}}
\newcommand{\ac}{\alpha(\cdot)}
\newcommand{\R}{\mathbb{R}}
\newcommand{\expo}[1]{\operatorname{exp}\left[#1\right]}
\newcommand{\dist}[2]{\textnormal{dist}\left(#1,#2\right)}
\newcommand{\diam}[1]{\textnormal{diam}\left(#1\right)}
\newcommand{\otg}{\Omega_T\cup\Gamma}
\newcommand{\I}{\mathcal{I}}
\newcommand{\xfrac}[2]{#1/#2}
\let \epsilon \varepsilon
\let \phi \varphi
\begin{document}
\title{\bf A priori estimates and theory of existence for parabolic equations in variable H\"older spaces}
\author{ Piotr Micha{\l} Bies\\
\it\small{Department of Mathematics and Information Sciences,}\\
\it\small{Warsaw University of Technology,}\\
\it\small{Ul. Koszykowa 75, 00-662 Warsaw, Poland.}\\
{\tt biesp@mini.pw.edu.pl}}
\maketitle
\begin{abstract}
We study parabolic equations in variable H\"{o}lder spaces on domains of Euclidean spaces.  The existence and uniqueness of solutions is proved.
\end{abstract}
\bigskip\noindent
{\bf Keywords}: Variable H\"{o}lder spaces; Schauder estimates; Variable exponent spaces; Parabolic equations.

\bigskip\noindent
{\bf 2010 Mathematics Subject Classification:}  35J25; 26A16; 35B45.

\section{Introduction}

Let $\Omega$ be a bounded domain in $\mathbb{R}^n$ and let $T>0$.
We investigate linear parabolic operators in the following form
\begin{equation*}
u_t-Lu,
\end{equation*}
where the coefficients of the elliptic operator $L$ are in the variable H\"{o}lder space $C^{\alpha(\cdot)}(\otc)$. 

We are interested in the study of the following boundary value problem
\begin{align}\label{diripr}
\begin{cases}
u_t-L u=f & \text{in} \, \ot,\\
u=\phi & \text{ on} \, \partial\Omega\times (0,T),\\
u=g&\text{ on}\,\Omega\times\{0\},
\end{cases}
\end{align}
where $f$, $\phi$ and $g$ are elements of  variable H\"{o}lder spaces. We prove Schauder estimates for this problem with exponents $\alpha$ satisfying the so-called log-H\"{o}lder condition. Furthermore, existence and uniqueness of solutions to problem (\ref{diripr}) is proven in $\chp{2}{1}(\otb)$.

Variable function spaces were introduced as a tool to study partial differential equations with nonstandard growth (see \cite{kov,zhi}). However, these spaces are connected in a natural way with some engineer and computer science problems. Namely, they are used to model fluids which viscosity change in response to an electric field, i.e. \emph{electrorheological fluids} (see \cite{ruz}). In paper \cite{term} appears a model of a thermorheological flow, which is a fluid whose viscosity depends on a temperature. Furthermore, these spaces can be used in the image denoising (see \cite{den,pi}). Blomgren et al. \cite{blom} suggested that in the image processing an image of a better quality can be obtained by an interpolation techniques that use variable exponents.

Linear parabolic equations are natural extension of the elliptic ones. Elliptic equations are considered in \cite{gt, krylov}. We studied elliptic equations in variable H\"older spaces in \cite{bies1,bies}. Parabolic equations with coefficients in other function spaces were studied in the mathematical literature. For instance, the parabolic 
problems in the classical H\"{o}lder spaces are discussed in \cite{par1,par2,par3,par4,par5,par6,par7,par9,par8,par10,  par11,par12,par13,par14}. We strongly recommend monographs devoted to this topic \cite{fried,krylov,lieb}. 

The paper is divided into six sections. In Section \ref{prel} we introduce notations and define H\"older spaces with variable exponent. In Section~\ref{intsec} we prove a priori interior estimates for the heat equation and then for general parabolic equation. 
 Section~\ref{bousec} is devoted to study boundary Schauder estimates. Next, in Section \ref{glosec} we prove global a priori estimates. Finally, in Section \ref{molsec}  we obtain existence and uniqueness of solutions to boundary value problems. Moreover, we present an equation for which variable H\"older spaces are optimal. We formulate the interpolation inequalities in parabolic H\"older spaces in Appendix \ref{apint}. In Appendix \ref{apest} we show an estimation for certain integral operator.
 
\section{Preliminaries}\label{prel}
Let $\Omega\subset\mathbb{R}^n$ be an open and bounded set and let $T>0$. Denote $\Omega_T=\Omega\times(0,T)$. We call  a set $\gt=\partial\ot\setminus\Omega\times\{T\}$ a parabolic boundary of $\ot$. We define a metric  
\begin{align}\label{metr}
d(P,Q)=\max (|x_1-x_2|,\sqrt{|t_1-t_2|})
\end{align}
for $P=(x_1,t_1),Q=(x_2,t_2)\in \ot$. This metric is equivalent to the  Euclidean distance on $\ot$. In this paper the norm $|\cdot|$ for space variables is the maximum norm i.e.
\begin{align*}
|x|=\max_{i=1,\ldots,n}|x_i|\qquad\textnormal{ for }x=(x_1,\ldots,x_n)\in\mathbb{R}^n.
\end{align*}

A set $C({\otc})$ consists of all continuous functions on $\ot$, which can be extended continuously on $\otc$. Let 
$$C^{2,1}(\otc)=\left\{u\in C(\otc)\colon D^{\alpha}u\in C(\otc)\textrm{ and } u_t\in C(\otc) \textrm{ for all }|\alpha|\leq 2\right\}.$$
Higher order spaces are defined similarly; for $k\in\mathbb{N}$ we denote
\begin{align*}
C^k(\otc)=\left\{u\in C(\otc)\colon D^{\beta}u\in C(\otc)\textrm{ for all } |\beta|\leq k\right\}.
\end{align*}

Now, we turn to an introduction of basics of the theory of variable H\"older spaces (for details see \cite{alm2,alm1}). If one wants to read more about function spaces with a variable exponent, we refer to the monographs \cite{uribe} and~\cite{dien}. Any function $\alpha\colon\ot\to(0,1]$ will be called a variable exponent.
A semi-norm for a function $u$ is defined as follows
\begin{align*}
[u]_{\alpha(\cdot),\ot}=[u]_{0,\ac,\ot}=\sup_{\substack{P,Q\in\ot\\P\neq Q}}\frac{|u(P)-u(Q)|}{d^{\alpha(Q)}(P,Q)}.
\end{align*}
Hence, we are able to define a space with variable exponent
\begin{align*}
\chpbt(\otc)=\left\{ u\in C(\otc)\colon [u]_{\ac,\ot}<\infty\right\}
\end{align*}
and
\begin{align*}
\chp{2}{1}(\otc)=\left\{u\in C^{2,1}(\otc)\colon[u_t]_{\ac,\ot}<\infty \textrm{ and } [D^2 u]_{\ac,\ot}<\infty\right\}.
\end{align*}

Analogously we define $C^{k,\ac}(\otc)$ for $k\in\mathbb{N}$.
Let $u$ be a function defined on $\ot$. We introduce the following notations
\begin{align*}
|u|_{0,\ot}=\sup_{X\in\ot}|u(X)|,\qquad|u|_{0,\ac,\ot}=|u|_{0,\ot}+[u]_{0,\ac,\ot}.
\end{align*}
It is easy to see that $C(\otc)$ with $|\cdot|_{0,\ac,\ot}$ is a Banach space. For $u\in C^k(\otc)$ and for $k\in\mathbb{N}$ we introduce 
\begin{align*}
[u]_{k,\ot}=\sup_{X\in\ot}|D^ku(X)|,\qquad
|u|_{k,\ot}=\sum_{i=0}^k[u]_{k,\ot}.
\end{align*}
Finally, for $u\in C^{2,1}(\otc)$ we define
\begin{align*}
|u|_{2,1,\alpha(\cdot),\ot}=|u|_{{0,\ot}}+|Du|_{{0,\ot}}+|D^2u|_{0,\alpha(\cdot),\ot}+|u_t|_{0,\alpha(\cdot),\ot}.
\end{align*}
Once more, $C^{2,1,\ac}(\ot)$ is a Banach space with $|\cdot|_{2,1,\ac,\ot}$.

For a fixed $P\in\ot$ and a function $u$ we define
\begin{align*}
[u]_{\alpha(P),P,\ot}=\sup_{\substack{Q\in\ot\\Q\neq P}}\frac{|u(P)-u(Q)|}{d^{\alpha(P)}(P,Q)}.
\end{align*}
We restrict our attention to certain class of exponents $\alpha$. The exponent $\alpha\colon\ot\to (0,1]$ is called log-H\"older continuous, if for all $P,Q\in\ot$ the inequality
\begin{align}\label{rev1}
|\alpha(P)-\alpha(Q)|\left|\ln d(P,Q)\right|\leq M
\end{align}
holds for some $M>0$. The smallest $M$ satisfying (\ref{rev1}) is denoted by $c_{\log}(\alpha)$. Furthermore, we denote 
\begin{align*}
\alpha^+=\sup_{X\in\ot}\alpha(X),\qquad\alpha^-=\inf_{X\in\ot}\alpha(X).
\end{align*}
We shall consider only exponents which are log-H\"older continuous and satisfy
\begin{align*}
0<\alpha^-\leq\alpha^+<1.
\end{align*}

We define a spacetime semicube with top $P=(x_0,t_0)\in\R^n\times\R_+$ and radius $\delta>0$:
\begin{align*}
N(P,\delta)=\{Q=(x,t)\colon d(P,Q)\leq \delta \textrm{ and } t\leq t_0\}. 
\end{align*}
We remind the fundamental solution of the heat equation
 \begin{align*}
G(x-y,t-s)=G(x,t;y,s)=\frac{(s-t)^{-n/2}}{(2\sqrt{\pi})^n}\expo{-\frac{|x-y|^2}{4(s-t)}}
\end{align*}
for $x,y\in\R^n$ and $s,t\in\R$ such that $s>t$.
It is easy to verify that $G$ satisfies the following equations
\begin{align*}
G_s-\Delta_yG=0
\end{align*}
and
\begin{align*}
G_t+\Delta_xG=0.
\end{align*}
It can be also proved
\begin{align}\label{helpineq2}
|D_s^kD^j_y G(x-y, t-s)|\leq & C(s-t)^{-\frac{(n+2k+j)}{2}}\expo{-\frac{|x-y|^2}{5(s-t)}},
\end{align}
where $0\leq k+j\leq 4$.

In paper \cite{bies1}, the following useful lemma about an extension of functions from H\"older spaces with a variable exponent was proved.
\begin{lem}\label{extlemsch}
Let $\Omega\subset\mathbb{R}^n$ be an open and bounded set with the boundary of class $C^2$ and let $\alpha\in\mathcal{A}^{\log}(\Omega)$. 
 Then, there exists $\sigma>0$ and $\bar{\alpha}\in\mathcal{A}^{\log}({\Omega}_{\sigma})$  with $\bar{\alpha}|_{\Omega}=\alpha$, $\bar{\alpha}^+=\alpha^+$,
 $\bar{\alpha}^-=\alpha^-$ such that for any $f\in C^{\alpha({\cdot})}(\bar{\Omega})$, there exists 
$\bar{f}\in C^{\bar{\alpha}(\cdot)}(\bar{\Omega}_{\sigma})$ for which $\bar{f}|_{\Omega}=f$. Moreover, there exists a constant $C=C(\Omega, n, \alpha^-, \alpha^+, c_{\log}(\alpha))$ such that the inequality
\begin{align*}
|\bar{f}|_{0,\bar{\alpha}(\cdot),\Omega_{\sigma}}\leq C|f|_{0,\alpha(\cdot),\Omega}
\end{align*}
holds.
\end{lem}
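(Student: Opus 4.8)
The plan is a localize--flatten--reflect--patch construction, carried out in two stages: first extend the exponent, then the function.

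\textit{Stage 1: extending $\alpha$.} Since $\partial\Omega$ is of class $C^2$, there is $\sigma_0>0$ such that the nearest point projection $\pi$ onto $\partial\Omega$ is well defined and Lipschitz on the neighbourhood $\Omega_{\sigma_0}$; set $\pi(x)=x$ for $x\in\overline\Omega$ and define $\bar\alpha(x)=\alpha(\pi(x))$. Then $\bar\alpha^{\pm}=\alpha^{\pm}$ and $\bar\alpha|_\Omega=\alpha$ trivially. To see $\bar\alpha\in\mathcal A^{\log}(\Omega_\sigma)$ for $\sigma\le\sigma_0$ small, one first checks $d(\pi P,\pi Q)\le C_0\,d(P,Q)$ for all $P,Q\in\Omega_\sigma$ (when $P\in\overline\Omega$ this follows from $\dist{Q}{\partial\Omega}\le d(Q,P)$; when both points lie outside $\overline\Omega$ it is the Lipschitz bound for $\pi$). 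Hence for $d(P,Q)$ small, $|\bar\alpha(P)-\bar\alpha(Q)|\,|\ln d(P,Q)|\le c_{\log}(\alpha)\,|\ln d(P,Q)|/|\ln(C_0 d(P,Q))|\le 2c_{\log}(\alpha)$ after shrinking $\sigma$ in terms of $C_0$, while for $d(P,Q)$ bounded away from $0$ the bound is immediate. This fixes $\sigma$ and $\bar\alpha$.

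\textit{Stage 2: extending $f$.} Cover $\partial\Omega$ by finitely many open boxes $V_1,\dots,V_N$ on each of which a $C^2$ diffeomorphism $\Phi_j$ straightens $\partial\Omega\cap V_j$ to a piece of a hyperplane and carries $\Omega\cap V_j$ into a half space, with $\Phi_j,\Phi_j^{-1}$ bi-Lipschitz on the relevant compact sets, all data depending only on $\Omega$ and $n$; add $V_0\Subset\Omega$ so that $\overline{\Omega_\sigma}\subset\bigcup_{j=0}^N V_j$, and pick a $C^\infty$ partition of unity $\{\zeta_j\}_{j=0}^N$ subordinate to this cover. For $j\ge 1$ let $f_j$ be obtained from $f|_{\Omega\cap V_j}$ by transporting via $\Phi_j$, reflecting evenly across the flat boundary, and transporting back by $\Phi_j^{-1}$; then $f_j=f$ on $\Omega\cap V_j$. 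Set $f_0=f$ (only its values on $\supp\zeta_0\Subset\Omega$ matter, the rest being multiplied by zero) and define
\begin{align*}
\bar f=\sum_{j=0}^N\zeta_j f_j\qquad\text{on }\Omega_\sigma .
\end{align*}
Since each $f_j$ equals $f$ on $\Omega$ and $\sum_j\zeta_j\equiv 1$, we get $\bar f|_\Omega=f$.

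\textit{Stage 3: the estimate.} The bound $|\bar f|_{0,\bar\alpha(\cdot),\Omega_\sigma}\le C\,|f|_{0,\alpha(\cdot),\Omega}$ reduces to three elementary facts applied along the construction. First, the even reflection $R$ across a hyperplane is an isometry of $\R^n$ with the maximum norm, so it preserves variable H\"older seminorms exactly, and by construction $\bar\alpha$ is invariant under the straightened reflection (a point and its mirror image project to the same boundary point), so composing with $R$ keeps the exponent correct. Second, if $\Phi$ is $L$-bi-Lipschitz and $g\in C^{\beta(\cdot)}$ then $g\circ\Phi\in C^{(\beta\circ\Phi)(\cdot)}$ with seminorm $\le\max(1,L^{\alpha^+})[g]_{\beta(\cdot)}$, and the bi-Lipschitz distortion changes $|\ln d|$ only by an additive constant, so $c_{\log}$ of the transported exponent stays controlled. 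Third, multiplication by a bounded Lipschitz cutoff $\zeta$ is bounded on $C^{\bar\alpha(\cdot)}$ because $\bar\alpha\le 1$: $[\zeta f]_{\bar\alpha(\cdot)}\le\|\zeta\|_\infty[f]_{\bar\alpha(\cdot)}+\mathrm{Lip}(\zeta)\,\diam{\Omega_\sigma}^{\,1-\alpha^-}\|f\|_\infty$. Summing over the finite cover yields the inequality with $C=C(\Omega,n,\alpha^-,\alpha^+,c_{\log}(\alpha))$. I expect the main obstacle to be the bookkeeping in the second fact: one must track how the flattening diffeomorphisms and the reflections interact with the projection-based definition of $\bar\alpha$, ensuring both that the statement ``$\bar f\in C^{\bar\alpha(\cdot)}$'' is meaningful and that the logarithmic moduli of continuity are never distorted by more than a factor depending on the fixed geometric data and $c_{\log}(\alpha)$; everything else is the standard flatten--reflect--patch scheme for H\"older extensions.
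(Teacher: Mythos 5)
Your construction is correct in outline but follows a genuinely different route from the paper's. The paper does not reprove this lemma here (it cites \cite{bies1}), but the construction is recalled explicitly in the proof of Lemma~\ref{extlem}: writing each point of the tubular neighbourhood as $x=x_0+dn(x_0)$ with $x_0\in\partial\Omega$, $|d|<\sigma$, and $n(x_0)$ the outer unit normal, one sets $x^*=x_0-dn(x_0)$ and defines \emph{both} $\bar\alpha(x)=\alpha(x^*)$ and $\bar f(x)=f(x^*)$ for $x\notin\overline\Omega$. A single global reflection in normal coordinates does both jobs at once, so there are no local charts, no partition of unity, and the compatibility between the extended exponent and the extended function is automatic. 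Your version instead extends $\alpha$ by nearest--point projection and $f$ by a flatten--reflect--patch scheme, which is the other standard route; it works but carries extra bookkeeping, and one intermediate assertion in your Stage~3 is not true as stated: the claim that ``a point and its mirror image [under the flattened reflection $\Phi_j^{-1}\circ R\circ\Phi_j$] project to the same boundary point'' fails for a generic $C^2$ straightening chart, since such a reflection does not preserve the normal foliation and hence does not commute with $\pi$. You would have to either (i) take the $\Phi_j$ to be boundary normal coordinates --- in which case every local reflection coincides with the global $x\mapsto x^*$ and the partition of unity collapses, recovering the paper's construction --- or (ii) live with a mismatch of size $O(\mathrm{dist}(x,\partial\Omega))$ between $\pi(x)$ and the reflected point and absorb the resulting exponent discrepancy through the log--H\"older modulus, which is precisely the kind of estimate the paper's single reflection makes unnecessary. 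The remaining ingredients you list (the flat reflection being a max-norm isometry, bi-Lipschitz transport of variable H\"older seminorms and of $c_{\log}$, boundedness of multiplication by a smooth cutoff using $\alpha^+<1$) are standard and correct.
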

In the above Lemma the standard norm in variable H\"older spaces is used i.e.
 \begin{align*}
|u|_{0,\ac,\Omega}=|u|_{0,\Omega}+\sup_{\substack{x,y\in\Omega\\x\neq y}}\frac{|u(x)-u(y)|}{|x-y|^{\alpha(x)}}.
\end{align*}

\section{Interior estimates}\label{intsec}

In this section we prove Schauder estimates. We start with the interior estimates. 
 Let us fix  $P\in \R^{n}\times\R_+$ and $d>0$. We will denote $N=N(P,d)$.
The following theorem is fundamental in this part of the article.
\begin{theo}\label{lem1}
Let $\Omega\subset\R^n$ be an open and bounded set and let $T>0$. Let $N\subset\ot$ and let $u\in \chp{2}{1}(\overline{N})$ be a solution of the equation  
$$
L_0u=\Delta u-u_t=f,
$$
 where $f\in\chpbt(\overline{N})$. Then the inequalities
\begin{align}
|D^2u(P)|&\leq C\left( |f|_{0,N}+d^{\alpha(P)}[f]_{\alpha(P),P,N}+|u|_{0,N}d^{-2}\right),\nonumber\\
d^{\alpha(Q)}\frac{|D^2u(Q)-D^2u(P)|}{d^{\alpha(Q)}(Q,P)}&\leq C\left(|f|_{0,N}+d^{\alpha(P)}[f]_{\alpha(P),P,N}+d^{\alpha(Q)}[f]_{\alpha(Q),Q,N}+|u|_{0,N}d^{-2}\right)
\label{tezin1}\end{align}
hold for any $P$ such that $d(P,Q)\leq \frac{d}{4}$ and $Q\in N$, where
  $C=C\left(\textnormal{diam}\left(\ot\right),n, \alpha^+,\alpha^-,c_{\log}(\alpha) \right)$.
\end{theo}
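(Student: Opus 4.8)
The plan is to split $u$ into a heat potential that carries the inhomogeneity and a caloric remainder, and to estimate each piece classically: the potential through the mapping properties of the singular integral operator of Appendix~\ref{apest}, the caloric part through interior derivative bounds for the heat equation. First I would fix a cut-off $\eta\in C^{\infty}$ with $\eta\equiv1$ on $N(P,d/2)$, $\supp\eta\subset N(P,3d/4)$, $|D^{k}\eta|\le Cd^{-k}$ and $|\eta_{t}|\le Cd^{-2}$, extend $\eta f$ by zero, and let $v$ be the heat potential of $\eta f$. Since $\eta f$ is (locally) H\"older continuous, classical heat-potential theory — using the bounds~(\ref{helpineq2}) on the derivatives of $G$ — gives $v\in C^{2,1}$ near $P$ with $L_{0}v=\eta f$; in particular $L_{0}v=f$ on $N(P,d/2)$ and $|v|_{0,N(P,d/2)}\le Cd^{2}|f|_{0,N}$ by integrating $G$. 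Then $w:=u-v$ satisfies $L_{0}w=0$ on $N(P,d/2)$, i.e.\ $w$ is caloric there, so it suffices to bound $D^{2}v$ and $D^{2}w$ separately.

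For the pointwise bound at $P=(x_{0},t_{0})$ I would use the cancellation $\int_{\R^{n}}D^{2}_{x}G(x_{0}-y,\tau)\,\dx y=0$ ($\tau>0$) to write $D^{2}v(P)=\pm\int\!\!\int D^{2}_{x}G(x_{0}-y,t_{0}-s)\big[(\eta f)(y,s)-(\eta f)(x_{0},s)\big]\,\dx y\,\dx s$, and split the region of integration into $N(P,d/2)$ and its complement. On $N(P,d/2)$ one has $\eta f=f$, and since $\sqrt{|t_{0}-s|}\le d((y,s),P)$, the triangle inequality through $P$ gives $|f(y,s)-f(x_{0},s)|\le2[f]_{\alpha(P),P,N}\,d((y,s),P)^{\alpha(P)}$; the integral of $|D^{2}_{x}G|$ against this weight is $\le Cd^{\alpha(P)}$ by Appendix~\ref{apest}, contributing $\le Cd^{\alpha(P)}[f]_{\alpha(P),P,N}$. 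Off $N(P,d/2)$ the integrand is supported in a time-slab of width $\lesssim d^{2}$ where $D^{2}_{x}G$ is non-singular, $\int|D^{2}_{x}G(x_{0}-y,t_{0}-s)|\,\dx y\,\dx s\le C$ by~(\ref{helpineq2}), and $|(\eta f)(y,s)-(\eta f)(x_{0},s)|\le2|f|_{0,N}$, contributing $\le C|f|_{0,N}$. For $w$, the interior estimates for caloric functions (or the same cut-off/potential argument applied to $w$, whose localisation has right-hand side supported away from $P$) give $|D^{2}w(P)|\le Cd^{-2}|w|_{0,N(P,d/2)}\le Cd^{-2}|u|_{0,N}+C|f|_{0,N}$. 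Adding the contributions gives the first inequality.

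For the H\"older quotient between $P$ and $Q=(x_{1},t_{1})\in N$ with $\rho:=d(P,Q)\le d/4$, write $D^{2}u(Q)-D^{2}u(P)=(D^{2}v(Q)-D^{2}v(P))+(D^{2}w(Q)-D^{2}w(P))$. In the representation of $D^{2}v$ one integrates over $N(P,2\rho)$ and over its complement. Over $N(P,2\rho)$ — where $\eta f=f$ and the auxiliary points stay in $N$ with $d((y,s),Q)\le3\rho$ — applying Appendix~\ref{apest} anchored once at $P$ and once at $Q$ bounds this part by $C\big(\rho^{\alpha(P)}[f]_{\alpha(P),P,N}+\rho^{\alpha(Q)}[f]_{\alpha(Q),Q,N}\big)$. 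Over the complement $d((y,s),P)\ge2\rho$, so the mean-value form of~(\ref{helpineq2}) gives $|D^{2}_{x}G(x_{1}-y,t_{1}-s)-D^{2}_{x}G(x_{0}-y,t_{0}-s)|\le C\rho\,d((y,s),P)^{-(n+3)}$, whose parabolic integral is $\le C$; combined with the dichotomy $\eta f=f$ (middle range, where $|f(y,s)-f(x_{0},s)|\le2[f]_{\alpha(P),P,N}d((y,s),P)^{\alpha(P)}$ yields $C\rho^{\alpha(P)}[f]_{\alpha(P),P,N}$) / $|\eta f|\le|f|_{0,N}$ (far range, yielding $C\rho d^{-1}|f|_{0,N}$), and with the discrepancy $(\eta f)(x_{0},s)-(\eta f)(x_{1},s)$ absorbed using the $y$-cancellation and the anchored seminorms, this finishes $D^{2}v$. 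Dividing by $\rho^{\alpha(Q)}$ and multiplying by $d^{\alpha(Q)}$, the mismatched powers are reconciled by the log-H\"older inequality~(\ref{rev1}): since $\rho\le d\le\diam{\ot}$ it forces $\rho^{\alpha(P)-\alpha(Q)},\,d^{\alpha(P)-\alpha(Q)}\le C\big(c_{\log}(\alpha),\alpha^{+},\alpha^{-},\diam{\ot}\big)$, so e.g.\ $d^{\alpha(Q)}\rho^{\alpha(P)-\alpha(Q)}[f]_{\alpha(P),P,N}\le Cd^{\alpha(P)}[f]_{\alpha(P),P,N}$, and likewise for the other terms. Finally $w$, being caloric on $N(P,d/2)$, is smooth there with $|D^{3}w|\le Cd^{-3}|w|_{0,N(P,d/2)}$ and $|\partial_{t}D^{2}w|\le Cd^{-4}|w|_{0,N(P,d/2)}$ on $N(P,d/4)$, so a parabolic mean-value estimate yields $|D^{2}w(Q)-D^{2}w(P)|\le C(\rho d^{-3}+\rho^{2}d^{-4})|w|_{0,N(P,d/2)}$; dividing by $\rho^{\alpha(Q)}$, multiplying by $d^{\alpha(Q)}$ and using $\rho\le d/4$, $0<\alpha(Q)\le1$ turns this into $\le Cd^{-2}|w|_{0,N(P,d/2)}\le Cd^{-2}|u|_{0,N}+C|f|_{0,N}$. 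Collecting everything gives~(\ref{tezin1}).

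The main obstacle is the estimate for $D^{2}v$, i.e.\ the boundedness on variable H\"older spaces of $f\mapsto D^{2}(\text{heat potential of }f)$ with the sharp powers of $d$, in both the pointwise and the H\"older-quotient forms — this is precisely the content of Appendix~\ref{apest}, where all the variable-exponent bookkeeping (replacing $\alpha(Q)$ by $\alpha(P)$ inside kernel integrals, comparing $d^{\alpha(P)}$ with $d^{\alpha(Q)}$) is carried out via~(\ref{rev1}); the handling of the discrepancy term in the H\"older estimate is the most delicate point. The dependence of $C$ on $\diam{\ot}$ enters only through the above far-field integrals and the caloric interior estimates.
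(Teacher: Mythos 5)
Your decomposition $u=v+w$ with $v$ the heat potential of the cut-off source and $w$ caloric is, after unwinding, the same split the paper produces from the Green identity~(\ref{green}): the paper's $-H_{0}$ is precisely your $v$ (the cut-off source supported in $B$ makes the $\mathbb{R}^{n}$-extension irrelevant) and $J_{0}=u+H_{0}=w$. Where you genuinely diverge is in how the two pieces are estimated. For the potential part you push the integration to all of $\mathbb{R}^{n}$ and exploit the exact cancellation $\int_{\mathbb{R}^{n}}D^{2}_{x}G(\cdot,\tau)\,\dx y=0$, subtracting $(\eta f)(x_{0},s)$ at each time slice; the paper instead keeps the integral on $B\times[t_{0},s]$, subtracts $\phi(y,s)f(y,s)$ at the spacetime evaluation point, and pays for the incomplete cancellation with the Gauss-formula boundary term $H_{2}$ (and later $H'_{2}$, $H'_{4}$). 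Both choices are classical and work with the same kernel bounds~(\ref{helpineq2}); your scheme avoids surface integrals but introduces the ``discrepancy'' $(\eta f)(x_{0},s)-(\eta f)(x_{1},s)$, which you correctly flag as the delicate point but leave as a remark — this is exactly the kind of term the paper spends the estimates $H'_{21},H'_{22}$ on, so you would still have to do that work. For the caloric remainder you cite interior derivative bounds for solutions of the heat equation (giving $|D^{2}w|,|D^{3}w|,|\partial_{t}D^{2}w|\lesssim d^{-k}|w|_{0}$); the paper re-derives the same bounds inline from the explicit Green integral $J_{0}=\int uL_{0}^{*}(\phi G)$ (steps~2 and~5), which is more self-contained but logically equivalent — your shortcut is legitimate provided the cited interior estimates are uniform up to the top face $t=t_{0}$, which they are. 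One point to fix: you say the key kernel bound ``is precisely the content of Appendix~\ref{apest}'', but Lemma~\ref{lemosz} there bounds $v_{s}$, not $D^{2}_{y}v$, and in the paper it is invoked only in the boundary Theorem~\ref{schesbrz} to control $u_{t}$ (and thence $\partial^{2}u/\partial y_{n}^{2}$ through the equation); the interior Theorem~\ref{lem1} does its kernel estimates by hand. Your own inline computations (substituting $|x-y|=(s-t)^{1/2}\rho$, resp.\ $z=d^{2}/(s-t)$, and using $\rho\le d/4$) would supply those estimates, so the plan is sound — just do not lean on Appendix~\ref{apest} as a black box for the interior case. The log-H\"older reconciliation of $\rho^{\alpha(P)}$, $\rho^{\alpha(Q)}$, $d^{\alpha(P)}$, $d^{\alpha(Q)}$ via~(\ref{rev1}) is handled correctly and matches the paper's use of $C_{1}d^{\alpha(Q)}\le d^{\alpha(P)}\le C_{2}d^{\alpha(Q)}$.
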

\begin{proof}
We divide the proof into a few steps. 

\textbf{1.}
For $\eta>0$ set $N_{\eta}= N(P,\eta\delta)$. Let $\phi\in C^3_c(N)$ be a cut-off function such that $\phi(Q)=0$ for $Q\in N\setminus N_{\xfrac{3}{4}}$, $\phi(Q)=1$ for $Q\in N_{\xfrac{1}{2}}$ and such that
\begin{align}\label{helpineq1}
|D^k_xD^h_t\phi (x,t)|\leq Dd^{-k-2h}\textrm{ for }(x,t)\in N.
\end{align}
Let $P=(x_0,t_0+d^2)$ and let us denote $B=\{x\in\mathbb{R}^n\colon |x-x_0|\leq d\}$. Then, we see $N=\overline{B}\times[t_0,t_0+d^2]$. We take $Q=(y,s) \in N$, such that $d(P,Q)\leq \frac{d}{4}$. We set $v(x,t)=\phi(x,t) G(x-y;t-s)$. From \cite{fried} we know the following equality  
\begin{align}\label{green}
vL_0u-uL_0^*v=\sum_{i=1}^nD_i(vu_{x_i}-uv_{x_i}) -(uv)_t
\end{align}
for sufficiently smooth functions $u$ and $v$, where $L_0^*$ is an adjoint operator to $L_0$\footnote{$L_0^*w=\Delta w+w_t$ for an arbitrary function $w$.}.

We integrate  above equality and we obtain
\begin{align*}
\int_{t_0}^s\int_B \phi(x,t) G(x-y;t-s) f(x,t)\dx x\dx t&-\int_{t_0}^s\int_Bu(x,t)L_0^*\left(\phi(x,t) G(x-y;t-s)\right)\dx x\dx t=\\
\int_{t_0}^s\int_B\sum_{i=1}^nD_i(vu_{x_i}-uv_{x_i}) -(uv)_t\dx x\dx t&=\int_{t_0}^s\int_{\partial B}vDu\cdot n-uDv\cdot n\dx S(x)\dx t-u(y,s).
\end{align*}
Therefore, the above equality yields
\begin{align*}
u(y,s)&=-\int_{t_0}^s\int_B \phi(x,t) G(x-y;t-s) f(x,t)\dx x\dx t+\int_{t_0}^s\int_Bu(x,t)L_0^*\left(\phi(x,t) G(x-y;t-s)\right)\dx x\dx t\\
&=-H_0+J_0.
\end{align*}

We set 
\begin{align*}
H(Q)=D^2_{y}H_0,\qquad J(Q)=D^2_y J_0,
\end{align*}
where $Q=(y,s)$. 
Thus, we see that we have
\begin{align*}
D^2u(Q)=-H(Q)+J(Q).
\end{align*}

\textbf{2.} We estimate $|D^2u(Q)|$. First, we will control $J(Q)$.
We denote $U=N_{3/4}\setminus N_{1/2}\cap\{t_0\leq t\leq s\}$ and we have
\begin{align*}
J=\int_{U}u(x,t)(\Delta+D_t)\left(\phi(x,t)D^2_yG(x-y,t-s)\right)\dx x\dx t.
\end{align*}
We know that $(\Delta+D_t)G=0$. Thus, the above equality yields
\begin{align}\label{2505in}
\begin{split}
&J=\\
&\int_{U}u(x,t)\left[\phi_t(x,t)D^2_y G(x-y, t-s)+D^2_x\phi(x,t)D^2_y G(x-y, t-s)+2D_x\phi(x,t)D^3_yD(x-y,t-s)\right]\dx x \dx t.
\end{split}
\end{align}
Next, we use inequalities (\ref{helpineq1}) and (\ref{helpineq2}) and we get
\begin{align}\label{helineqa}
|J|&\leq C |u|_{0,N}\sum_{i=0}^1d^{-2+i}\int_{U}(s-t)^{-(n+2+i)/2}\expo{-\frac{|x-y|^2}{5(s-t)}}\dx x \dx t.
\end{align}

Let us define the subset of $U$
$$
U_1=\left\{(x,t)\in U\colon |x-x_0|>\tfrac{d}{2}\right\}.
$$
We also set $U_2=U\setminus U_1$. 
For $(x,t)\in U_1$ we have 
\begin{align}\label{3105in1}
|x-y|\geq |x-x_0|-|x_0-y|>\tfrac{d}{2}-\tfrac{d}{4}=\tfrac{d}{4}.
\end{align}
We  decompose integrals from (\ref{helineqa}) as follows $\int_{U_1}\ldots +\int_{U_2}\ldots.$
First, we estimate $\int_{U_1}\ldots .$ 
 We use inequality (\ref{3105in1}) and then substitute $z=d^2/(s-t)$. It yields
\begin{multline}\label{in1}
\int_{U_1}(s-t)^{-(n+2+i)/2}\expo{-\frac{|x-y|^2}{5(s-t)}}\dx x \dx t\leq d^n\int_0^{s}(s-t)^{-(n+2+i)/2}\expo{-C\frac{d^2}{(s-t)}}\dx t\\
\leq d^n\int_0^{\infty}\frac{d^{-(n+2+i)}}{z^{-(n+2+i)/2}}\expo{-Cz}\frac{d^2}{z^2}\dx z=\int_0^{\infty}d^{-i}{z^{(n-2+i)/2}}\expo{-Cz}\dx z\leq C d^{-i}.
\end{multline}
It lefts to estimate $\int_{U_2}$.
If $(x,t)\in U_2$, then $(x,t)\notin N_{1/2}$. Hence, $|t-\tilde{t}_0| > (d/2)^2$, where $\tilde{t}_0=t_0+d^2$. Thus, we conclude
\begin{align*}
s-t=s-\tilde{t}_0+\tilde{t}_0-t > \left(\frac{d}{2}\right)^2-\left(\frac{d}{4}\right)^2>\left(\frac{d}{4}\right)^2.
\end{align*}
Thanks to this inequality we have 
\begin{align}\label{in2}
\int_{U_2}(s-t)^{-(n+2+i)/2}\expo{-\frac{|x-y|^2}{5(s-t)}}\dx x \dx t\leq C d^{-(n+2+i)}|\Omega_2|\leq\frac{C}{d^i}.
\end{align}
We put inequalities (\ref{in1}) and (\ref{in2}) to (\ref{helineqa}) and we get
\begin{align}\label{pomin1}
|J|\leq C|u|_{0, N}d^{-2}.
\end{align}

\textbf{3.} Next, we shall estimate $H$
\begin{align}\label{kini}
H=&{}\int_{t_0}^s\int_BD^2_yG(x-y,t-s)\left(\phi(x,t)f(x,t)-\phi(y,s)f(y,s)\right)\dx x\dx t\nonumber\\&+\phi(y,s)f(y,s)\int_{t_0}^s\int_BD^2_yG(x-y,t-s)\dx x\dx t=H_1+H_2.
\end{align}
First, we estimate the term from the integral $H_1$. For $(x,t)\in N$ we have
\begin{align}\label{inlem1}
\begin{split}
|\phi(x,t)f(x,t)&-\phi(y,s)f(y,s)|\leq|\phi(x,t)f(x,t)-\phi(x,t)f(y,s)|+|\phi(x,t)f(y,s)-\phi(y,s)f(y,s)|\\
&\leq C\left[[f]_{\alpha(Q),Q,N}\left(|x-y|^{\alpha(Q)}+|t-s|^{\alpha(Q)/2}\right)+|f|_{0,N}\left(\frac{|x-y|}{d}+\frac{|t-s|}{d^2}\right)\right],
\end{split}
\end{align}
where inequality (\ref{helpineq1}) was applied.
We put the above result into $H_1$ and then we use (\ref{helpineq2}). It yields
\begin{align}\label{intin}
|H_1|\leq {}&C [f]_{\alpha(Q),Q,N}\int_{t_0}^s\int_B(s-t)^{-(n+2)/2}\expo{-\frac{|x-y|^2}{5(s-t)}}\left(|x-y|^{\alpha(Q)}+|t-s|^{\alpha(Q)/2}\right)\dx x\dx t\nonumber
\\
&+C|f|_{0,N}\int_B(s-t)^{-(n+2)/2}\expo{-\frac{|x-y|^2}{5(s-t)}}\left(\frac{|x-y|}{d}+\frac{|t-s|}{d^2}\right)\dx x \dx t.
\end{align}
In the first integral we substitute $|x-y|=r(s-t)^{1/2}$ and then we estimate it as follows
\begin{align*}
\int_{t_0}^s\int_0^{\infty}&\frac{(s-t)^{\alpha(Q)/2}r^{\alpha(Q)}+(s-t)^{\alpha(Q)/2}}{(s-t)^{(n+2)/2}}\expo{-C r}r^{n-1}(s-t)^{n/2}\dx r\dx t\\
&\leq C\int_{t_0}^s\frac{\dx t}{(s-t)^{1-\alpha(Q)/2}}=C\frac{(s-t_0)^{\alpha(Q)/2}}{\alpha(Q)/2}\leq C\frac{(s-t_0)^{\alpha(Q)/2}}{\alpha^-}\leq C d^{\alpha(Q)},
\end{align*}
where in the last inequality we have used the fact that $s-t_0\leq d^2$. 
Now, we estimate the second integral in (\ref{intin}). We again substitute $|x-y|=r(s-t)^{1/2}$ and we obtain
\begin{align*}
C \int_{t_0}^s&\int_B(s-t)^{-(n+2)/2}\expo{-\frac{|x-y|^2}{5(s-t)}}\left(\frac{|x-y|}{d}+\frac{|t-s|}{d^2}\right)\dx x\dx t\\
 &\quad\leq C\int_{t_0}^s\int_0^{\infty}(s-t)^{-(n+2)/2}\expo{-C r}r^{n-1}(s-t)^{n/2}\left(\frac{|s-t|^{1/2}r}{d}+\frac{|t-s|}{d^2}\right)\dx r \dx t
 \\&\quad=C\int_{t_0}^s\int_0^{\infty}(s-t)^{-1/2}d^{-1}\expo{-C r}r^{n}\dx r \dx t+C\int_{t_0}^s\int_0^{\infty}d^{-2}\expo{-C r}r^{n-1}\dx r \dx t\\&\quad
\leq C\left(\int^s_{t_0}\frac{\dx t}{(s-t)^{1/2}d}+\frac{s-t_0}{d^2}\right)=C\left(\frac{2(s-t_0)^{1/2}}{d}+\frac{s-t_0}{d^2}\right)\leq C.
\end{align*}
Hence, we deduce 
\begin{align*}
|H_1|\leq C\left(|f|_{0,N}+d^{\alpha(Q)}[f]_{\alpha(Q),Q,N}\right).
\end{align*}

\textbf{4.} Next, we bound $H_2$. For this purpose we use the Gauss formula
\begin{align}\label{inlem1para}
|H_2|\leq{}& C |f|_{0,N}\int_{t_0}^s\int_{\partial B}|D_yG(x-y,t-s)|\dx S(x)\dx t\nonumber\\
\stackrel{(\ref{helpineq2})}{\leq}{}&C |f|_{0,N}\int_{t_0}^s\int_{\partial B}(s-t)^{-\frac{(n+1)}{2}}\expo{-\frac{|x-y|^2}{5(s-t)}}\dx S(x)\dx t.
\end{align}
Since $|y-x_0|\leq\frac{d}{4}$ and  $|x-x_0|=d$ for $x\in \partial B$, we have
\begin{align*}
|x-y|\leq |x-x_0|-|y-x_0|\leq \tfrac{3}{4}d.
\end{align*}
We put this inequality into (\ref{inlem1para}) and we get
\begin{align}\label{kinii}
|H_2|\leq{}& C |f|_{0,N}\int_{t_0}^s\int_{\partial B}(s-t)^{-{(n+1)}/{2}}\expo{-\frac{Cd^2}{(s-t)}}\dx S(x)\dx t\nonumber\\
={}&C |f|_{0,N}d^{n-1}\int_{t_0}^s(s-t)^{-{(n+1)}/{2}}\expo{-\frac{Cd^2}{(s-t)}}\dx t.
\end{align}
Next, we substitute $z={d^2}/({s-t})$
\begin{align*}
|H_2|\leq{}  C |f|_{0,N}d^{n-1}\int_{\hat{t}}^{\infty}\frac{z^{(n+1)/2}}{d^{n+1}}\frac{d^2}{z^2}\expo{-{Cz}}\dx z\leq C |f|_{0,N},
\end{align*}
where $\hat{t}=d^2/(s-t_0)\geq 1$. Finally, we conclude
\begin{align}\label{pomin2}
|H|\leq C\left(|f|_{0,N}+d^{\alpha(Q)}[f]_{\alpha(Q),Q,N}\right).
\end{align}
Now, we join together (\ref{pomin1}) and (\ref{pomin2}), what yields
\begin{align*}
|D^2_yu(y,s)|\leq C\left( |f|_{0,N}+d^{\alpha(Q)}[f]_{\alpha(Q),Q,N}+|u|_{0,N}d^{-2}\right).
\end{align*}

\textbf{5.} Next, we will estimate the H\"older semi-norm of $D^2_yu(y,s)$ 
\begin{align}\label{lem1in1pom}
\begin{split}
d^{\alpha(Q)}\frac{|D^2_yu(Q)-D^2_yu(P)|}{d^{\alpha(Q)}(Q,P)}&\leq d^{\alpha(Q)}\frac{|H(Q)-H(P)|}{d^{\alpha(Q)}(Q,P)}+d^{\alpha(Q)}\frac{|J(Q)-J(P)|}{d^{\alpha(Q)}(Q,P)}\\
&=H'+J',
\end{split}
\end{align}
where $H$ and $J$ are given as in the previous part of the proof. 
Let us introduce notations $P=(y_1, s_1)$ and $Q=(y_2,s_2)$. We estimate
\begin{align}\label{helllineq}
&|J'|\leq\frac{d^{\alpha(Q)}}{d^{\alpha(Q)}(Q,P)}\left|\int_{t_0}^{s_1}\int_{B}u(x,t)(\Delta+D_t)\left(\phi(x,t)D^2_yG(x-y_1,t-s_1)\right)\dx x\dx t\right.\nonumber\\&\qquad\qquad\qquad
-\left.\int_{t_0}^{s_2}\int_{B}u(x,t)(\Delta+D_t)\left(\phi(x,t)D^2_yG(x-y_2,t-s_2)\right)\dx x\dx t\right|
\leq \frac{d^{\alpha(Q)}}{d^{\alpha(Q)}(Q,P)} |u|_{0,N}\nonumber\\&\qquad\qquad\qquad\qquad
\cdot\left[\sum_{i=0}^1|D^{2-i}_x\phi|_{0,N}\int_{\widetilde{U}_1}\left|D^{2+i}_xG(x-y_1,t-s_1)-D^{2+i}_xG(x-y_2,t-s_2)\right|\dx x\dx t\right.\nonumber\\&\qquad\qquad\qquad
+|D_t\phi|_{0,N}\int_{\widehat{U}_1}|D^{2}_xG(x-y_1,t-s_1)-D^{2}_xG(x-y_2,t-s_2)|\dx x\dx t\nonumber\\&\qquad\qquad\qquad
\left.+\sum_{i=0}^1|D^{2-i}_x\phi|_{0,N}\int_{\widetilde{U}_2}|D^{2+i}_xG(x-y_1,t-s_1)|\dx x\dx t
+|D_t\phi|_{0,N}\int_{\widetilde{U}_2}|D^{2}_xG(x-y_1,t-s_1)|\dx x\dx t\right]\nonumber\\&\qquad
=\frac{d^{\alpha(Q)}}{d^{\alpha(Q)}(Q,P)}|u|_{0,N}\left[\sum_{i=0}^1|D^{2-i}_x\phi|_{0,N}J'_1(i)+|D_t\phi|_{0,N}J'_1(0)+\sum_{i=0}^1|D^{2-i}_x\phi|_{0,N}J'_2(i)+|D_t\phi|_{0,N}J'_2(0)\right],
\end{align}
where $\widetilde{U}_1=N_{1/2}\setminus N_{3/4}\cap \{t_0\leq t\leq s_2\}$ and $\widetilde{U}_2=N_{1/2}\setminus N_{3/4}\cap \{s_2\leq t\leq s_1\}$. We apply the Mean Value Theorem to estimate the first type of integral
\begin{align*}
J'_1(i)\leq\int_{\widetilde{U}_1}|D^{2+i+1}_x G(x-\bar{y}, t-\bar{s})||y_1-y_2|+|D^{2+i}_xD_tG(x-\bar{y},t-\bar{s})||s_1-s_2|\dx x\dx t,
\end{align*}
where point $(\bar{y},\bar{s})$ is on an interval connecting points $(y_1,s_1)$ with $(y_2,s_2)$.

Next, we use  inequality (\ref{helpineq2}) and we obtain
\begin{align}\label{hellineg}
J'_1(i)\leq C\int_{\widetilde{U}_1}C\expo{\frac{-|x-\bar{y}|^2}{5(\bar{s}-t)}}\left((\bar{s}-t)^{-(n+3+i)/2}|y_1-y_2|+(\bar{s}-t)^{-(n+4+i)/2}|s_1-s_2|\right)\dx x\dx t.
\end{align}
Further, we estimate this integral similarly as the integral in (\ref{helineqa}). Therefore, we decompose $\widetilde{U}_1$ into two sets. We define 
$$\widetilde{U}_{11}=\left\{(x,t)\in\widetilde{U}_1\colon |x-x_0|>\tfrac{d}{2}\right\}$$ 
and $$\widetilde{U}_{12}=\widetilde{U}_1\setminus\widetilde{U}_{11}.$$
The integral in (\ref{hellineg}) we divide into $\int_{\widetilde{U}_{11}}\ldots+\int_{\widetilde{U}_{12}}\ldots$.
For $(x,t)\in\widetilde{U}_{11}$ inequality $|x-\bar{y}|\geq\frac{d}{4}$ is satisfied. Thus, we estimate the first part of $J_1'(i)$ as follows
\begin{align*}
\int_{\widetilde{U}_1}C\expo{-\frac{|x-\bar{y}|^2}{5(\bar{s}-t)}}\left((\bar{s}-t)^{-(n+3+i)/2}|y_1-y_2|+(\bar{s}-t)^{-(n+4+i)/2}\right)|s_1-s_2|\dx x\dx t\leq \\ 
\int_{\widetilde{U}_1}C\expo{-\frac{Cd^2}{\bar{s}-t}}\left((\bar{s}-t)^{-(n+3+i)/2}|y_1-y_2|+(\bar{s}-t)^{-(n+4+i)/2}|s_1-s_2|\right)\dx x\dx t.
\end{align*}
In the last integral we substitute $z=d^2/(\bar{s}-t)$ and we obtain that it can be estimated as follows
\begin{align*}
C d^n\int_0^{\infty}&\expo{-C z}\frac{d^2}{z^2}\left(\frac{z^{(n+3+i)/2}}{d^{n+3+i}}|y_1-y_2|+\frac{z^{(n+4+i)/2}}{d^{n+4+i}}|s_1-s_2|\right)\dx z\\
&\leq C\left(\frac{|y_1-y_2|}{d^{1+i}}+\frac{|s_1-s_2|}{d^{2+i}}\right).
\end{align*}
Now, we estimate the integral on $\widetilde{U}_{12}$. There we use inequality $|\bar{s}-t|>({d}/{4})^2$, which is satisfied for $(x,t)\in\widetilde{U}_{12}$. 
Then, we bound $J'_1(i)$ on $\widetilde{U}_{12}$ in the following way 
\begin{align*}
&C(s_2-t_0)|\widetilde{U}_{12}|\left(d^{-(n+3+i)}|y_1-y_2|+d^{-(n+4+i)}|s_1-s_2|\right) \leq\\
 &\quad C d^{n+2}\left(d^{-(n+3+i)}|y_1-y_2|+d^{-(n+4+i)}|s_1-s_2|\right)=C\left(\frac{|y_1-y_2|}{d^{1+i}}+\frac{|s_1-s_2|}{d^{2+i}}\right).
\end{align*}
Finally, we obtain  
\begin{align*}
J_1'(i)\leq C\left(\frac{|y_1-y_2|}{d^{1+i}}+\frac{|s_1-s_2|}{d^{2+i}}\right).
\end{align*}
We see that $|y_1-y_2|\leq\frac{d}{4}$ and $|s_1-s_2|\leq\frac{d^2}{16}$. Thus, we estimate $J'_1(i)$ as follows
\begin{align}\label{helllineq1}
J_1'(i)\leq \frac{C}{d^i}\left(\frac{|y_1-y_2|^{\alpha(Q)}}{d^{\alpha(Q)}}+\frac{|s_1-s_2|^{\alpha(Q)/2}}{d^{\alpha(Q)/2}}\right) \leq \frac{C}{d^i} \frac{d^{\alpha(Q)}(P,Q)}{d^{\alpha(Q)}},
\end{align}
where we have also used the fact that $d(P,Q)\leq d$.

It is left to estimate $J_2'(i)$.
We have $|x-y_1|>d/2$ for $x\in\widetilde{U}_2$. Thus, by (\ref{helpineq2}) we obtain
\begin{align*}
J_2'(i)&\leq C\int^{s_1}_{s_2}\int_{\widetilde{U}_2}(s_1-t)^{-(n+2+i)/2}\expo{-\frac{|y_1-x|^2}{5(s_1-t)}}\dx x\dx t\\
&\leq Cd^n\int_{s_2}^{s_1}(s_1-t)^{-(n+2+i)/2}\expo{-\frac{Cd^2}{s_1-t}}\dx t.
\end{align*}
Then, we substitute $z=d^2/(s_1-t)$
\begin{align*}
J_2'(i)\leq C d^n\int_{\hat{t}_0}^{\infty}\frac{z^{(n+2+i)/2}}{d^{n+2+i}}\frac{d^2}{z^2}\expo{-C z}\dx z\leq \frac{C}{d^i\hat{t}_0^{\mu}},
\end{align*}
where $\hat{t}_0={d^2}/({s_1-s_2})$ and $\mu>0$ is arbitrary constant. Let us take $\mu=\alpha(Q)/2$. We have that $(s_1-s_2)\leq d^2(P,Q)$, so we get
\begin{align}\label{helllineq2}
J_2'(i)\leq\frac{C}{d^i} \frac{d^{\alpha(Q)}(P,Q)}{d^{\alpha(Q)}}.
\end{align}
Finally, we put (\ref{helllineq1}) and (\ref{helllineq2}) into (\ref{helllineq}) and also use (\ref{helpineq1}) and we get
\begin{align}\label{konj}
J'\leq C |u|_{0,N}d^{-2}.
\end{align}

\textbf{6.} The term $H'$ we  estimate as follows
\begin{align}\label{kon}
H'\leq H'_1+H'_2+H'_3+H'_4,
\end{align}
where 
\begin{align*}
H'_1=\frac{d^{\alpha(Q)}}{d^{\alpha(Q)}(P,Q)}&\bigg|\int_{t_0}^{s_2}\int_{B}D^{2}_x(x-y_1,t-s_1)\left(\phi(x,t)f(x,t)-\phi(y_1,s_1)f(s_1,y_1)\right)\\
&-D^2_xG(x-y_2,t-s_2)\left(\phi(x,t)f(x,t)-\phi(y_2,s_2)f(y_2,s_2)\right)\dx x\dx t\bigg|,\\
H'_2=\frac{d^{\alpha(Q)}}{d^{\alpha(Q)}(P,Q)}&\Bigl|\phi(y_1,s_1)f(y_1,s_1)\int_{t_0}^{s_2}\int_BD_x^2G(x-y_1, t-s_1)\dx x\dx t\\
&-\phi(y_2,s_2)f(y_2,s_2)\int_{t_0}^{s_2}\int_BD^2_xG(x-y_2,t-s_2)dx\,dt\Bigr|,\\
H'_3=\frac{d^{\alpha(Q)}}{d^{\alpha(Q)}(P,Q)}&\Bigl|\int_{s_2}^{s_1}\int_BD^2_xG(x-y_1,t-s_1)\left(\phi(x,t)f(x,t)-\phi(y_1,s_1)f(y_1,s_1)\right)\dx x\dx t\Bigr|,\\
H'_4=\frac{d^{\alpha(Q)}}{d^{\alpha(Q)}(P,Q)}&\Bigl|\phi(y_1,s_1)f(y_1,s_1)\int_{s_2}^{s_1}\int_BD^2_xG(x-y_1,t-s_1)\dx x\dx t\Bigr|.
\end{align*}
First, we will bound $H'_3$. We use inequalities (\ref{helpineq1}) and (\ref{inlem1}) and we get
\begin{align*}
H'_3&\leq C \frac{d^{\alpha(Q)}}{d^{\alpha(Q)}(P,Q)}\int_{s_2}^{s_1}\int_B (s_1-t)^{-(n+2)/2}\expo{-\frac{|x-y_1|^2}{5(s_1-t)}}\cdot\\
&\quad\qquad\left[[f]_{\alpha(P),P,N}\left(|x-y_1|^{\alpha(P)}+|t-s_1|^{\alpha(P)/2}\right)+|f|_{0,N}\left(\frac{|x-y_1|}{d}+\frac{|t-s_1|}{d^2}\right)\right]\dx x\dx t.
\end{align*}
We estimate the expression in the similar way as in (\ref{intin}). Thus, we obtain
\begin{align}\label{3106in}
H'_3&\leq C\frac{d^{\alpha(Q)}}{d^{\alpha(Q)}(P,Q)}\left([f]_{\alpha(P),P,N}(s_1-s_2)^{(\alpha(P)/2)} +|f|_{0,N}\left(\frac{2(s_1-s_2)^{1/2}}{d}+\frac{s_1-s_2}{d^2} \right)\right)\nonumber\\
&\leq C\frac{d^{\alpha(Q)}}{d^{\alpha(Q)}(P,Q)}\left([f]_{\alpha(P),P,N}d^{\alpha(P)}(P,Q) +|f|_{0,N}\frac{d^{\alpha(Q)}(P,Q)}{d^{\alpha(Q)}}\right)\nonumber\\
&\leq C\left([f]_{\alpha(P),P,N}d^{\alpha(Q)}+|f|_{0,N}\right),
\end{align}
where we have used the inequality $(s_1-s_1)^{(1/2)}\leq d(P,Q)$ and the fact that $\alpha$ is log-H\"older continuous.

It can be shown that there exist $C_1,C_2$ such that
\begin{align*}
C_1 d^{\alpha(Q)}\leq d^{\alpha(P)}\leq C_2d^{\alpha(Q)},
\end{align*}
where $C_1,\ C_2$ do not depend on $P$ nor $Q$ (for details see the end of the proof of Lemma 3.1 in \cite{bies}).
Thus, from~(\ref{3106in}) we get 
\begin{align}\label{kon1}
H'_3\leq  C\left([f]_{\alpha(P),P,N}d^{\alpha(P)}+|f|_{0,N}\right).
\end{align}
Next, we shall estimate $H'_4$. There we use the Gauss formula
\begin{align*}
H'_4&\leq C\frac{d^{\alpha(Q)}}{d^{\alpha(Q)}(P,Q)}|f|_{0,N}\int_{s_2}^{s_1}\int_{\partial B}\left|D_xG(x-y_1,t-s_1)\right|\dx S\,\dx t\\
&\stackrel{(\ref{helpineq2})}{\leq}C\frac{d^{\alpha(Q)}}{d^{\alpha(Q)}(P,Q)}|f|_{0,N}\int_{s_2}^{s_1}\int_{\partial B}(s_1-t)^{-(n+1)/2}\expo{-\frac{|x-y_1|^2}{5(s_1-t)}}\dx S\dx t.
\end{align*}
Then, we substitute $z=d^2/(s_1-t)$ into the last integral. We use the inequality $|x-y_1|\geq d$ which  is true for $x\in\partial B$
\begin{align*}
H'_4\leq{}&C\frac{d^{\alpha(Q)}}{d^{\alpha(Q)}(P,Q)}|f|_{0,N}d^{n-1}\int^{\infty}_{\hat{t}_0}\frac{z^{(n+1)/2}}{d^{n+1}}\expo{-Cz}\frac{d^2}{z^2}\dx z\\
={}&C\frac{d^{\alpha(Q)}}{d^{\alpha(Q)}(P,Q)}|f|_{0,N}\int^{\infty}_{\hat{t}_0}z^{(n-3)/2}\expo{-Cz}\dx z,
\end{align*}
where $\hat{t}_0={d^2}/({s_1-s_2})\geq 1$. Finally, we have 
\begin{align}\label{kon2}
H'_4\leq&C\frac{d^{\alpha(Q)}}{d^{\alpha(Q)}(P,Q)}|f|_{0,N}\hat{t}_0^{\alpha(Q)/2}\leq C |f|_{0,N}.
\end{align}

Now, we shall estimate $H'_2$. We see that $\phi(y_1,s_1)=\phi(y_2,s_2)=1$, so it yields
\begin{align}\label{in14031}
H'_2\leq {}&\frac{d^{\alpha(Q)}}{d^{\alpha(Q)}(P,Q)}|f|_{0,N}\Bigl|\int_{t_0}^{s_2}\int_BD_x^2G(x-y_1, t-s_1)-D_x^2G(x-y_2, t-s_2)\dx x\dx t\Bigr|\nonumber\\
&+\frac{d^{\alpha(Q)}}{d^{\alpha(Q)}(P,Q)}[f]_{\alpha(P),P,N}d^{\alpha(P)}(P,Q)\Bigl|\int_{t_0}^{s_2}\int_BD^2_x(x-y_1,t-s_1)\dx x \dx t\Bigr|=H'_{21}+H'_{22}.
\end{align}
The term $H'_{22}$ we estimate similarly as the term $H_2$ in   inequality (\ref{kini}) (see (\ref{kinii})). Thus, we obtain
\begin{align}\label{in14032}
H'_{22}\leq Cd^{\alpha(Q)}[f]_{\alpha(P),P,N}\leq Cd^{\alpha(P)}[f]_{\alpha(P),P,N}.
\end{align}
Next, we will estimate the term $H'_{21}$. We use the Gauss formula and the Mean Value Theorem. It yields
\begin{align*}
H'_{21}\leq {}&\frac{d^{\alpha(Q)}}{d^{\alpha(Q)}(P,Q)}|f|_{0,N}\int_{t_0}^{s_2}\int_{\partial B}|D_x^2G(x-\tilde{y}, t-\tilde{s})||y_1-y_2|+|D_xD_tG(x-\tilde{y}, t-\tilde{s})||s_1-s_2|\dx x\dx t,
\end{align*}
where $(\tilde{y},\tilde{s})$ is a point on the segment that joins $P$ with $Q$. We use (\ref{helpineq2}) to the above integral
\begin{multline*}
H'_{21}\leq C\frac{d^{\alpha(Q)}}{d^{\alpha(Q)}(P,Q)}|f|_{0,N}\int_{t_0}^{s_2}\int_{\partial B}\expo{-\frac{|x-\tilde{y}|^2}{5(\tilde{s}-t)}}
\\\cdot\left((\tilde{s}-t)^{-(n+2)/2}|y_1-y_2|+(\tilde{s}-t)^{-(n+3)/2}|s_1-s_2|\right)\dx x\dx t.
\end{multline*}
For $x\in\partial B$ we have  the following inequality
\begin{align*}
|x-\tilde{y}|\geq |x-y_1|-|y_1-\tilde{y}|\geq\frac{3}{4}d.
\end{align*}
It yields
\begin{align*}
H'_{21}\leq C\frac{d^{\alpha(Q)}}{d^{\alpha(Q)}(P,Q)}|f|_{0,N}d^{n-1}\int_{t_0}^{s_2}&\expo{-C\frac{d^2}{\tilde{s}-t}}\\&\cdot\left((\tilde{s}-t)^{-(n+2)/2}|y_1-y_2|+(\tilde{s}-t)^{-(n+3)/2}|s_1-s_2|\right)\dx t.
\end{align*}
We substitute $z={d^2}/({\tilde{s}-t})$
\begin{align}\label{in14033}
H'_{21}\leq {}&C\frac{d^{\alpha(Q)}}{d^{\alpha(Q)}(P,Q)}|f|_{0,N}d^{n-1}\int_{\tilde{t}}^{\infty}\expo{-C z}\left(\frac{z^{(n+2)/2}|y_1-y_2|}{d^{n+2}}+\frac{z^{(n+3)/2}|s_1-s_2|}{d^{n+3}}\right)\frac{d^2}{z^2}\dx z\nonumber\\
={}&C\frac{d^{\alpha(Q)}}{d^{\alpha(Q)}(P,Q)}|f|_{0,N}\int_{\tilde{t}}^{\infty}\expo{-C z}\left(\frac{z^{(n-2)/2}|y_1-y_2|}{d}+\frac{z^{(n-1)/2}|s_1-s_2|}{d^2}\right)\dx z\nonumber\\
\leq{}&C\frac{d^{\alpha(Q)}}{d^{\alpha(Q)}(P,Q)}|f|_{0,N}\left(\frac{d(P,Q)}{d}+\frac{d^2(P,Q)}{d^2}\right)\leq |f|_{0,N},
\end{align}
where $\tilde{t}=d^2/(s_2-t_0)$. 
Now, we put inequalities (\ref{in14032}) and (\ref{in14033}) into (\ref{in14031}) and we get
\begin{align}\label{kon3}
H'_2\leq C\left(d^{\alpha(P)}[f]_{\alpha(P),P,N}+|f|_{0,N}\right).
\end{align}

\textbf{7.} It is left to estimate $H'_1$.
We split $H'_1$ into four terms
\begin{align*}
H'_1\leq C\frac{d^{\alpha(Q)}}{d^{\alpha(Q)}(P,Q)}\left( H'_{11}+H'_{12}+H'_{13}+H'_{14}\right),
\end{align*}
where
\begin{align*}
H'_{11}&=\left|\int_{t_0}^{s_2-\eta}\int_BD^2_xG(x-y_1,t-s_1)-D^2_xG(x-y_2,t-s_2)\right.\\
&\qquad\cdot\left.\left[[f]_{\alpha(P),P,N}\left(|x-y_1|^{\alpha(P)}+|t-s_1|^{\alpha(P)/2}\right)+|f|_{0,N}\left(\frac{|x-y_1|}{d}+\frac{|t-s_1|}{d^2}\right)\right]\dx x\dx y\right|,\\
H'_{12}&=\int_{t_0}^{s_2-\eta}\left|\int_BD^2_xG(x-y_2,t-s_2)\right|\dx x\dx y\\
&\qquad\cdot\left[[f]_{\alpha(P),P,N}\left(|y_2-y_1|^{\alpha(P)}+|s_2-s_1|^{\alpha(P)/2}\right)+|f|_{0,N}\left(\frac{|y_2-y_1|}{d}+\frac{|s_2-s_1|}{d^2}\right)\right],\\
H'_{13}&=\int^{s_2}_{s_2-\eta}\int_B\left|D^2_xG(x-y_1,t-s_1)\right|\\&\qquad\cdot\left[[f]_{\alpha(P),P,N}\left(|x-y_1|^{\alpha(P)}+|t-s_1|^{\alpha(P)/2}\right)+|f|_{0,N}\left(\frac{|x-y_1|}{d}+\frac{|t-s_1|}{d^2}\right)\right]\dx x\dx y,\\
H'_{14}&=\int^{s_2}_{s_2-\eta}\int_B\left|D^2_xG(x-y_2,t-s_2)\right|\\&\qquad\cdot\left[[f]_{\alpha(Q),Q,N}\left(|x-y_2|^{\alpha(Q)}+|t-s_2|^{\alpha(Q)/2}\right)+|f|_{0,N}\left(\frac{|x-y_2|}{d}+\frac{|t-s_2|}{d^2}\right)\right]\dx x\dx y
\end{align*}
and $\eta={d^2(P,Q)}/{4}$.

We use inequality (\ref{helpineq2}) and substitute $|x-y_1|=(s_1-t)^{1/2}\rho$ into the integral $H'_{13}$. The integral with $[f]_{\alpha(P),P,N}$ we estimate in the subsequent way
\begin{align*}
 C \int_{s_2-\eta}^{s_2}&\int_0^{\infty}\expo{-\frac{\rho^2}{5}}\rho^{n-1}\frac{\left((s_1-t)^{\alpha(P)/2}\rho^{\alpha{(P)}}+(s_1-t)^{\alpha(P)/2}\right)}{s_1-t}\dx \rho\dx t \\
&\leq C\left( \int_{s_2-\eta}^{s_2}\frac{(s_1-t)^{\alpha(P)/2}}{s_1-t}\dx t\right)\\
&\leq C\frac{2}{\alpha(P)}((s_1-s_2)^{\alpha(P)/2}-(s_1-s_2+\eta)^{\alpha(P)/2})\leq C\eta^{\alpha(P)/2}.
\end{align*}
The term of $|f|_{0,N}$ we estimate as follows
\begin{align*}
\int_{s_2-\eta}^{s_2}\int_0^{\infty}\expo{-\frac{\rho^2}{5}}\rho^{n-1}\left(\frac{|t-s_1|^{-1/2}}{d}+\frac{1}{d^2}\right)\dx \rho \dx t\leq C\left(\frac{\eta^{1/2}}{d}+\frac{\eta}{d^2}\right).
\end{align*}
Thus, finally we have
\begin{align*}
H'_{13}\leq C\left(\eta^{\alpha(P)/2}[f]_{\alpha(P),P,N}+|f|_{0,N}\left(\frac{\eta^{1/2}}{d}+\frac{\eta}{d^2}\right)\right).
\end{align*}

In similar way we estimate $H'_{14}$ and we have
\begin{align*}
H'_{14}\leq C\left(\eta^{\alpha(P)/2}[f]_{\alpha(P),P,N}+|f|_{0,N}\left(\frac{\eta^{1/2}}{d}+\frac{\eta}{d^2}\right)\right).
\end{align*}
Then, we shall bound the integral $H'_{12}$. We use the Gauss formula and proceed in similar way as in   inequality~(\ref{inlem1para}):
\begin{align*}
H'_{12}\leq C\left( [f]_{\alpha(P),P,N}d^{\alpha(P)}(P,Q)+|f|_{0,N}\left(\frac{d(P,Q)}{d}+\frac{d^2(P,Q)}{d^2}\right)\right).
\end{align*}

It remains to estimate the expression  $H'_{11}$. First of all, we consider the  case  $s_1=s_2$. We estimate the coefficient of $[f]_{\alpha(P),P,N}$
\begin{align*}
\int_{t_0}^{s_2-\eta}&\int_B\int_0^1\left|D^3_xG(x-y_1+z(y_1-y_2), t-s_1)\right||y_1-y_2|\left(|x-y_1|^{\alpha(P)}+|t-s_1|^{\alpha(P)/2}\right)\dx z \dx x\dx t\\&
\leq \int_0^1\int_{t_0}^{s_2-\eta}\int_B\left|D^3_xG(x-y_1+z(y_1-y_2), t-s_1)\right||y_1-y_2|\\
&\qquad\qquad\cdot\left(|x-y_1+z(y_1-y_2)|^{\alpha(P)}+|t-s_1|^{\alpha(P)/2}+|y_1-y_2|^{\alpha(P)}\right)\dx x\dx t\dx z.
\end{align*}
Now, we substitute $|x-y_1+z(y_1-y_2)|=(s_1-t)^{1/2}\rho$ and we deduce
\begin{align*}
C\int_{t_0}^{s_2-\eta}&(s_1-t)^{-3/2}|y_1-y_2|\left(|t-s_1|^{\alpha(P)/2}+|y_1-y_2|^{\alpha(P)}\right)\dx t\\&
\leq C\left(|y_1-y_2|\left(\eta^{(\alpha(P)-1)/2}-(s_1-t_0)^{(\alpha(P)-1)/2})\right)+|y_1-y_2|^{\alpha(P)+1}\left(\eta^{-1/2}-(s_1-t_0)^{-1/2}\right)\right)\\
&\leq C\eta^{\alpha(P)/2}.
\end{align*}
Next, we bound the term multiplied by $|f|_{0,N}$
\begin{align*}
\int_{t_0}^{s_2-\eta}&\int_B\int_0^1\left|D^3_xG(x-y_1+z(y_1-y_2), t-s_1)\right||y_1-y_2|\left(\frac{|x-y_1|}{d}+\frac{|t-s_1|}{d^2}\right)\dx z \dx x\dx t\\&
\leq \int_{t_0}^{s_2-\eta}\int_B\int_0^1\left|D^3_xG(x-y_1+z(y_1-y_2), t-s_1)\right||y_1-y_2|\\
&\qquad\cdot\left(\frac{|x-y_1+z(y_1-y_2)|+|y_1-y_2|}{d}+\frac{|t-s_1|}{d^2}\right)\dx z \dx x\dx t\\&
\leq C\int_{t_0}^{s_2-\eta}(s_1-t)^{-3/2}|y_1-y_2|
`\left(\frac{|t-s_1|^{1/2}+|y_1-y_2|}{d}+\frac{|t-s_1|}{d^2}\right)\dx t
\\&\leq C \left(\frac{\eta}{d}+\frac{\eta^{1/2}}{d^2}\right).
\end{align*}
Let us consider the case $y_1=y_2$. First, we shall estimate the coefficient of $[f]_{\alpha(P),P,N}$
\begin{align*}
\int_{t_0}^{s_2-\eta}\int_B\int_{s_2}^{s_1}&|D^2xD_tG(x-y_1,t-z)|\left(|x-y_1|^{\alpha(P)}+|t-z|^{\alpha(P)/2}+|z-s_1|^{\alpha(P)/2}\right)\dx z \dx x\dx t\\&
\leq C\int_{t_0}^{s_2-\eta}\int_{s_2}^{s_1}(z-t)^{-2}\left((z-t)^{\alpha(P)/2}+(s_1-z)^{\alpha(P)/2}\right)\dx z\dx t\\&
\leq C\eta^{\alpha(P)/2}.
\end{align*}
Finally, we consider the coefficient of $|f|_{0,N}$
\begin{align*}
\int_{t_0}^{s_2-\eta}&\int_B\int_{s_2}^{s_1}|D^2xD_tG(x-y_1,t-z)|\left(\frac{|x-y_1|}{d}+\frac{|t-z|+|z-s_1|}{d^2}\right)\dx z\dx t\\&
\leq C\int_{t_0}^{s_2-\eta}\int_{s_2}^{s_1}(z-t)^{-2}\left(\frac{(z-t)^{1/2}}{d}+\frac{(z-t)+(s_1-z)}{d^2}\right)\dx z \dx t\leq C\left(\frac{\eta^{1/2}}{d}+\frac{\eta}{d^2}\right).
\end{align*}

In general case we decompose $H'_{11}$ as follows
\begin{align*}
H'_{11}\leq&\left|\int_{t_0}^{s_2-\eta}\int_BD^2_xG(x-y_1,t-s_1)-D^2_xG(x-y_1,t-s_2)\right.\\
&\qquad\left.\cdot\left[[f]_{\alpha(P),P,N}\left(|x-y_1|^{\alpha(P)}+|t-s_1|^{\alpha(P)/2}\right)+|f|_{0,N}\left(\frac{|x-y_1|}{d}+\frac{|t-s_1|}{d^2}\right)\right]\dx x\dx y\right|
\\&+\left|\int_{t_0}^{s_2-\eta}\int_BD^2_xG(x-y_1,t-s_2)-D^2_xG(x-y_2,t-s_2)\right.\\
&\qquad\left.\cdot\left[[f]_{\alpha(P),P,N}\left(|x-y_2|^{\alpha(P)}+|t-s_2|^{\alpha(P)/2}\right)+|f|_{0,N}\left(\frac{|x-y_2|}{d}+\frac{|t-s_2|}{d^2}\right)\right]\dx x\dx y\right|\\
&+
\left|\int_{t_0}^{s_2-\eta}\int_BD^2_xG(x-y_1,t-s_2)-D^2_xG(x-y_2,t-s_2)\dx x\dx y\right.\\
&\qquad\left.\cdot\left[[f]_{\alpha(P),P,N}\left(|y_2-y_1|^{\alpha(P)}+|s_2-s_1|^{\alpha(P)/2}\right)+|f|_{0,N}\left(\frac{|y_2-y_1|}{d}+\frac{|s_2-s_1|}{d^2}\right)\right]\right|.
\end{align*}
Two first integrals on the right-hand side we estimate as was shown above. The last one we bound as follows
\begin{align*}
\int_{t_0}^{s_2-\eta}\int_B\int^{1}_{0}\left|D^3_xDG\left(x-y_1+z(y_1-y_2),t-s_2\right)\right||y_1-y_2|\dx z\dx x\dx t.
\end{align*}
We use inequality (\ref{helpineq2}) and we substitute $|x-y_1+z(y_1-y_2)|=\rho(s_2-t)^{1/2}$. It yields that the considered integral is bounded by
\begin{align*}
C\int_{t_0}^{s_2-\eta}\int^{1}_{0}(s_2-t)^{-3/2}\dx z\dx t|y_1-y_2|=\frac{C}{2}(y_1-y_2)\left(\eta^{-1/2}-(s_2-t_0)^{-1/2}\right)\leq C(y_1-y_2)\eta^{-1/2}\leq C.
\end{align*}
Finally, we have shown that
\begin{align}\label{kon4}
H'_1\leq C\left(|f|_{0,N}+d^{\alpha(P)}[f]_{\alpha(P),P,N}+d^{\alpha(Q)}[f]_{\alpha(Q),Q,N}\right).
\end{align}
Hence, we put (\ref{kon1}), (\ref{kon2}), (\ref{kon3}) and (\ref{kon4}) into (\ref{kon}) and then (\ref{konj}) and (\ref{kon}) into (\ref{lem1in1pom}). It yields
\begin{align*}
d^{\alpha(Q)}\frac{|D^2_yu(Q)-D^2_yu(P)|}{d^{\alpha(Q)}(Q,P)}\leq C\left(|f|_{0,N}+d^{\alpha(P)}[f]_{\alpha(P),P,N}+d^{\alpha(Q)}[f]_{\alpha(Q),Q,N}+|u|_{0,N}d^{-2}\right).
\end{align*}

\end{proof}

The next theorem will be analogous to the previous one, but we shall consider a general parabolic operator here.
\begin{theo}\label{lem2}
Let $\Omega\subset\R^n$ be an open and bounded set and $T>0$. Let  $N(P,d)\subset\ot$ and let $A=(a_{ij})_{ij}\in\R^{n\times n}$ be a matrix such that inequalities
$$
\lambda|\zeta|^2\leq \sum_{i,j=1}^na_{ij}\zeta_i\zeta_j\leq \Lambda|\zeta|^2\textrm{ for all }\zeta\in\R^n
$$
are satisfied for certain $\lambda>0$ and $\Lambda>0$.

Let $u\in \chp{2}{1}(\overline{N})$ be a solution of the equation  
$$
\sum_{i,j=1}^na_{ij}u_{x_ix_j}-u_t=f,
$$
where $f\in\chpbt(\overline{N})$. Then
\begin{align*}
|D^2u(Q)|&\leq C\left( |f|_{0,N}+d^{\alpha(Q)}[f]_{\alpha(Q),Q,N}+|u|_{0,N}d^{-2}\right),\\
d^{\alpha(Q)}\frac{|D^2_yu(Q)-D^2_yu(P)|}{d^{\alpha(Q)}(Q,P)}&\leq C\left(|f|_{0,N}+d^{\alpha(P)}[f]_{\alpha(P),P,N}+d^{\alpha(Q)}[f]_{\alpha(Q),Q,N}+|u|_{0,N}d^{-2}\right)
\end{align*}
hold for any $Q$ such that $d(P,Q)\leq \tfrac{d}{4}$ and $Q\in N$, where
  $C=C\left(\textnormal{diam}\left(\ot\right),n, \alpha^+,\alpha^-,c_{\log}(\alpha),\Lambda,\lambda \right)$.
\end{theo}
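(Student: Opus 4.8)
The plan is to straighten the principal part by a linear change of space variables and then to appeal to Theorem~\ref{lem1}. Since $u_{x_ix_j}=u_{x_jx_i}$, we may replace $A$ by its symmetric part without altering the equation, so assume $A=A^{T}$; then $A$ is positive definite with spectrum in $[\lambda,\Lambda]$. Put $B=A^{1/2}$ (symmetric positive definite, spectrum in $[\sqrt\lambda,\sqrt\Lambda]$), $\Phi(z,t)=(Bz,t)$, and set $\tilde u=u\circ\Phi$, $\tilde f=f\circ\Phi$, $\tilde\alpha=\alpha\circ\Phi$. Because $\sum_i B_{ki}B_{li}=(B^{2})_{kl}=a_{kl}$, a short computation gives $\Delta_z\tilde u-\tilde u_t=\tilde f$ on $\tilde N:=\Phi^{-1}(N)$. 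The map $\Phi$ is a fixed linear isomorphism with $\sqrt\lambda\,|\zeta|_2\le|B\zeta|_2\le\sqrt\Lambda\,|\zeta|_2$, hence bi-Lipschitz for the parabolic metric $d$ with constants depending only on $n,\lambda,\Lambda$. Consequently $\tilde u$ and $\tilde f$ have, on $\overline{\tilde N}$, the same kind of regularity as $u$ and $f$ but with the transformed exponent $\tilde\alpha$; moreover $\tilde\alpha^{\pm}=\alpha^{\pm}$, $\tilde\alpha$ is again log-H\"older with $c_{\log}(\tilde\alpha)\le c_{\log}(\alpha)+C(n,\lambda,\Lambda)$, the Hessian transforms by $D^2_z\tilde u=B\,(D^2_xu\circ\Phi)\,B$ (so $|D^2_xu|$ and $|D^2_z\tilde u|$ are comparable with constants depending on $\lambda,\Lambda$), and $\diam{\Phi^{-1}(\ot)}$ is comparable to $\diam{\ot}$. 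In particular, the constant that Theorem~\ref{lem1} produces for $\tilde u$ has exactly the dependence asserted in Theorem~\ref{lem2}.

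The only geometric input is this: there is $c_1=c_1(n,\Lambda)\in(0,1]$ such that $B$ carries the cube $\{|z|_\infty\le\tfrac{3c_1}{4}d\}$ into $\{|x|_\infty\le\tfrac34 d\}$; since $\Phi$ fixes the time variable, this yields $N(\tilde R,\tfrac{3c_1}{4}d)\subset\Phi^{-1}\bigl(N(R,\tfrac34 d)\bigr)$ for every $R$, where $\tilde R=\Phi^{-1}(R)$. If in addition $d(P,R)\le d/4$ and $t_R\le t_P$, the triangle inequality gives $N(R,\tfrac34 d)\subset N(P,d)=N$, so $N(\tilde R,\tfrac{3c_1}{4}d)\subset\tilde N$ and $\tilde u$ solves the heat equation there. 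The first estimate of Theorem~\ref{lem2} now follows: for $Q$ with $d(P,Q)\le d/4$, apply Theorem~\ref{lem1} to $\tilde u$ on the semicube $N(\tilde Q,\tfrac{3c_1}{4}d)$ (whose top is $\tilde Q$) to bound $|D^2_z\tilde u(\tilde Q)|$, and translate back, absorbing into the constant the finitely many conversion factors coming from $B$, from $(\tfrac{3c_1}{4})^{\tilde\alpha}\in[c,C]$, and from the bi-Lipschitz distortion under $\Phi$ of $|\cdot|_{0,\cdot}$ and of the pointwise seminorms $[\cdot]_{\alpha(Q),Q,\cdot}$ (using $\tilde\alpha(\tilde Q)=\alpha(Q)$). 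The same argument with $\tilde Q$ replaced by $\tilde P$ and radius $\tfrac34 d$ gives the matching bound for $|D^2u(P)|$.

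For the H\"older difference quotient I split into two regimes according to $D:=d(\tilde P,\tilde Q)$, noting $D\le\kappa\,d(P,Q)\le\kappa d/4$ with $\kappa=\max(1,\sqrt{n/\lambda})$. If $D\le\tfrac{3c_1}{16}d$, then $\tilde Q$ lies in the $\tfrac14$-neighbourhood of the top $\tilde P$ of the semicube $N(\tilde P,\tfrac{3c_1}{4}d)\subset\tilde N$, so Theorem~\ref{lem1} directly controls $d^{\tilde\alpha(\tilde Q)}\,|D^2_z\tilde u(\tilde Q)-D^2_z\tilde u(\tilde P)|/d^{\tilde\alpha(\tilde Q)}(\tilde Q,\tilde P)$; translating back — using $\tilde\alpha(\tilde Q)=\alpha(Q)$, the bi-Lipschitz comparison of $d(\cdot,\cdot)$, and the comparability $d^{\alpha(P)}\approx d^{\alpha(Q)}$ already established in the proof of Theorem~\ref{lem1} — gives exactly the claimed bound. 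If instead $D>\tfrac{3c_1}{16}d$, then bi-Lipschitzness forces $d(P,Q)\ge D/\kappa>c_6 d$ for some $c_6=c_6(n,\lambda,\Lambda)>0$, whence $d^{\alpha(Q)}/d^{\alpha(Q)}(P,Q)\le C$; one then simply writes $|D^2u(Q)-D^2u(P)|\le|D^2u(Q)|+|D^2u(P)|$ and invokes the first estimate at both points.

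I expect the geometric mismatch to be the only real obstacle: straightening the operator turns the semicube $N(P,d)$ into a parallelepiped whose eccentricity is governed by $\Lambda/\lambda$ and $n$, so no single semicube of size comparable to $d$ simultaneously sits inside $\tilde N$ and reaches every $Q$ with $d(P,Q)\le d/4$. This is what forces the near/far dichotomy above (the far case being harmless precisely because there the difference quotient is already controlled by the pointwise bounds), and with it comes the routine bookkeeping of checking that the change of variables alters $\tilde\alpha$, $c_{\log}(\tilde\alpha)$, the radii, and all the relevant norms and seminorms only by factors depending on $n,\lambda,\Lambda$.
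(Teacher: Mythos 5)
Your proof is correct, but it takes a genuinely different route from the paper's. The paper disposes of Theorem~\ref{lem2} in one line: it repeats the potential-theoretic argument of Theorem~\ref{lem1} verbatim, replacing the Gaussian $G$ by the fundamental solution
$\left(\det(a_{ij})\right)^{1/2}(2\sqrt\pi)^{-n}(s-t)^{-n/2}\exp\left[-\sum a_{ij}(x_i-y_i)(x_j-y_j)/(4(s-t))\right]$
of the constant-coefficient operator itself. You instead straighten the operator by the linear change of variables $x=Bz$ with $B=A^{1/2}$ and invoke Theorem~\ref{lem1} as a black box on the pulled-back heat equation. What the paper's method buys is that the semicube geometry is untouched, so nothing needs to be re-checked; the cost is re-deriving (or asserting one has re-derived) all of steps 1--7 of the proof of Theorem~\ref{lem1} with the anisotropic kernel, checking again that the relevant kernel estimates of type \eqref{helpineq2} still hold with constants depending only on $\lambda,\Lambda,n$. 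Your method buys the reuse of Theorem~\ref{lem1} exactly as stated, at the cost of some honest bookkeeping: the cube $N(P,d)$ becomes a parallelepiped under $\Phi^{-1}$, which forces you to work with a shrunken semicube $N(\tilde R,\tfrac{3c_1}{4}d)$ and, for the H\"older difference quotient, to split into a near regime (where Theorem~\ref{lem1} applies directly) and a far regime (where the quotient is already controlled by the pointwise Hessian bounds). You also correctly note that $\tilde\alpha=\alpha\circ\Phi$ remains log-H\"older with $c_{\log}(\tilde\alpha)\leq c_{\log}(\alpha)+C(n,\lambda,\Lambda)$ and that the seminorm distortion under the bi-Lipschitz map $\Phi$, the factor $D^2_z\tilde u=B\,(D^2_xu\circ\Phi)\,B$, and the comparability $d^{\alpha(P)}\approx d^{\alpha(Q)}$ all contribute only constants of the allowed type. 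Both arguments are classical (cf.\ the two standard routes from the Laplacian to constant-coefficient elliptic operators); yours is more modular, the paper's more self-contained.
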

\begin{proof}
The proof is almost the same as the proof of Theorem \ref{lem1}. Instead of $G$ we use the following function
$$
\frac{\left(\textnormal{det}(a_{ij})\right)^{{1}/{2}}}{\left(2\sqrt{\pi}\right)^n}(s-t)^{-{n}/{2}}\expo{-\frac{\sum_{i,j=1}^na_{ij}(x_i-y_i)(x_j-y_j)}{4(s-t)}}.
$$
\end{proof}

In the next theorem we will consider a general parabolic equation on an arbitrary open set. We will prove Schauder interior estimates. For this purpose we need to introduce the interior norms.
Let $P=(x,t), Q\in\ot$. We define $d_P=\min\left\{t, \dist{\partial\Omega}{x}\right\}$, $d_{P,Q}=\min\{d_P,d_Q\}$. Now, we are able to introduce the following norms and seminorms
\begin{align*}
[u]_{k,0,\ot}^* &=[u]_{k,\ot}^*=\sup_{{P\in\ot}}d_P^k|D^k u(P)|, & [u]^{*}_{k,\alpha(\cdot),\Omega}&=\sup_{\substack{P,Q\in\ot\\P\neq Q}}d^{k+\alpha(P)}_{P,Q}\frac{|D^ku(P)-D^{k}u(Q)|}{d^{\alpha(P)}(P,Q)},\\
|u|^*_{k,\ot}&=\sum\limits_{j=0}^k[u]_{j,\ot}^*,&
|u|_{k,\alpha(\cdot),\ot}^*&=|u|_{k,\ot}^*+[u]_{k,\alpha(\cdot),\ot}^*.
\end{align*}
Next, for $s\in\R_+$ we define
\begin{align*}
[u]_{k,\ot}^{(s)}&=[u]_{k,0,\ot}^{(s)}=\sup_{\substack{P\in\ot }}d_P^{k+s}|D^{\beta}u(P)|,&
[u]_{k,\alpha(\cdot),\ot}^{(s)}&=\sup_{\substack{P,Q\in\ot\\ P\neq Q}}d^{k+\alpha(x)+s}_{x,y}\frac{|D^{k}u(x)-D^{k}u(y)|}{|x-y|^{\alpha(x)}},\\
|u|^{(s)}_{k,\ot}&=\sum\limits_{j=0}^k[u]^{(s)}_{j,\ot},&  |f|^{(s)}_{k,\alpha(\cdot),\Omega}&=|f|_{k,\Omega}^{(s)}+[f]^{(s)}_{k,\alpha(\cdot),\Omega}.
\end{align*}

\begin{theo}\label{sches}
Let $\Omega\subset \R^n$ be an open and bounded set and let $T>0$. If $u\in \chp{2}{1}(\otb)$ satisfies
\begin{equation}\label{equel}
u_t-Lu=u_t-\left(a^{ij}D_{ij}u+b^iD_iu+cu\right)\footnote{ We use there the Einstein summation convention.}=f,
\end{equation}
where $f\in C^{\alpha(\cdot)}(\otb)$ and there are positive constants $\lambda$ and $\Lambda$ such that
\begin{align*}
&a^{ij}(x,t)\zeta^i\zeta^j\geq\lambda|\zeta|^2\qquad\textrm{for all $x\in\Omega$ and $0\leq t<T$ and for all $\zeta\in\mathbb{R}^n$,}\\
&|a^{ij}|^{(0)}_{0,\alpha(\cdot),\ot},|b^i|_{0,\alpha(\cdot),\ot}^{(1)},|c|^{(2)}_{0,\alpha(\cdot),\ot}\leq \Lambda,
\end{align*} then
\begin{equation*}
|u|^*_{2,\alpha(\cdot), \ot}\leq C\left(|u|_{0, \ot}+|f|^{(2)}_{0,\alpha(\cdot), \ot}\right),
\end{equation*}
where $C=C(\textup{diam}(\Omega), n, \alpha^-, \alpha^+, c_{\log}(\alpha), \Lambda, \lambda)$. 
\end{theo}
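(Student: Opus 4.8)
The plan is the classical localisation-and-freezing argument, the parabolic variable-exponent counterpart of the interior Schauder estimate in \cite{gt} and of Lemma~3.1 of \cite{bies}: Theorem~\ref{lem2} supplies the local estimates for the frozen constant-coefficient operator, and the interpolation inequalities of Appendix~\ref{apint} are used to close up. Fix an interior point $X_0=(x_0,t_0)\in\ot$ and a small parameter $\delta_0\in(0,1)$ (to be chosen at the very end). Choose a spacetime semicube $N=N(P_0,\delta_0 d_{X_0})$ whose top $P_0$ is slightly later in time than $X_0$ and which satisfies $N\subset\ot$; the inclusion $N\subset\ot$ only forces a dependence of the constant on $\diam{\Omega}$. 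On $N$ rewrite (\ref{equel}) with the principal part frozen at $X_0$,
$$
a^{ij}(X_0)D_{ij}u-u_t=g,\qquad g:=f-\bigl(a^{ij}-a^{ij}(X_0)\bigr)D_{ij}u-b^iD_iu-cu ,
$$
and apply Theorem~\ref{lem2} to the constant uniformly elliptic matrix $\bigl(a^{ij}(X_0)\bigr)$. This bounds $|D^2u(X_0)|$ and, for $P,Q\in N$ with $d(P,Q)\le\tfrac14\delta_0 d_{X_0}$, the difference quotient of $D^2u$ between $P$ and $Q$, in terms of the $|g|$- and $[g]$-norms on the right-hand side of that theorem. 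To pass to the interior norms, one multiplies the pointwise bound by $d_{X_0}^2$; for $[u]^*_{2,\alpha(\cdot),\ot}$ one splits a pair $P,Q$ into two regimes: if $d(P,Q)\le\tfrac14\delta_0 d_{P,Q}$ one uses the semicube estimate with $P_0$ taken close to whichever of $P,Q$ is later in time (so that $d(P,P_0)\approx d(P,Q)$), while if $d(P,Q)>\tfrac14\delta_0 d_{P,Q}$ one simply estimates $|D^2u(P)-D^2u(Q)|\le|D^2u(P)|+|D^2u(Q)|$, which after weighting by $d_{P,Q}^{2+\alpha(P)}$ is $\le C(\delta_0)[u]^*_{2,\ot}$. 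The log-Hölder hypothesis (\ref{rev1}) is used throughout to interchange the quantities $d^{\alpha(P)}$, $d^{\alpha(Q)}$, $d^{\alpha(P_0)}$ and to control factors such as $d_{P,Q}^{\alpha(P)-\alpha(P_0)}$, exactly as in the estimate $C_1 d^{\alpha(Q)}\le d^{\alpha(P)}\le C_2 d^{\alpha(Q)}$ already used in Theorem~\ref{lem1}.

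The step I expect to be the main obstacle is the control of the weighted norms of $g$. Using $|a^{ij}(X)-a^{ij}(X_0)|\le C\Lambda\,\delta_0^{\alpha(X)}$ for $X\in N$ — which follows from $|a^{ij}|^{(0)}_{0,\alpha(\cdot),\ot}\le\Lambda$ — the contribution of $\bigl(a^{ij}-a^{ij}(X_0)\bigr)D_{ij}u$ splits into a piece that, because the oscillation of $a^{ij}$ over $N$ is $O(\delta_0^{\alpha^-})$, carries a genuinely small coefficient in front of $[u]^*_{2,\alpha(\cdot),\ot}$, and a piece that only involves $[u]^*_{2,\ot}$ and $|u|_{0,\ot}$ (with a coefficient depending on $\delta_0,\Lambda$ but not required to be small). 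It is precisely to keep the $|g|_{0,N}$-type term of Theorem~\ref{lem2} under control that the semicube radius is taken proportional to $d_{X_0}$ — the distance to the parabolic boundary — rather than to $d(P,Q)$: multiplying that term by the negative power of the radius and then by the weight produces $C(\delta_0)\,|f|^{(2)}_{0,\alpha(\cdot),\ot}$ with no blow-up as $\delta_0\to0$. The lower-order terms $b^iD_iu$ and $cu$ are handled with $|b^i|^{(1)}_{0,\alpha(\cdot),\ot},|c|^{(2)}_{0,\alpha(\cdot),\ot}\le\Lambda$, which turn their weighted norms into multiples of $[u]^*_{1,\ot}$, $[u]^*_{2,\ot}$ and $|u|_{0,\ot}$.

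Taking the supremum over $X_0$ and over all pairs $P,Q$ yields, with $C$ depending only on the admissible data,
$$
[u]^*_{2,\alpha(\cdot),\ot}+[u]^*_{2,\ot}\le C\bigl(|u|_{0,\ot}+|f|^{(2)}_{0,\alpha(\cdot),\ot}\bigr)+C\Lambda\delta_0^{\alpha^-}[u]^*_{2,\alpha(\cdot),\ot}+C(\delta_0)\Lambda\bigl([u]^*_{2,\ot}+[u]^*_{1,\ot}\bigr).
$$
Now I apply the interpolation inequalities of Appendix~\ref{apint}, $[u]^*_{j,\ot}\le\varepsilon[u]^*_{2,\alpha(\cdot),\ot}+C(\varepsilon)|u|_{0,\ot}$ for $j=1,2$ and all $\varepsilon>0$, to the last group of terms. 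Choosing first $\delta_0$ so small that $C\Lambda\delta_0^{\alpha^-}<\tfrac14$, and then $\varepsilon$ so small that $C(\delta_0)\Lambda\varepsilon<\tfrac14$, the terms containing $[u]^*_{2,\alpha(\cdot),\ot}$ on the right are absorbed into the left, giving $[u]^*_{2,\alpha(\cdot),\ot}+[u]^*_{2,\ot}\le C\bigl(|u|_{0,\ot}+|f|^{(2)}_{0,\alpha(\cdot),\ot}\bigr)$; one further use of interpolation estimates $[u]^*_{1,\ot}$ and reconstitutes the full norm $|u|^*_{2,\alpha(\cdot),\ot}$, which finishes the proof.
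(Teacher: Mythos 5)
Your proposal is correct and follows essentially the same localisation-and-freezing strategy as the paper's proof, including the same two-regime split $d(P,Q)\lessgtr\mu d_{P,Q}$, the same use of log-H\"older continuity to compare $d^{\alpha(P)}$ and $d^{\alpha(Q)}$, and the same absorption via the interpolation inequalities of Appendix~\ref{apint}. (You correctly invoke Theorem~\ref{lem2} for the frozen constant-coefficient operator; the paper's text actually cites Theorem~\ref{lem1} there, which appears to be a slip since the frozen matrix $a^{ij}(P)$ is not the identity.)
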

\begin{proof}
From Lemma \ref{intlem} for $\epsilon>0$ we have
\begin{align*}
|u|_{2,0,\ot}^*\leq \epsilon[u]_{2,\ac,\ot}+C|u|_{0\,\ot},
\end{align*}
where $C=C(\epsilon)$. By this inequality it is sufficient to estimate the term~$[u]_{2,\ac,\ot}$.
There exist two points $P,Q\in \ot$ such that inequality 
\begin{align*}
\frac{1}{2}[u]_{2,\ac,\ot}^*\leq d^{\alpha(P)}_{P,Q}\frac{|D^2_xu(P)-D_x^2u(Q)|}{d^{\alpha(P)}(P,Q)}
\end{align*}
is satisfied. We can assume that $t$-coordinate of $P$ is greater than $t$-coordinate of $Q$.

Let us write equation (\ref{equel}) in the form
\begin{align}\label{eqmod}
a^{ij}(P)D_{ij}-u_t=\left(a^{ij}(P)-a^{ij}\right)D_{ij}u-b^iD_iu-cu+f=F.
\end{align}
Let us take arbitrary $\tfrac{1}{2}>\mu>0$.
We will consider two cases:
\begin{enumerate}
\item\label{przyp1} $d(P,Q)< \mu d_{P,Q}$

From this inequality we conclude $d(P,Q)<\mu d_{P}$. Let $d=\mu d_{P}$ and let us consider the cube $N(P,d)$. We apply Theorem \ref{lem1} to  equation (\ref{eqmod}) and then by inequality (\ref{tezin1}) we get
\begin{align}\label{in20031}
d^{\alpha(P)}\frac{|D^2_xu(P)-D_x^2u(Q)|}{d^{\alpha(P)}(P,Q)}\leq C\left(|F|_{0,N}+d^{\alpha(P)}[F]_{\alpha(P),P,N}+d^{\alpha(Q)}[F]_{\alpha(Q),Q,N}+|u|_{0,N}d^{-2}\right).
\end{align}

We shall estimate terms on the right-hand side of the above inequality. 
First, we will estimate the  term $|F|_{0,N}$. It is easy to see that  
\begin{align*}
d^{-2}|u|_{0,N}\leq \lambda^{-2}d_P^{-2}|u|_{0,\ot}.
\end{align*}

Next, we shall use the fact that for $Q\in N$ we have
\begin{align}\label{3in14}
d_Q\geq d_P-d=d_P-\lambda d_P >\frac{1}{2}d_P.
\end{align}

Thus, we bound the expression $|F|_{0,N}$ in the following way
\begin{align*}
|F|_{0,N}\leq |f|_{0,N}+\sum_{i=1}^n|b^i|_{0,N}|D_iu|_{0,N}+\sum_{i,j=1}^n|a^{ij}(P)-a^{ij}|_{0,N}|D_{ij}u|_{0,N}+|c|_{0,N}|u|_{0,N}.
\end{align*}
Now, we use inequality (\ref{3in14}) and we get
\begin{align}\label{1603in2}
|F|_{0,N}\leq \frac{C}{d_p^2}\left(|f|^{(2)}_{0,\ot}+|u|_{1,\ot}^*+\mu^{\alpha(P)}[u]_{2,\ot}^*\right).
\end{align}
Next, we will estimate the term $d^{\alpha(P)}[F]_{\alpha(P),P,N}$. For this purpose we need the following Leibniz-rule in H\"older space:
Let $g$ and $h$ be  arbitrary functions defined on $\ot$, then
\begin{align*}
[gh]_{\alpha(R),R,N}\leq |g(R)|[h]_{\alpha(R),R,N}+|h|_{0,N}[g]_{\alpha(R),R,N},
\end{align*}
where $R$ is a certain point from the set $\ot$. Thus, we have
\begin{align}\label{in30062}
d^{\alpha(P)}[F]_{\alpha(P),P,N}&\leq C d^{\alpha(P)}\left(d_{P}^{-2-\alpha(P)}[f]^{(2)}_{0,\ac, \ot} + \sum_{i=1}^n|b^i(P)|[D_xu]_{\alpha(P),P,N}+|c(P)|[u]_{\alpha(P),P,N}\right.\nonumber\\&\left.
+\sum_{i,j=1}^n[a^{ij}]_{\alpha(P),P,N}|D^2_xu|_{0,N}+\sum_{i=1}^n[b^i]_{\alpha(P),P,N}|D_xu|_{0,N}+[c]_{\alpha(P),P,N}|u|_{0,N}\right)\nonumber\\
\leq{}& C\left(\mu^{\alpha(P)}d_P^{-2}[f]^{(2)}_{0,\ac, \ot}+\mu^{\alpha(P)}d_P^{\alpha(P)}d_P^{-1}[D_xu]_{\alpha(P),P,N}+\mu^{\alpha(P)}d_P^{\alpha(P)}d_P^{-2}[u]_{\alpha(P),P,N}\right.\nonumber\\&\left.+\mu^{\alpha(P)}d^{-2}_P|u|_{2,0,\ot}^*\right)\leq C\mu^{\alpha(P)}d_P^{-2}\left([f]^{(2)}_{0,\ac, \ot}+|u|_{2,0,\ot}^*+[u]_{0,\ac,\ot}+[u]_{1,\ac,\ot}^*\right).
\end{align} 
Next, we will estimate the term $d^{\alpha(Q)}[F]_{\alpha(Q),Q,N}$. We do it in the similar way as in (\ref{in30062})
\begin{align}\label{in20032}
d^{\alpha(Q)}[F]_{\alpha(Q),Q,N}\leq{} & C\lambda^{\alpha(Q)}d_P^{-2}\left([f]^{(2)}_{0,\ac, \ot}+|u|_{2,0,\ot}^*+[u]_{0,\ac,\ot}+[u]_{1,\ac,\ot}^*\right)\\&
+Cd^{\alpha(Q)}\sum_{i,j=1}^n|a^{ij}(P)-a^{ij}(Q)|[D^2_xu]_{\alpha(Q),Q,N}.
\nonumber
\end{align}
We shall estimate the last term on the right-hand side of the above inequality. We use there inequality~(\ref{3in14})
\begin{align*}
d^{\alpha(Q)}|a^{ij}(P)-a^{ij}(Q)|[D^2_xu]_{\alpha(Q),Q,N}\leq{}& C d^{\alpha(Q)}d^{\alpha(Q)}(P,Q)d^{-\alpha(Q)}_{P,Q}d_P^{-\alpha(Q)-2}|a^{ij}|^*_{0,\ac,\ot}[u]^*_{2,\ac,\ot}\\
={}&C\mu^{2\alpha(Q)}d_P^{-2}[u]^*_{2,\ac,\ot}.
\end{align*}
We put the above inequality into (\ref{in20032}), what yields
\begin{align}\label{in30061}
d^{\alpha(Q)}[F]_{\alpha(Q),Q,N}
&\leq C\mu^{\alpha(Q)}d_P^{-2}\left([f]^{(2)}_{0,\ac, \ot}+|u|_{2,0,\ot}^*+[u]_{0,\ac,\ot}+[u]_{1,\ac,\ot}^*\right)\nonumber\\
&+C\mu^{2\alpha(Q)}d_P^{-2}[u]^*_{2,\ac,\ot}.
\end{align}
Then, we put inequalities (\ref{1603in2}), (\ref{in30062}) and (\ref{in30061}) into (\ref{in20031}) and  we get
\begin{multline}\label{2203in1}
d^{\alpha(P)}\frac{|D^2_xu(P)-D_x^2u(Q)|}{d^{\alpha(P)}(P,Q)} \leq 
\mu^{\alpha(Q)}d_P^{-2}C\left(|f|^{(2)}_{0,\ac, \ot}+|u|_{2,0,\ot}^*\right)+C\mu^{2\alpha(Q)}d_P^{-2}[u]^*_{2,\ac,\ot}\\+C d^{-2}|u|_{0,\ot}+\frac{C}{d_p^2}\left(|f|^{(2)}_{0,\ot}+|u|_{1,\ot}^*+\mu^{\alpha(P)}[u]_{2,\ot}^*\right)+C\mu^{\alpha(P)}d_P^{-2}\left([f]^{(2)}_{0,\ac, \ot}+|u|_{2,0,\ot}^*\right),
\end{multline}
where we have used the interpolation inequality from Lemma \ref{intlem}.

We need only to control the number $\mu^{\alpha(Q)-\alpha(P)}$ to finish the proof. Here we have two cases.
\begin{itemize}
\item[(a)] If $\alpha(Q)-\alpha(P)\geq 0$, then we proceed in the following way
\begin{align*}
\mu^{\alpha(Q)-\alpha(P)}\leq \left(\frac{1}{2}\right)^{\alpha(Q)-\alpha(P)}\leq \left(\frac{1}{2}\right)^{\alpha^--\alpha^+}.
\end{align*}
\item[(b)] If $\alpha(Q)-\alpha(P) <0$, then 
\begin{align*}
\mu^{\alpha(Q)-\alpha(P)}&\leq \left(\frac{d(P,Q)}{d_P}\right)^{\alpha(Q)-\alpha(P)}\leq e^{C_{\log}(\alpha)}d_P^{\alpha(P)-\alpha(Q)}\leq e^{C_{\log}(\alpha)}\max\left(1,{\rm diam}(\ot)\right)^{\alpha(P)-\alpha(Q)}\\
&\leq  e^{C_{\log}(\alpha)}\max\left(1,{\rm diam}(\ot)\right)^{\alpha^+-\alpha^-}.
\end{align*}
\end{itemize}
Finally, from inequality (\ref{2203in1}) we have 
\begin{align}\label{2203in2}
d^{\alpha(P)}\frac{|D^2_xu(P)-D_x^2u(Q)|}{d^{\alpha(P)}(P,Q)}\leq Cd_P^{-2}\left(|f|^{(2)}_{0,\ac, \ot}+|u|_{0,\ot}+\mu^{2\alpha(P)}|u|_{2,\ac,\ot}\right).
\end{align}
\item\label{przyp2} $d(P,Q)\geq \mu d_{P,Q}$ 

In this case we proceed in the following way
\begin{align*}
d^{\alpha(P)}_{P,Q}\frac{|D^2_xu(P)-D_x^2u(Q)|}{d^{\alpha(P)}(P,Q)}\leq C\mu^{-\alpha(P)}d_P^{-2}[u]^*_{2,0,\ot}\leq d_P^{-2} C\left(|u|_{0,\ot}+\mu^{2\alpha(P)}[u]^*_{2,\ac,\ot}\right).
\end{align*}
\end{enumerate}
We see that when we join together the inequalities from cases \ref{przyp1} and \ref{przyp2}, then we obtain the same inequality as in (\ref{2203in2}).

Now, we will estimate from below the left-hand side of  inequality (\ref{2203in2})
\begin{align*}
d^{\alpha(P)}\frac{|D^2_xu(P)-D_x^2u(Q)|}{d^{\alpha(P)}(P,Q)}&=\mu^{\alpha(P)}d_P^{\alpha(P)}\frac{|D^2_xu(P)-D_x^2u(Q)|}{d^{\alpha(P)}(P,Q)}\\
&\geq \mu^{\alpha(P)} d_P^{-2}d_{P,Q}^{2+\alpha(P)}\frac{|D^2_xu(P)-D_x^2u(Q)|}{d^{\alpha(P)}(P,Q)}\geq \mu^{\alpha(P)} d_P^{-2}\frac{1}{2}|u|_{2,\ac,\ot}.
\end{align*}
When we put the above inequality into (\ref{2203in2}), we get 
\begin{align*}
|u|_{2,\ac,\ot}&\leq \mu^{-\alpha(P)}C\left( |f|^{(2)}_{0,\ac, \ot}+|u|_{0,\ot}\right)+ C\mu^{\alpha(P)}|u|_{2,\ac,\ot} \\
&\leq
\mu^{-\alpha^+}C\left( |f|^{(2)}_{0,\ac, \ot}+|u|_{0,\ot}\right)+ C\mu^{\alpha^-}|u|_{2,\ac,\ot}.
\end{align*}
We take  $\mu\leq 1/2$, such that $C\mu^{\alpha^-}=1/2$. Then, we subtract $C\mu^{\alpha^-}|u|_{2,\ac,\ot}=\frac{1}{2} |u|_{2,\ac,\ot}$ from both sides of the previous inequality and we finish the proof.
\end{proof}

\section{Boundary estimates}\label{bousec}
This part of the paper is devoted to proving the boundary Schauder estimates.
Let $y_0=(y_{0,1},\ldots,y_{0,n})\in \mathbb{R}^n$ and let us take $d>0$ and $\bar{d}\leq d$. We define
\begin{align*}
B=\left\{y\colon |y_i-y_{0,i}|\leq d \textrm{ for } i=1,\ldots,n-1 \textrm{ and } -d\leq y_n-y_{0,n}\leq \bar{d}\right\}
\end{align*}
and $N_s=B\times [0,s]$. Let us take $s_0$ such that $s_0\leq d^2$. We assume $s\leq s_0$ we denote $N=N_{s_0}$, $P=(y_0,s_0)$.

\begin{theo}\label{schesbrz}
Let $\Omega\subset\R^n$ be an open and bounded set and $T>0$. Let us assume that $N\subset\otb$ and let $f\in \chpbt (\overline{N})$, $u\in \chp{2}{1}(\overline{N})$ satisfy equation
\begin{align}\label{equ1}
\Delta u-u_t=f
\end{align}
and let $u=0$ on sets $\left\{y_n=\bar{d}\right\}$ and $\left\{t=0\right\}$, then the 
following inequalities
\begin{align}
|D^2u(P)|\leq {}&C\left( |f|_{0,N}+d^{\alpha(P)}[f]_{\alpha(P),P,N}+|u|_{0,N}d^{-2}\right),\label{teez1}\\\begin{split}
d^{\alpha(Q)}\frac{|D^2u(Q)-D^2u(P)|}{d^{\alpha(Q)}(Q,P)}\leq {} &
C\left(|f|_{0,N}+d^{\alpha(P)}[f]_{\alpha(P),P,N}+d^{\alpha(y_0,s)}[f]_{\alpha(y_0,s),(y_0,s),N}\right.\\ & \qquad\left.+d^{\alpha(y,s_0)}[f]_{\alpha(y,s_0),(y,s_0),N}+d^{\alpha(Q)}[f]_{\alpha(Q),Q,N}+|u|_{0,N}d^{-2}\right)
\end{split}\label{teez2}
\end{align}
are satisfied for $Q=(y,s)\in N$ and $d(P,Q)\leq \frac{d}{4}$.
\end{theo}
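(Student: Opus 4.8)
The plan is to deduce the boundary estimate from the interior one (Theorem~\ref{lem1}) by the method of images. Since $u$ vanishes on the flat face $\{y_n=\bar d\}$, I would replace the heat kernel $G$ by the half-space Green function
\[
\Gamma(x,y,t,s)=G(x-y,t-s)-G(x-y^{*},t-s),\qquad y^{*}=(y_1,\dots,y_{n-1},2\bar d-y_n),
\]
$y^{*}$ being the reflection of $y$ across that face. The kernel $\Gamma$ vanishes whenever $x_n=\bar d$, it still satisfies $(\Delta_x+\partial_t)\Gamma=0$ and $(\partial_s-\Delta_y)\Gamma=0$ off the diagonal, and — the reflection $R\colon y\mapsto y^{*}$ being an isometry represented by an orthogonal matrix with entries in $\{0,\pm1\}$ — the image term $G(x-y^{*},t-s)$ and all its derivatives satisfy \eqref{helpineq2} with $|x-y|$ replaced by $|x-y^{*}|$.

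Next I would rerun the argument of Theorem~\ref{lem1} with $v(x,t)=\phi(x,t)\Gamma(x,y,t,s)$ in place of $\phi G$, where $\phi$ is the cut-off around $P$ used there (so $\phi\in C^3_c(N)$, $\phi\equiv1$ on $N_{1/2}$, $\phi\equiv0$ outside $N_{3/4}$, and \eqref{helpineq1} holds). Integrating Green's identity \eqref{green} over $B\times[0,s]$, every boundary term disappears: over the flat face of $\partial B$ because there $\Gamma=0$ and $u=0$; over the remaining faces of $\partial B$ because $\phi$ and its derivatives vanish there (those faces lie at distance $\ge d$ from $P$, whereas $\phi$ is supported in $N_{3/4}$); and over $B\times\{0\}$ because $u=0$ there. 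This gives the representation $u(y,s)=-H_0+J_0$ exactly as in Theorem~\ref{lem1}, now with $\Gamma$ instead of $G$, whence $D^2u(Q)=-H(Q)+J(Q)$ with $H=D^2_yH_0$, $J=D^2_yJ_0$. Writing $\Gamma$ as the difference of $G(x-y,t-s)$ and $G(x-y^{*},t-s)$ splits $H$ and $J$ into a \emph{main} part (coming from $G(x-y,t-s)$) and an \emph{image} part. The main part is estimated word for word as in the proof of Theorem~\ref{lem1}, yielding exactly the terms $|f|_{0,N}$, $d^{\alpha(P)}[f]_{\alpha(P),P,N}$, $d^{\alpha(Q)}[f]_{\alpha(Q),Q,N}$ and $|u|_{0,N}d^{-2}$ appearing in \eqref{teez1} and \eqref{teez2}.

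For the image part I would repeat the same steps, but with $y$, $P$, $Q$ replaced by $y^{*}$, $P^{*}=(y_0^{*},s_0)$, $Q^{*}=(y^{*},s)$; since $R$ is an isometry one has $d(P^{*},Q^{*})=d(P,Q)$ and all the elementary distance inequalities of the interior proof (those behind \eqref{3105in1}, \eqref{in1}, \eqref{in2} and their analogues in the H\"older-difference part) carry over. Estimating the image analogues of the terms $H_1$ and $H'_1$ forces one to subtract $\phi f$ near the singular points $y^{*}$, $y_0^{*}$; organising the H\"older difference by changing the spatial and the time variable one at a time (as in the general case of the term $H'_{11}$ in Theorem~\ref{lem1}) makes the relevant evaluation points of $f$ the two ``mixed corners'' $(y_0,s)$ and $(y,s_0)$ of the rectangle determined by $P$ and $Q$, which is precisely where the extra seminorms $d^{\alpha(y_0,s)}[f]_{\alpha(y_0,s),(y_0,s),N}$ and $d^{\alpha(y,s_0)}[f]_{\alpha(y,s_0),(y,s_0),N}$ in \eqref{teez2} come from. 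The image analogue of the ``$H_2$''-type terms — handled in the interior proof (see \eqref{kini}) by the divergence theorem because $\partial B$ is far from the singularity — needs one further remark: the singular point $y^{*}$ may sit on or just beyond the flat face, but after applying the divergence theorem the contribution of that face enters only through $\Gamma$, which vanishes there, so these terms remain under control.

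The main obstacle will be exactly that when $\bar d$ is small the image kernel $G(x-y^{*},t-s)$ is just as singular as $G(x-y,t-s)$ (then $y^{*}\approx y$), so none of the image-term integrals may be dismissed as ``smooth'': every estimate of the interior proof has to be redone for the shifted singularity, and one must check that the near-diagonal contributions concentrated along the flat face stay integrable — for which the vanishing of $\Gamma$ on that face, together with the inequality $|x-y^{*}|\ge\textnormal{dist}(x,\{x_n=\bar d\})+\textnormal{dist}(y,\{y_n=\bar d\})$ valid for $x,y\in B$, is the key point. Collecting the main and image estimates then gives \eqref{teez1} and \eqref{teez2}.
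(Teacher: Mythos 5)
Your reflected kernel $\Gamma=\bar G$ is exactly the one the paper uses, and the first part of your plan (Green's identity with $v=\phi\bar G$, vanishing of the boundary terms, representation $u=-H_0+J_0$) matches Step~1 of the paper. The gap is in how you propose to estimate $D^2u$ from this representation.

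The interior proof of Theorem~\ref{lem1} relies, in the $H_2$ and $H_2'$ terms (and in establishing finiteness of the relevant derivatives at all), on converting $\partial_{y_i}$ into $-\partial_{x_i}$ so that the divergence theorem can be applied on $\partial B$. For $\bar G=G(x-y,t-s)-G(x-y^*,t-s)$ with $y^*_n=2\bar d-y_n$ this commutation survives only in the tangential directions: $\bar G_{y_i}=\bar G_{x_i}$ for $i\neq n$, but $\bar G_{y_n}=-G_{x_n}(x-y,t-s)-G_{x_n}(x-y^*,t-s)$ while $\bar G_{x_n}=G_{x_n}(x-y,t-s)-G_{x_n}(x-y^*,t-s)$, so in the normal direction the identity fails. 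That is why the paper, in Step~1, proves the finiteness statement $(\ref{prop30061})$ only for $i\neq n$ and explicitly singles out $\partial^2 u/\partial y_n^2$ as needing a different argument. Your sketch does not separate this case and implicitly claims the Gauss-formula estimates ``carry over''; they do not. The remark you offer about the singular point $y^*$ and the vanishing of $\Gamma$ on the flat face does not address this: the obstacle is not a boundary term on $\{x_n=\bar d\}$, it is that the passage from $y$-derivatives to $x$-derivatives is simply unavailable.

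The missing key idea is the reduction $\partial^2_{y_n}u=u_t+f-\sum_{i\neq n}\partial^2_{y_i}u$ coming from the equation $\Delta u-u_t=f$: the paper estimates the tangential second derivatives as in Theorem~\ref{lem1} and then proves the required bounds for $u_t$ directly, differentiating the representation with respect to $s$ rather than twice with respect to $y_n$. For the H\"older seminorm of $u_t$ the paper then invokes Lemma~\ref{lemosz} (Appendix~\ref{apest}), after rescaling to a unit box; it is that lemma, not a reworking of the interior $H'_{11}$ decomposition for the image term, which produces the four ``corner'' seminorms $[f]_{\alpha(P),P,N}$, $[f]_{\alpha(Q),Q,N}$, $[f]_{\alpha(y_0,s),(y_0,s),N}$, $[f]_{\alpha(y,s_0),(y,s_0),N}$ appearing in $(\ref{teez2})$ (your intuition that these come from telescoping in $y$ and $s$ separately is correct in spirit, but the mechanism lives in the $u_t$ estimate, not in the image part of $H'$). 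Without the reduction to $u_t$ and without some replacement for Lemma~\ref{lemosz}, the proposal cannot close.
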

\begin{proof}

We will use the same notation as in the proof of Theorem \ref{lem1}. Moreover, let
\begin{align*}
\overline{G}=G(x,t;y,s)-G(x,t;y^*,s),\textrm{ where $y^*=(y_1,\ldots,y_{n-1},2\bar{d}-y_n)$ for $y=(y_1,\ldots, y_n)$.
}
\end{align*}

\textbf{1.} Integrating over $B\times [0,s]$ the Green identity (\ref{green}) with $u$ and $v=\varphi \bar{G}$ we get
\begin{align}\label{1905in1}
\begin{split}
u(y,s)=&-\int_{0}^s\int_B \phi(x,t) \overline{G}(x-y;t-s) f(x,t)\dx x\dx t+\int_{0}^s\int_Bu(x,t)L_0^*\left(\phi(x,t) \overline{G}(x-y;t-s)\right)\dx x\dx t\\
=&-H_0+J_0,
\end{split}
\end{align}
where $Q=(y,s)\in N$.

Since $|y^*-x|\geq |y-x|$ by inequality (\ref{helpineq2}) we have
\begin{align}\label{brzhelp1}
\left|D^i_tD^j_xD^k_sD^l_y \overline{G}(x,t;y,s)\right|\leq C (s-t)^{-(2i+j+2k+l+n)/2}\expo{-\frac{|x-y|^2}{5(s-t)}}.
 \end{align}
 Let us observe that for $i\neq n$
\begin{align}\label{prop30061}
 \frac{\partial^2}{\partial y_j\partial y_i}\int_0^s\int_B\overline{G}(x,t;y,s)\dx x\dx t<\infty.
\end{align}
Indeed,
\begin{align*}
\frac{\partial^2}{\partial y_j\partial y_i}\int_0^s\int_B\overline{G}(x,t;y,s)&\dx x\dx t=\frac{\partial}{\partial y_j}\int_0^s\int_B\overline{G}_{y_i}(x,t;y,s)\dx x\dx t = \frac{\partial}{\partial y_j}\int_0^s\int_B\overline{G}_{x_i}(x,t;y,s)\dx x\dx t \\&
=\frac{\partial}{\partial y_j}\int_0^s\int_{\partial B}\overline{G}(x,t;y,s)n_i\dx S\dx t=\int_0^s\int_{\partial B}\overline{G}_{y_j}(x,t;y,s)n_i\dx S\dx t\\
&=\int_0^s\int_{\partial B\cap \{x_i=d+y_{0,i}\}}\overline{G}_{y_j}(x,t;y,s)\dx S\dx t-\int_0^s\int_{\partial B\cap \{x_i=-d+y_{0,i}\}}\overline{G}_{y_j}(x,t;y,s)\dx S\dx t
\\&\hskip-3.5pt\stackrel{(\ref{brzhelp1})}{\leq} C\int_0^s\int_{\partial B} (s-t)^{-(1+n)/2}\expo{-\frac{|x-y|^2}{5(s-t)}}\dx S\dx t.
\end{align*}
Using the assumption $d(P,Q)\leq \frac{d}{4}$ for $x\in\partial B$, we have $|x-y|\geq \frac{3d}{4}$, so  the last integral in the above inequality  can be  estimated by the following expression
\begin{align*}
\int_0^s\int_{\partial B} (s-t)^{-(1+n)/2}\expo{-C\frac{d^2}{(s-t)}}\dx S\dx t=Cd^{n-1}\int_0^s(s-t)^{-(1+n)/2}\expo{-C\frac{d^2}{(s-t)}}\dx t.
\end{align*}
We substitute $z=\frac{d^2}{s-t}$ into the  above integral and we get
\begin{align*}
Cd^{n-1}\int^{\infty}_{d^2/s} z^{(n+1)/2}d^{-1-n}\expo{-Cz}\frac{z^2}{d^2}\dx z\leq C\int^{\infty}_{0}z^{(n+5)/2}\expo{-Cz}\dx z\leq C 
\end{align*}
and (\ref{prop30061}) follows.

We can obtain (\ref{teez1}) in the similar way as in the proof of Theorem \ref{lem1}. The derivatives 
\begin{align*}
\frac{\partial^2 u}{\partial y_i\partial y_j}\textrm{ for }i\neq n\textrm{ or } j\neq n
\end{align*}
we can estimate almost step by step as in the proof of Theorem \ref{lem1}. It is left to estimate the derivative 
$
\frac{\partial^2u}{\partial^2y_n}$.

\textbf{2.} Because $u$ satisfies equation (\ref{equ1}), it suffices to prove inequality (\ref{teez2}) for $u_t$.
 We differentiate equality (\ref{1905in1}) with respect to $s$ and we get
\begin{align}\label{3005eq}
\frac{\partial u(y,s)}{\partial s}=-H(Q)+J(Q),
\end{align}
where
\begin{align*}
H(Q)=\int_0^s\int_B\phi(x,t)\overline{G}_s(x-y;t-s)f(x,t)\dx x\dx t+f(y,s)\phi(y,s)
\end{align*}
and
\begin{align}\label{row471}
\begin{split}
J(Q)&=\int_{0}^s\int_Bu(x,t)L_0^*\left(\phi(x,t) \overline{G}_s(x-y;t-s)\right)\dx x\dx t+\lim_{t\to s^-}\int_Bu(x,t)L_0^*\left(\phi(x,t) \overline{G}(x-y;t-s)\right)\dx x\\
&=J_1(Q)+J_2(Q).
\end{split}
\end{align}
First, we will bound $J$
\begin{align}\label{in2305}
\begin{split}
J_2(Q)=&\\
\lim_{t\to s^-}\int_B& u(x,t)\left(\phi_t(x,t)\overline{G}(x-y;t-s)+\Delta\phi(x,t)\overline{G}(x-y;t-s)+2D\phi(x,t)\cdot D_x \overline{G}(x-y;t-s) \right)\dx x.
\end{split}
\end{align}
Now, we shall compute the third term in the above equality
\begin{align*}
\int_Bu(x,t) D\phi(x,t)\cdot D_x \overline{G}(x-y;t-s)\dx x={}&\int_{\partial B}u(x,t) D\phi(x,t)\cdot n(x)  \overline{G}(x-y;t-s)\dx S(x)\\-\int_B {\rm div}_x\left(u(x,t)D\phi(x,t)\right) \overline{G}(x-y;t-s)\dx x={}&
-\int_B {\rm div}_x\left(u(x,t)D\phi(x,t)\right) \overline{G}(x-y;t-s)\dx x.
\end{align*}
We plug the above  equality into (\ref{in2305}) and we get
\begin{align*}
J_2(Q)&=\lim_{t\to s^-}\int_B\left(u(x,t)\phi_t(x,t)+u(x,t)\Delta\phi(x,t)-{\rm div}_x\left(u(x,t)D\phi(x,t)\right)\right)\overline{G}(x-y;t-s)\dx x\\
&=u(y,s)\phi_t(y,s)+u(y,s)\Delta\phi(y,s)-{\rm div}_x\left(u(y,s)D\phi(y,s)\right),
\end{align*}
what is equal to $0$ because $d(P,Q)\leq \frac{d}{4}$ and $\phi(Q)=1$.
Thus, we have
\begin{align}\label{equ471}
\begin{split}
&J(Q)=J_1(Q)=\\
&\int_0^s\int_B u(x,t)\big(\phi_t(x,t)\overline{G}(x-y;t-s)+\Delta\phi(x,t)\overline{G}(x-y;t-s)+2D\phi(x,t)\cdot D_x \overline{G}(x-y;t-s) \big)\dx x.
\end{split}
\end{align}
This term we bound as term (\ref{2505in}) in the proof of Theorem \ref{lem1}.
The integral in $H(Q)$ we bound as $H$ in inequality (\ref{kini}). We use there inequality (\ref{brzhelp1}). 

\textbf{3.} Now, we shall estimate the H\"older semi-norm of $D^2u$.
Derivatives 
\begin{align*}
\frac{\partial^2 u}{\partial y_i\partial y_j}\ {\rm for }\ i\neq n\ {\rm or }\ j\neq n
\end{align*}
 we estimate in the similar way as in the proof of Theorem \ref{lem1}.
The derivative $\frac{\partial^2 u}{\partial^2 y_n}$  we shall control by (\ref{equ1}) and by the estimation for $u_t$.
Hence, now we need to estimate the H\"older semi-norm of $u_t$. By equality (\ref{3005eq}) we have
\begin{align*}
d^{\alpha(P)}\frac{|u_s(P)-u_s(Q)|}{d^{\alpha(P)}(P,Q)}\leq d^{\alpha(P)}\frac{|H(P)-H(Q)|}{d^{\alpha(P)}(P,Q)}+d^{\alpha(P)}\frac{|J(P)-J(Q)|}{d^{\alpha(P)}(P,Q)}.
\end{align*}
As in equality (\ref{equ471})  we can show that 
\begin{align*}
J(P)=J_1(P)\qquad\textrm{and} \qquad J(Q)=J_1(Q), 
\end{align*}
where $J_1$ is given in  (\ref{row471}). Thus, we get
\begin{align*}
d^{\alpha(P)}\frac{|J(P)-J(Q)|}{d^{\alpha(P)}(P,Q)}=d^{\alpha(P)}\frac{|J_1(P)-J_1(Q)|}{d^{\alpha(P)}(P,Q)}.
\end{align*}
We estimate the above term in similar way as the term $J'$ in inequality (\ref{lem1in1pom}).

It is left to bound 
\begin{align*}
d^{\alpha(P)}\frac{|H(P)-H(Q)|}{d^{\alpha(P)}(P,Q)}.
\end{align*}
For this purpose we apply Lemma \ref{lemosz}. In this Lemma we can change $G$ into $\overline{G}$. Let us rewrite $H(P)-H(Q)$ to use this result. We define $g(Q)=f(Q)\phi(Q)$ and then we have
\begin{align}\label{eq5021}
H(P)-H(Q)=\frac{\partial}{\partial s}\int_0^{s_0}\int_Bg(x,t)\overline{G}(x,t;y_0,s_0)\dx x\dx t- \frac{\partial}{\partial s}\int_0^s\int_Bg(x,t)\overline{G}(x,t;y,s)\dx x\dx t.
\end{align}
We can assume $y_0=0$.
We transform $(x,t)\to(\bar{x},\bar{t})\colon N\to U$ (we use the notion from Lemma \ref{lemosz}) in the following way $\bar{x}=x/d$ and $\bar{t}=t/d$. So we transform $P$ into $\bar{P}$ and $Q$ into $\bar{Q}$. Let us also see that $B$ is transformed into $I=(-1,1)\times\times\ldots\times (-1,1)\times (-1,\beta)\subset\R^n$, where $\beta=\frac{\bar{d}}{{d}}$. Thus, the first integral in (\ref{eq5021}) we can rewrite in the following way
\begin{align*}
\int_0^{\bar{s}_0}\int_I\bar{g}(\bar{x},\bar{t})\overline{G}(\bar{x},\bar{t};\bar{y}_0,\bar{s}_0)\dx \bar{x}\dx \bar{t},
\end{align*}
where $\bar{g}(\bar{x},\bar{t})=g(d\bar{x},d^2\bar{t})$. The second integral from (\ref{eq5021}) we can rewrite in the analogous way.

All of the assumptions of Lemma \ref{lemosz} are satisfied  in the easy way. 
Thus, thanks to this Lemma the proof is finished.
\end{proof}
\begin{theo}\label{schesbrzog}
Let us assume that sets $N$ and $\otb$ satisfy the same conditions as in Theorem \ref{schesbrz} and let $A=(a_{ij})_{ij}\in\R^{n\times n}$ be a matrix such that inequalities
$$
\lambda|\zeta|^2\leq \sum_{i,j=1}^na_{ij}\zeta_i\zeta_j\leq \Lambda|\zeta|^2\textrm{ for all }\zeta\in\R^n
$$
are satisfied for certain $\lambda>0$ and $\Lambda>0$.

Let $u\in \chp{2}{1}(\overline{N})$ satisfies the equation  
$$
Lu=\sum_{i,j=1}^na_{ij}u_{x_ix_j}-u_t=f,
$$
where $f\in\chpbt(\overline{N})$. Then 
\begin{align*}
|D^2u(P)|\leq {}&C\left( |f|_{0,N}+d^{\alpha(P)}[f]_{\alpha(P),P,N}+|u|_{0,N}d^{-2}\right),\\
d^{\alpha(Q)}\frac{|D^2u(Q)-D^2u(P)|}{d^{\alpha(Q)}(Q,P)}\leq{} &
C\left(|f|_{0,N}+d^{\alpha(P)}[f]_{\alpha(P),P,N}+d^{\alpha(y_0,s)}[f]_{\alpha(y_0,s),(y_0,s),N}\right.\\ & \qquad\left.+d^{\alpha(y,s_0)}[f]_{\alpha(y,s_0),(y,s_0),N}+d^{\alpha(Q)}[f]_{\alpha(Q),Q,N}+|u|_{0,N}d^{-2}\right)
\end{align*}
hold for any $Q=(y,s)\in N$ such that $d(P,Q)\leq \frac{d}{4}$, where
  $C=C\left(\textnormal{diam}\left(\ot\right),n, \alpha^+,\alpha^-,c_{\log}(\alpha),\Lambda,\lambda \right)$.
\end{theo}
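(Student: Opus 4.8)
The plan is to reduce Theorem \ref{schesbrzog} to Theorem \ref{schesbrz} by exactly the same device used to pass from Theorem \ref{lem1} to Theorem \ref{lem2}: freeze and diagonalize the leading coefficient matrix. Since $A=(a_{ij})$ is a constant, symmetric, positive-definite matrix with spectrum contained in $[\lambda,\Lambda]$, we may diagonalize it by an orthogonal transformation and then rescale; equivalently, we replace the fundamental solution $G$ of the heat equation by the Gaussian kernel
\begin{align*}
\Gamma(x-y,t-s)=\frac{(\det A)^{1/2}}{(2\sqrt{\pi})^n}(s-t)^{-n/2}\expo{-\frac{\sum_{i,j=1}^n a_{ij}^{-1}(x_i-y_i)(x_j-y_j)}{4(s-t)}},
\end{align*}
which is the fundamental solution of $\sum a_{ij}\partial_{x_ix_j}-\partial_t$ and which satisfies the same type of pointwise bounds as in (\ref{helpineq2}) with constants depending additionally on $\lambda,\Lambda$; the ellipticity bounds guarantee that the exponent $-\sum a_{ij}^{-1}(x_i-y_i)(x_j-y_j)/(4(s-t))$ is controlled above and below by multiples of $-|x-y|^2/(s-t)$, so every Gaussian estimate used in the proof of Theorem \ref{schesbrz} carries over with $5$ replaced by a suitable constant.

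The key point is that the boundary structure is preserved. The proof of Theorem \ref{schesbrz} used the reflected kernel $\overline{G}(x,t;y,s)=G(x,t;y,s)-G(x,t;y^*,s)$ with $y^*$ the reflection of $y$ across the hyperplane $\{y_n=\bar d\}$, together with the two facts (i) $|y^*-x|\geq |y-x|$ for $x$ in the relevant region, which gave (\ref{brzhelp1}), and (ii) the boundary integral identity (\ref{prop30061}) which controlled the mixed second derivatives $\partial^2/\partial y_i\partial y_j$ with $i\neq n$. For the reflection to annihilate the kernel on $\{y_n=\bar d\}$ we need the Gaussian to be invariant under $x_n\mapsto 2\bar d-x_n$, i.e. we need the matrix $A$ to have no mixed terms coupling the $x_n$-direction with the tangential directions. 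This is \emph{not} automatic for a general $A$, so the first step of the proof must be a tangential linear change of variables in $\R^{n-1}$ (leaving $x_n$ fixed and leaving the half-space $\{x_n\leq \bar d\}$ and the slab $B$ invariant up to an affine transformation) after which $A$ becomes block-diagonal, $A=\mathrm{diag}(A',a_{nn})$ with $A'\in\R^{(n-1)\times(n-1)}$; a further rescaling in $x_n$ and in $t$ normalizes $a_{nn}$, and then $A'$ is diagonalized by an orthogonal transformation of the tangential variables. Under all these transformations the boundary $\{y_n=\bar d\}$ and the initial surface $\{t=0\}$ are preserved, the domain $B$ is carried to a comparable box, the log-H\"older exponent $\alpha$ is replaced by $\alpha$ composed with a bi-Lipschitz map (hence still log-H\"older with $c_{\log}$ changed by a controlled amount, cf. the discussion at the end of the proof of Theorem \ref{sches}), and the various H\"older seminorms of $f$ and $u$ change only by constants depending on $\lambda,\Lambda,n$.

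With these reductions in place, one repeats verbatim the three-step argument of Theorem \ref{schesbrz}: the Green identity (\ref{green}) for the operator $\sum a_{ij}\partial_{x_ix_j}-\partial_t$ and its adjoint, with $v=\phi\overline{\Gamma}$, produces the representation $u(y,s)=-H_0+J_0$; the term $J$ (coming from the cut-off region where $\phi$ is non-constant) is estimated exactly as in Step 2 of Theorem \ref{lem1} using the Gaussian decay of $\overline\Gamma$ away from the pole, yielding the $|u|_{0,N}d^{-2}$ contribution; the term $H$ splits as in (\ref{kini}) into a piece with the H\"older increment of $f\phi$ and a piece with the integral of the kernel, and the H\"older-seminorm estimate of $D^2u$ is obtained by the same decomposition into $H_1',\dots,H_4'$ (and their sub-pieces) together with Lemma \ref{lemosz}, which the author already notes applies with $G$ replaced by $\overline G$ — and, by the same token, with $\overline G$ replaced by $\overline\Gamma$. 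The derivatives $\partial^2 u/\partial y_i\partial y_j$ with $i\neq n$ are handled directly, and $\partial^2 u/\partial y_n^2$ is recovered from the equation $\sum a_{ij}u_{x_ix_j}-u_t=f$ — here one uses that $a_{nn}$ is bounded below by $\lambda$ after normalization — combined with the estimate for $u_t$ and for the already-controlled mixed derivatives.

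The only genuinely non-routine point, and the one I would flag as the main obstacle, is the change of variables in the first step: one must verify that a single affine map can simultaneously (a) block-diagonalize $A$ in the sense of decoupling the normal direction from the tangential ones, (b) preserve the hyperplane $\{x_n=\bar d\}$ and the half-space on the correct side of it so that the reflection trick still works, (c) preserve $\{t=0\}$ and the ordering $t\leq s$, and (d) carry the box $B$ to a comparable box so that all the geometric inequalities ($|x-y|\geq \tfrac{3d}{4}$ on $\partial B$, etc.) survive with the same form up to constants. This is elementary linear algebra — write $A$ in block form, use the symmetric positive-definiteness to complete the square in the tangential variables relative to the normal variable — but it does require care to track how $\bar d$, $d$, $s_0$, and the seminorm constants transform; all the subsequent analysis is then a transcription of the proof of Theorem \ref{schesbrz}. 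Hence the proof is genuinely ``almost the same,'' and I would present it as: perform the affine normalization, observe that $\overline\Gamma$ satisfies (\ref{brzhelp1}) and the boundary identity (\ref{prop30061}), and then cite the argument of Theorem \ref{schesbrz} mutatis mutandis.
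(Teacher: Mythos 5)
Your proposal takes essentially the same route as the paper: affinely change variables so the constant-coefficient operator becomes the heat operator on a transformed half-slab, then invoke the argument of Theorem \ref{schesbrz}, and transform back. In fact your write-up is more careful than the paper's one-paragraph proof on the one point that matters. You observe that the affine normalization must decouple the $x_n$-direction from the tangential ones and preserve the flat cap $\{x_n=\bar d\}$ so that the reflection $y\mapsto y^*$ still annihilates the kernel there, and you propose a shear (completing the square in the normal variable, preserving $x_n$), then a normal/time rescaling, then a tangential orthogonal diagonalization, noting that the image of the box $B$ is only a comparable parallelepiped. The paper instead asserts outright that there is a $P\in\R^{n\times n}$ with $PAP^{T}=I$ mapping the coordinate box $B$ onto another coordinate box $[-\beta_1,\beta_1]\times\cdots\times[-\beta_n,\beta_n\theta]$, citing Lemma 4.1 of \cite{bies1}; but any such $P$ equals $QA^{-1/2}$ with $Q$ orthogonal, and $QA^{-1/2}B$ is a coordinate-aligned box only when $A^{-1/2}e_1,\ldots,A^{-1/2}e_n$ are pairwise orthogonal, i.e. only when $A$ is already diagonal. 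So the paper, like you, actually has to settle for a sheared box and argue that the geometric inequalities of Theorem \ref{schesbrz} survive up to constants; you at least name this step explicitly, whereas the paper suppresses it. Your alternative endpoint (stop at block-diagonal $A$ and redo the reflection argument with the anisotropic kernel $\overline\Gamma$) versus the paper's (normalize all the way to $A=I$ and reuse $\overline G$ directly) is a cosmetic difference.
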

\begin{proof}
Let us assume that $y_0$, the center of $N$ is equal $0$.
There exists a matrix $P\in\R^{n\times n}$ such that matrix $\widetilde{A}=PAP^T$ is the unit matrix and $PB=[-\beta_1,\beta_1]\times\ldots\times [-\beta_n,\beta_n\theta]$, where $\beta_1,\ldots,\beta_n>0$ and $0<\theta\leq 1$ (for details see the proof of Lemma 4.1 in \cite{bies1}). Let us introduce the following notations $\widetilde{B}=PB$ and $\widetilde{N}=\widetilde{B}\times [0,s_0]$.
We define 
\begin{align*}
\tilde{u}(y,t)=u(P^{-1}y,t),\qquad\tilde{f}(y,t)=f(P^{-1}y,t)
\end{align*}
for $(y,t)\in \widetilde{N}$.
It is easy to check that $\tilde{u}$ and $\tilde{f}$ satisfy the equation
\begin{align}\label{eq190301}
\Delta \tilde{u} -\tilde{u}_t=\tilde{f}.
\end{align}

Theorem \ref{schesbrz} could be also proved for sets of type $\widetilde{N}$. Thus, we can use this Lemma for equation (\ref{eq190301}). Then, we transform $\widetilde{N}$ into $N$, $\tilde{u}$ into $u$ and $\tilde{f}$ into $f$ and in this way we finish the proof of the Lemma.
\end{proof}

The next theorem deals with boundary Schauder estimates on general open set. We need boundary norms and seminorms to prove it.
We define a parabolic boundary of $\ot$ as $\gt=\partial\Omega_T\setminus \Omega\times\{T\}$.
Let $\Gamma\subset \gt$.  For $P, Q\in\ot$ we define $\bar{d}_P=\dist{P}{\gt\setminus\Gamma}$, $\bar{d}_{P,Q}=~\min(\bar{d}_P, \bar{d}_Q)$. 
In the sequel, we shall use the following notations
\begin{align*}
[u]^*_{k,0,\otg}=[u]^*_{k,\otg} ={}& \sup_{\substack{P\in\ot
 }}\bar{d}_P^k\left|D^k(P)\right|,&	[u]^*_{k,\alpha(\cdot),\otg}={}&\sup_{\substack{P,Q\in\ot \\P\neq Q}} \bar{d}_{P,Q}^{k+\alpha(P)}\frac{\left|D^ku(P)-D^ku(Q)\right|}{d^{\alpha(P)}(P,Q)},	\\
|u|^*_{k,0,\otg}={}&|u|^*_{k,\otg}=\sum\limits_{j=0}^k[u]_{j,\otg}^*,&		|u|^*_{k,\alpha(\cdot),\otg}={}&|u|^*_{k,\otg}+[u]_{k,\alpha(\cdot),\otg}^*.
\end{align*}
In the analogous way we define norms $|\cdot|^{(s)}_{k,\ac,\otg}$ and respect seminorms.

We get the following theorem by an application of Theorem \ref{schesbrz}. 
\begin{theo}\label{thschesbrz}
Let $\Omega\subset \R^n$ be an open and bounded set and let $T>0$. Moreover, let $\Gamma$ be a  portion of $\gt$ contained in  $\{x_n=0\}\cup\{t=0\}$. 
If $u\in \chp{2}{1}(\otb)$ satisfies
\begin{align*}
u_t-Lu=u_t-\left(a^{ij}D_{ij}u+b^iD_iu+cu\right)=f,\quad u|_{\Gamma}=0,
\end{align*}
where $f\in C^{\alpha(\cdot)}(\otb)$ and there are positive constants $\lambda$ and $\Lambda$ such that
\begin{align*}
&a^{ij}(x,t)\zeta^i\zeta^j\geq\lambda|\zeta|^2\qquad\textrm{for all $x\in\Omega$ and $0\leq t<T$ and for all $\zeta\in\mathbb{R}^n$,}\\
&|a^{ij}|^{(0)}_{0,\alpha(\cdot),\otg},|b^i|_{0,\alpha(\cdot),\otg}^{(1)},|c|^{(2)}_{0,\alpha(\cdot),\otg}\leq \Lambda,
\end{align*} then
\begin{equation*}
|u|^*_{2,\alpha(\cdot), \otg}\leq C\left(|u|_{0, \ot}+|f|^{(2)}_{0,\alpha(\cdot), \otg}\right),
\end{equation*}
where $C=C(\textup{diam}(\Omega), T, n, \alpha^-, \alpha^+, c_{\log}(\alpha), \Lambda, \lambda,\Gamma)$.
\end{theo}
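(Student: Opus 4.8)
The plan is to run the standard localization scheme, following the proof of Theorem~\ref{sches} but now working with the weighted boundary norms $|\cdot|^*_{2,\ac,\otg}$ and replacing the interior local estimate of Theorem~\ref{lem2} by the flat--boundary estimate of Theorem~\ref{schesbrzog} whenever the space--time box attached to a point reaches the portion $\Gamma$. First I would reduce to the top weighted semi-norm: by the interpolation inequality of Lemma~\ref{intlem} in its form for the weighted norms over $\otg$,
\begin{align*}
|u|^*_{2,0,\otg}\le\epsilon\,[u]^*_{2,\ac,\otg}+C(\epsilon)\,|u|_{0,\ot}\qquad\text{for every }\epsilon>0,
\end{align*}
so it suffices to bound $[u]^*_{2,\ac,\otg}$. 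I would then pick points $P=(x,t)$ and $Q$, with the time coordinate of $P$ at least that of $Q$, almost realizing the supremum, so that $\tfrac12[u]^*_{2,\ac,\otg}\le\bar{d}_{P,Q}^{2+\alpha(P)}|D^2u(P)-D^2u(Q)|/d^{\alpha(P)}(P,Q)$, where $\bar{d}_P=\dist{P}{\gt\setminus\Gamma}$, and fix a small $0<\mu<\tfrac12$ to be chosen at the very end.

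The argument then splits according to the size of $d(P,Q)$ relative to $\mu\bar{d}_{P,Q}$. In the principal case $d(P,Q)<\mu\bar{d}_{P,Q}$ I would set $d=\mu\bar{d}_P$ and attach to $P$ a box of size $d$. The key geometric point is that, since $d<\bar{d}_P$, this box cannot meet $\gt\setminus\Gamma$; it can meet only $\Gamma$, which is flat there because $\Gamma\subset\{x_n=0\}\cup\{t=0\}$ --- and the model box of Theorems~\ref{schesbrz}--\ref{schesbrzog} already carries the homogeneous condition $u=0$ simultaneously on a flat spatial face and on the initial slice, so even the corner where both pieces of $\Gamma$ meet near $P$ is covered. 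Freezing the leading coefficients at $P$ and rewriting the equation as
\begin{align*}
a^{ij}(P)D_{ij}u-u_t=\bigl(a^{ij}(P)-a^{ij}\bigr)D_{ij}u-b^iD_iu-cu+f=:F,
\end{align*}
I would apply Theorem~\ref{schesbrzog} when the box meets $\Gamma$ and Theorem~\ref{lem2} otherwise, obtaining in either case an inequality of the shape (\ref{teez2})/(\ref{tezin1}) with $f$ replaced by $F$ on the box $N$.

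Next I would estimate the right-hand side exactly as in the passage (\ref{1603in2})--(\ref{in30061}) of the proof of Theorem~\ref{sches}: using that $\bar{d}_{Q'}>\tfrac12\bar{d}_P$ for every point $Q'$ of $N$, the hypotheses $|a^{ij}|^{(0)}_{0,\ac,\otg},|b^i|^{(1)}_{0,\ac,\otg},|c|^{(2)}_{0,\ac,\otg}\le\Lambda$, the H\"older continuity of the $a^{ij}$ (which is what produces the extra smallness $\mu^{2\alpha(Q)}$ out of the factor $|a^{ij}(P)-a^{ij}(Q)|$), the Leibniz rule in variable H\"older spaces, and the interpolation inequality, I expect each of $|F|_{0,N}$, $d^{\alpha(P)}[F]_{\alpha(P),P,N}$, $d^{\alpha(Q)}[F]_{\alpha(Q),Q,N}$ and the two auxiliary semi-norm terms at $(y_0,s)$ and $(y,s_0)$ appearing in (\ref{teez2}) to be bounded by $C\bar{d}_P^{-2}\mu^{\alpha(P)}\bigl(|f|^{(2)}_{0,\ac,\otg}+|u|_{0,\ot}+|u|^*_{2,\ac,\otg}\bigr)$. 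Here the several $[f]$--terms based at different points collapse to $|f|^{(2)}_{0,\ac,\otg}$ by log--H\"older continuity of $\alpha$ together with the equivalence $d^{\alpha(P)}(P,Q)\asymp d^{\alpha(Q)}(P,Q)$, while any stray factor $\mu^{\alpha(Q)-\alpha(P)}$ is absorbed into a constant depending only on $\alpha^\pm$, $c_{\log}(\alpha)$, $\diam{\ot}$ and $T$, just as in the two sub-cases at the end of the proof of Theorem~\ref{sches}. Multiplying through by $\bar{d}_{P,Q}^{2+\alpha(P)}$ and using $\bar{d}_{P,Q}\le\bar{d}_P$, this case yields $\tfrac12[u]^*_{2,\ac,\otg}\le C\bigl(|f|^{(2)}_{0,\ac,\otg}+|u|_{0,\ot}+\mu^{2\alpha^-}[u]^*_{2,\ac,\otg}\bigr)$. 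The complementary case $d(P,Q)\ge\mu\bar{d}_{P,Q}$ is immediate from the definition of the weighted semi-norms together with one more application of interpolation, and gives the same bound. Combining the two cases, choosing $\mu$ so small that $C\mu^{2\alpha^-}\le\tfrac14$, absorbing that term on the left, and using interpolation once more to restore the full norm $|u|^*_{2,\ac,\otg}$, completes the proof.

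The step I expect to be the real work is \emph{not} any new estimate but the geometric and notational bookkeeping: one must choose the box attached to $P$ so that it simultaneously stays within the $\bar{d}_P$--neighbourhood (hence away from $\gt\setminus\Gamma$) and places the part of $\Gamma$ it does meet in the flat configuration required by Theorem~\ref{schesbrzog}, and one must carry the three distinct weight exponents attached to $a^{ij}$, $b^i$, $c$ (and $f$) through the Leibniz rule and the interpolation estimates so that every error term acquires a positive power of $\mu$ that can eventually be absorbed. The heat-kernel bounds themselves are exactly those already established in the proofs of Theorems~\ref{lem2} and~\ref{schesbrzog}.
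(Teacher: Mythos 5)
Your proposal is correct and follows exactly the route the paper takes: the paper's own proof is a one-liner stating that one repeats the argument of Theorem~\ref{sches} with Theorem~\ref{schesbrzog} in place of Theorem~\ref{lem2}, and you have simply spelled out that scheme in detail, replacing $d_P$ by the boundary distance $\bar{d}_P=\dist{P}{\gt\setminus\Gamma}$ and invoking the flat-boundary estimate whenever the box reaches $\Gamma$. Your identification of the ``real work'' as the geometric bookkeeping (keeping the box away from $\gt\setminus\Gamma$ while positioning the portion of $\Gamma$ it meets in the flat configuration) and carrying the weight exponents through the Leibniz and interpolation steps matches what is implicitly deferred to the reader in the paper's terse treatment.
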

\begin{proof}
The proof is analogous to the proof of Theorem \ref{sches}. There we use Theorem \ref{schesbrzog} instead of Theorem \ref{lem2}.
\end{proof}
\section{Global estimates}\label{glosec}
We shall prove global Schauder estimates in this section. We need the following Lemma.
\begin{lem}\label{brzglscheslem}
Let us assume that $\Omega$ is a set of class $C^{2,\alpha^+}$. 
If $u\in\chp{2}{1}(\otb)$ satisfies
\begin{align*}
u_t-Lu={}&f\textrm{ on }\ot,\\
u={}&\, 0\textrm{ on }\gt=\partial\Omega_T\setminus \Omega\times\{T\},
\end{align*}
where $f\in C^{\alpha(\cdot)}(\otb)$ and there are positive constants $\lambda$ and $\Lambda$ such that
\begin{align*}
&a^{ij}(x,t)\zeta^i\zeta^j\geq\lambda|\zeta|^2\qquad\textrm{for all $x\in\Omega$ and $0\leq t<T$ and for all $\zeta\in\mathbb{R}^n$,}\\
&|a^{ij}|_{0,\alpha(\cdot),\ot},|b^i|_{0,\alpha(\cdot),\ot},|c|_{0,\alpha(\cdot),\ot}\leq \Lambda,
\end{align*} then there exists $\rho>0$ such that for all $P\in \partial\Omega\times [0,T]\cup\Omega\times\{0\}$ the following inequality
\begin{equation*}
|u|_{2,\alpha(\cdot), \Omega_T\cap B(P,\delta)}\leq C\left(|u|_{0, \ot}+|f|_{0,\alpha(\cdot), \ot}\right)
\end{equation*}
holds and $C=C(\textup{diam}(\Omega), T, n, \alpha^-, \alpha^+, c_{\log}(\alpha), \Lambda, \lambda)$.
\end{lem}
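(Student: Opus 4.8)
The plan is to deduce the lemma from the local boundary estimate of Theorem~\ref{thschesbrz} by flattening $\partial\Omega$ near $P$ and then trading the weighted boundary norm for an ordinary one on a slightly smaller ball. Two configurations occur. If $P\in\Omega\times\{0\}$ stays away from $\partial\Omega$, a neighbourhood of $P$ in $\otb$ already has the model form $\widetilde\Omega\times[0,\tau)$ with flat bottom $\{t=0\}$, and Theorem~\ref{thschesbrz} applies directly with $\Gamma$ a small portion of $\Omega\times\{0\}$. If $P\in\partial\Omega\times[0,T]$ (the corner $\partial\Omega\times\{0\}$ included), I would use the hypothesis $\Omega\in C^{2,\alpha^+}$: by compactness of $\partial\Omega$ there is $R>0$, independent of $P$, so that after a rotation $\partial\Omega\cap B(P,R)$ is a graph $x_n=\gamma(x')$ with $\gamma\in C^{2,\alpha^+}$ of uniformly bounded norm; set $\Psi(x',x_n)=(x',x_n-\gamma(x'))$, a $C^{2,\alpha^+}$ diffeomorphism of $B(P,R)$ onto its image that acts as the identity in $t$. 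Then $\widetilde\Omega:=\Psi(\Omega\cap B(P,R))$ is open and bounded, the parabolic boundary of $\widetilde\Omega\times(0,T)$ contains a portion $\Gamma\subset\{x_n=0\}\cup\{t=0\}$ near $\Psi(P)$, and $\tilde u:=u\circ\Psi^{-1}$, $\tilde f:=f\circ\Psi^{-1}$ solve a transformed equation $\tilde u_t-\widetilde L\tilde u=\tilde f$ with $\tilde u=0$ on $\Gamma$.

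Next I would verify that $\widetilde L$, $\tilde f$ and the reparametrised exponent $\tilde\alpha:=\alpha\circ\Psi^{-1}$ satisfy the assumptions of Theorem~\ref{thschesbrz} with constants controlled only through $\lambda,\Lambda$ and the geometry of $\partial\Omega$. The principal part transforms into $\tilde a^{ij}=a^{kl}(\partial_k\Psi^i)(\partial_l\Psi^j)$: this is uniformly elliptic and lies in $\chpbt$ by the Leibniz rule in variable H\"older spaces already used in the proof of Theorem~\ref{sches}, since $D\Psi\in C^{1,\alpha^+}$ and $C^{\alpha^+}\subset\chpbt$ on bounded sets. The lower-order terms acquire an additional summand $a^{kl}\,\partial_{kl}\Psi^i$, and here one needs $\partial^2\Psi\in C^{\alpha^+}$ — that is, $\partial\Omega\in C^{2,\alpha^+}$ — in order to keep $\tilde b^i\in\chpbt$ with controlled norm; the zeroth-order coefficient and $\tilde f$ are simply composed with $\Psi^{-1}$. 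Because $\Psi$ alters only the spatial variables and is bi-Lipschitz with constants bounded in $P$, the metric $d$ is distorted by a bounded factor, so $\tilde\alpha$ is log-H\"older with $c_{\log}(\tilde\alpha)\le c_{\log}(\alpha)+C$ and $\tilde\alpha^{\pm}=\alpha^{\pm}$. Theorem~\ref{thschesbrz} then gives
\begin{align*}
|\tilde u|^*_{2,\ac,\widetilde\Omega_T\cup\Gamma}\le C\left(|\tilde u|_{0,\widetilde\Omega_T}+|\tilde f|^{(2)}_{0,\ac,\widetilde\Omega_T\cup\Gamma}\right)\le C\left(|u|_{0,\ot}+|f|_{0,\ac,\ot}\right),
\end{align*}
where in the last inequality the weights in $|\cdot|^{(2)}$ are bounded above and the variable H\"older norms are preserved, up to constants, under the $C^{2,\alpha^+}$ change of variables.

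Finally I would localise: near $\Psi(P)$, which lies in the relative interior of $\Gamma$, the weight $\dist{\cdot}{\gt\setminus\Gamma}$ is bounded below by some $c_0>0$, so on $B(\Psi(P),c_0/2)$ the starred seminorms dominate the ordinary ones and the displayed estimate yields $|\tilde u|_{2,\ac,\widetilde\Omega_T\cap B(\Psi(P),c_0/2)}\le C(|u|_{0,\ot}+|f|_{0,\ac,\ot})$. Pulling this back by $\Psi$ — using once more that a $C^{2,\alpha^+}$ diffeomorphism that is the identity in $t$ preserves $\chp{2}{1}$ norms up to constants, as in the elliptic case treated in~\cite{bies1,bies} — produces the asserted bound on $\ot\cap B(P,\delta)$ for a suitable $\delta>0$, and $\delta$ can be chosen uniformly in $P$ thanks to the compactness of $\partial\Omega$ and the uniform bounds on the maps $\Psi$. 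I expect the principal obstacle to be exactly this change-of-variables bookkeeping — showing that $\widetilde L$ and $\tilde\alpha$ meet the hypotheses of Theorem~\ref{thschesbrz} with constants uniform in $P$, in particular that the Hessian of the boundary-flattening map enters only as an admissible $\chpbt$ lower-order coefficient, which is precisely the role of the $C^{2,\alpha^+}$ hypothesis on $\Omega$.
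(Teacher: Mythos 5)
Your argument follows the paper's own route: flatten the boundary with a $C^{2,\alpha^+}$ diffeomorphism that acts only on the spatial variable, transform the operator, exponent and data, check that the hypotheses of Theorem~\ref{thschesbrz} survive the change of variables, pull the resulting half-space estimate back, and invoke compactness of the parabolic boundary to get a uniform $\rho$. The only differences are cosmetic — you treat the flat-bottom case $P\in\Omega\times\{0\}$ separately and spell out why the Hessian of the flattening map forces $\partial\Omega\in C^{2,\alpha^+}$ — but the substance of the proof coincides with the paper's.
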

\begin{proof}
Let us take $P=(x_0,t_0)\in\gt$. Because the boundary of $\Omega$ is of class $C^{2,\alpha^+}$, so we 
 have $\delta >0$ and injective mapping $\Phi\colon B=B(P,\delta)\to D \subset\R^n\times\R_+$ of class $C^{2,\alpha^+}$ such that the following conditions are valid 
\begin{align*}
\Phi\left(B\cap \ot\right)\subset D\cap\left(\R_+^n\times\R_+\right),\qquad
\Phi(B\cap \gt)\subset D\cap\left(\{x_n=0\}\times\R_+\cup\R^n_+\times\{0\}\right).
\end{align*}

Now, we transform our equation. It is similar path as in the proof of the analogous Lemma for elliptic equations (see the proof of Lemma 4.4 in \cite{bies1}).
Let us denote $B'=B\cap \ot$, $D'=D\cap \left(\R_+^n\times\R_+\right)$, $\Gamma=B\cap\gt$ and $\Gamma'=\Phi(\Gamma)$ . Next, we define $\tilde{u}=u\circ\Phi^{-1}$ and $\tilde{f}=f\circ\Phi^{-1}$ on the set $D'$. Then $\tilde{u}$ satisfies the equation
\begin{align*}
\tilde{u}_t-\widetilde{L}\tilde{u}=\tilde{u}_t-\left(\tilde{a}^{ij}D_{ij}\tilde{u}+\tilde{b}^iD_i\tilde{u}+\tilde{c}\tilde{u}\right)=\tilde{f},
\end{align*}
where
\begin{eqnarray*}
\tilde{a}^{ij}=\left(\sum_{k,l=1}^na^{kl}D_l\Phi^jD_k\Phi^i\right)\circ\Phi^{-1}, \quad 
\tilde{b}^i=\left(\sum_{k,l=1}^nD_{lk}\Phi^ia^{lk}+\sum_{k=1}^nb^kD_k\Phi^i\right)\circ\Phi^{-1},\quad
\tilde{c}=c\circ\Phi^{-1}.
\end{eqnarray*}
It is easy to see that there exists constant $K>0$ such that
\begin{align*}
K^{-1}d(X,Y)\leq d(\Phi(X),\Phi(Y))\leq Kd(X,Y) \textrm{ for } X,Y\in B(P,\delta).
\end{align*}
Thus, we obtain
\begin{equation}\label{pominbrz1}
\begin{split}
C_2|v|^*_{k,{\alpha}(\cdot),B'}\leq|\tilde{v}|^*_{k,\tilde{\alpha}(\cdot),D'}\leq C_1|v|^*_{k,{\alpha}(\cdot),B'}\\
C_2|v|^{(l)}_{k,\alpha(\cdot),B'}\leq|\tilde{v}|^{(l)}_{k,\tilde{\alpha}(\cdot),D'}\leq C_1|v|^{(l)}_{k,\alpha(\cdot),B'},
\end{split}
\end{equation}
and
\begin{equation}\label{pominbrz}
\begin{split}
C_2|v|^*_{k,\alpha(\cdot),B'\cup \Gamma}\leq|\tilde{v}|^*_{k,\tilde{\alpha}(\cdot),D'\cup \Gamma'}\leq C_1|v|^*_{k,\alpha(\cdot),B'\cup \Gamma}\\
C_2|v|^{(l)}_{k,\alpha(\cdot),B'\cup \Gamma}\leq|\tilde{v}|^{(l)}_{k,\tilde{\alpha}(\cdot),D'\cup{\Gamma}'}\leq C_1|v|^{(l)}_{k,\alpha(\cdot),B'\cup \Gamma},
\end{split}
\end{equation}
for $k, l=0,1, 2, 3,\ldots $, where $v$ is a certain function, $\tilde{v}=v\circ\Phi^{-1}$ and $\tilde{\alpha}=\alpha\circ\Phi^{-1}$. 

Hence, we see that 
\begin{align*}
|\tilde{a}^{ij}|_{0,\tilde{\alpha}(\cdot),D'},|\tilde{b}^i|_{0,\tilde{\alpha}(\cdot),D'},|\tilde{c}|_{0,\tilde{\alpha}(\cdot),D'}\leq \widetilde{\Lambda}=C\Lambda.
\end{align*}
Thus, by virtue of Theorem \ref{thschesbrz} we get
\begin{equation*}
|\tilde{u}|^*_{2,\tilde{\alpha}(\cdot), D'\cup \Gamma'}\leq C\left(|\tilde{u}|_{0,D'}+|\tilde{f}|_{0,\tilde{\alpha}(\cdot),D'\cup \Gamma'}^{(2)}\right).
\end{equation*}
Therefore, from (\ref{pominbrz1}) and (\ref{pominbrz}) we obtain
\begin{align}\label{schesexin1}
\begin{split}
|u|^*_{2,\alpha(\cdot),B'\cup \Gamma}&\leq |\tilde{u}|^*_{2,\tilde{\alpha}(\cdot), D'\cup \Gamma'}\leq C\left(|\tilde{u}|_{0,D'}+|\tilde{f}|_{0,\tilde{\alpha}(\cdot),D'\cup\Gamma'}^{(2)}\right)\leq \\
 &C\left(|u|_{0,B'}+|f|^{(2)}_{0,\alpha(\cdot),B' \cup \Gamma'}\right)\leq  C\left(|u|_{0,B'}+|f|_{0,\alpha(\cdot),B'}\right) \leq C\left(|u|_{0,\Omega}+|f|_{0,\alpha(\cdot),\Omega}\right).
 \end{split}
\end{align}
We denote $B''=B\left(P,\frac{\delta}{2}\right)\cap\Omega$. We see that $\bar{d}_X, \bar{d}_{X,Y}\geq \frac{\delta}{2}$ for all $X, Y\in B''$ and thus we conclude
\begin{align*}
C(\delta)|u|_{2,\alpha(\cdot),B''}\leq|u|^*_{2,\alpha(\cdot), B'\cup \Gamma}.
\end{align*}
According to inequality (\ref{schesexin1}), we have
\begin{align}\label{schesexin2}
|u|_{2,\alpha(\cdot),B''} \leq C(\delta)\left(|u|_{0,\ot}+|f|_{0,\alpha(\cdot),\ot}\right).
\end{align}

We denote $\rho_P=\delta/4$.
Now, let us take the covering  $\{B(P,{\rho_P})\}_{P\in\mathcal{G}}$ of the set $\gt$. 
Since this set is compact, we can take a finite cover  $\{B(P_i,{\rho_{P_i}}\}_{i=1}^{N}$ of $\gt$. Let $\rho=\min{\rho_{P_i}}$. 
If we take an arbitrary $X\in\gt$, then $X\in B(P_i, {\rho_{P_i}})$ for some $i$. It is easy to see that $B(X,\rho)\subset B(P_i, 2{\rho_{P_i}})$ 
and since for $B''=B(P_i,2{\rho_i})\cap\ot$ inequality (\ref{schesexin2}) holds. Thus, the proof follows.
\end{proof}

The next theorem is the main result in this section.
\begin{theo}
Let us assume that $\Omega$ is a set of class $C^{2,\alpha^+}$. 
If $u\in \chp{2}{1}(\otb)$ satisfies
\begin{align*}
u_t-Lu={}&f\textrm{ on }\ot,\\
u={}& \varphi\textrm{ on }\gt,
\end{align*}
where $f\in C^{\alpha(\cdot)}(\otb)$, $\varphi\in C^{2,\ac}(\otb)$ and there are positive constants $\lambda$ and $\Lambda$ such that
\begin{align*}
&a^{ij}(x,t)\zeta^i\zeta^j\geq\lambda|\zeta|^2\qquad\textrm{for all $x\in\Omega$ and $0\leq t<T$ and for all $\zeta\in\mathbb{R}^n$,}\\
&|a^{ij}|_{0,\alpha(\cdot),\ot},|b^i|_{0,\alpha(\cdot),\ot},|c|_{0,\alpha(\cdot),\ot}\leq \Lambda,
\end{align*} then the following inequality
\begin{equation*}
|u|_{2,\alpha(\cdot), \Omega_T}\leq C\left(|u|_{0, \ot}+|f|_{0,\alpha(\cdot), \ot}+|\varphi|_{2,\ac,\ot}\right),
\end{equation*}
is satisfied and $C=C(\textup{diam}(\Omega), T, n, \alpha^-, \alpha^+, c_{\log}(\alpha), \Lambda, \lambda)$.
\end{theo}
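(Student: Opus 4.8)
The plan is to remove the boundary datum and then glue the interior estimate of Theorem~\ref{sches} to the boundary estimate of Lemma~\ref{brzglscheslem} by a finite covering argument. First I would put $w:=u-\varphi$. Then $w\in\chp{2}{1}(\otb)$, $w=0$ on $\gt$, and $w$ solves
\begin{equation*}
w_t-Lw=\hat f:=f+L\varphi-\varphi_t\in\chpbt(\otb),
\end{equation*}
and, by the Leibniz rule in variable H\"older spaces used in the proof of Theorem~\ref{sches} together with the bounds on the coefficients, $|\hat f|_{0,\ac,\ot}\le C\bigl(|f|_{0,\ac,\ot}+|\varphi|_{2,\ac,\ot}\bigr)$ while $|w|_{0,\ot}\le|u|_{0,\ot}+|\varphi|_{2,\ac,\ot}$. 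Hence it suffices to prove $|w|_{2,\ac,\ot}\le C\bigl(|w|_{0,\ot}+|\hat f|_{0,\ac,\ot}\bigr)$ and then pass back to $u=w+\varphi$. Since $\ot$ is bounded, $d_P=\min\{t,\dist{\partial\Omega}{x}\}$ is bounded above, so the weighted coefficient norms $|a^{ij}|^{(0)}_{0,\ac,\ot}$, $|b^i|^{(1)}_{0,\ac,\ot}$, $|c|^{(2)}_{0,\ac,\ot}$ and the weighted norm $|\hat f|^{(2)}_{0,\ac,\ot}$ are controlled by $\Lambda$, resp.\ $|\hat f|_{0,\ac,\ot}$, times a constant depending only on $\diam{\Omega}$ and $T$; in particular Theorem~\ref{sches} is applicable to $w$.

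By Lemma~\ref{brzglscheslem} there is $\rho>0$ with $|w|_{2,\ac,\,\ot\cap B(P,\rho)}\le C\bigl(|w|_{0,\ot}+|\hat f|_{0,\ac,\ot}\bigr)$ for every $P\in\gt$. Using compactness I would cover $\gt$ by finitely many $B(P_i,\rho/2)$, $i=1,\dots,N$, and set $\mathcal N:=\{X\in\otc:\dist{X}{\gt}<\rho/4\}$; since $\mathcal N\subset\bigcup_iB(P_i,\rho)$, the bound above controls $w$, $Dw$, $D^2w$ on $\mathcal N$ and the semi-norm of $D^2w$ over any pair of points lying in one $\ot\cap B(P_i,\rho)$. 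The complement $K:=\otc\setminus\mathcal N$ is a compact subset of $\ot\cup(\Omega\times\{T\})$ on which $d_P$ is bounded below by some $c_1=c_1(\rho,T)>0$; applying Theorem~\ref{sches} to $w$ gives $|w|^{*}_{2,\ac,\ot}\le C\bigl(|w|_{0,\ot}+|\hat f|_{0,\ac,\ot}\bigr)$, and because all the weights in this seminorm are $\ge\min(c_1^2,c_1^3)$ on $K$, this controls $w$, $Dw$, $D^2w$ on $K$ and, after trading the exponent $\alpha(P)$ appearing in $[\,\cdot\,]^{*}_{2,\ac,\ot}$ for the exponent $\alpha(Q)$ of $[\,\cdot\,]_{\ac,\ot}$ at the cost of the factor $d(P,Q)^{\alpha(P)-\alpha(Q)}\le e^{c_{\log}(\alpha)}$, also the semi-norm of $D^2w$ over pairs in $K$.

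Combining the two steps bounds $|w|_{0,\ot}$, $|Dw|_{0,\ot}$ and $|D^2w|_{0,\ot}$ by $M:=C\bigl(|w|_{0,\ot}+|\hat f|_{0,\ac,\ot}\bigr)$; it remains to bound $[D^2w]_{\ac,\ot}=\sup_{P\neq Q}|D^2w(P)-D^2w(Q)|/d^{\alpha(Q)}(P,Q)$. Because $X\mapsto\dist{X}{\gt}$ is $1$-Lipschitz for the metric $d$, for a pair with $d(P,Q)$ smaller than a fixed fraction of $\rho$ the two points either both lie in $K$ or both lie in a common boundary chart $\ot\cap B(P_i,\rho)$, and in either case the estimates of the previous paragraph bound the quotient by $M$ (this is the role of a Lebesgue number of the cover $\{B(P_i,\rho)\}_i$ together with an open set containing $K$ whose closure avoids $\gt$). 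For $d(P,Q)\ge\tfrac{\rho}{8}$ the quotient is at most $2|D^2w|_{0,\ot}\,(\rho/8)^{-\alpha^+}\le C(\rho)M$, using $\rho/8\le d(P,Q)\le\diam{\ot}$ and $0<\alpha^-\le\alpha(Q)\le\alpha^+<1$. Hence $|w|_{2,\ac,\ot}\le M$, and inserting the bounds for $|w|_{0,\ot}$ and $|\hat f|_{0,\ac,\ot}$ from the first paragraph and returning to $u=w+\varphi$ yields the assertion with $C=C(\diam{\Omega},T,n,\alpha^-,\alpha^+,c_{\log}(\alpha),\Lambda,\lambda)$.

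The genuinely delicate point is this last gluing of the $D^2$-H\"older semi-norm across the interface $\partial\mathcal N$: one has to organise the covering so that every pair of points at small parabolic distance lies in a single chart on which one of the two local estimates applies, and one has to pass between the exponent $\alpha(P)$ of the weighted interior seminorm in Theorem~\ref{sches} and the exponent $\alpha(Q)$ of $[\,\cdot\,]_{\ac,\ot}$, which is done via the log-H\"older continuity of $\alpha$, in the same spirit as the inequality $C_1d^{\alpha(Q)}\le d^{\alpha(P)}\le C_2d^{\alpha(Q)}$ used in the proof of Theorem~\ref{lem1}. The remaining ingredients --- the reduction to $\varphi=0$, the use of boundedness of $\ot$ to replace weighted norms by ordinary ones, and the extraction of a finite subcover --- are routine.
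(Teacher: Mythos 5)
Your proof is correct and follows essentially the same route as the paper: reduce to $\varphi=0$ by linearity, obtain the bound near the parabolic boundary from Lemma~\ref{brzglscheslem}, obtain it on the interior from Theorem~\ref{sches} (using that $d_P$ is bounded below away from $\gt$), and glue the two by a covering argument, with pairs at large parabolic distance handled by the trivial bound $|D^2w(P)-D^2w(Q)|/d^{\alpha(Q)}(P,Q)\le 2|D^2w|_{0,\ot}\,d(P,Q)^{-\alpha(Q)}$. The paper organizes the gluing as three explicit cases for a pair $(P,Q)$ (both in a boundary ball, both with $d_{P,Q}>\delta/2$, or one near the boundary with the other forced far away) rather than your boundary strip $\mathcal N$ / Lebesgue-number phrasing, but these are the same covering argument in different clothing; the exponent swap $\alpha(P)\leftrightarrow\alpha(Q)$ via log-H\"older continuity that you flag is exactly what the paper invokes as well.
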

\begin{proof}
First, let us note that we can take $\varphi=0$, because the equation is linear.

Let $\delta>0$ be such as in Lemma \ref{brzglscheslem}. 
Let $P,Q\in\ot$ be arbitrary points. We consider three cases.
\begin{enumerate}
\item If $P,Q\in B(X,\delta)$ for certain $X\in \gt$, then by Lemma \ref{brzglscheslem}  we get
\begin{align}\label{in1902}
\frac{|D^2u(P)-D^2u(Q)|}{d^{\alpha(P}(P,Q)}\leq C\left(|u|_{0,\ot}+|f|_{0,\ac,\ot}\right).
\end{align}
\item When $d_{P,Q}>\delta/2$, then we apply Theorem \ref{sches}. Indeed, we have
\begin{align*}
d_{P,Q}^{\alpha(P)}\frac{|D^2u(P)-D^2u(Q)|}{d^{\alpha(P)}(P,Q)}\leq C\left(|u|_{0,\ot}+|f|^{(2)}_{0,\ac,\ot}\right)\leq \left(|u|_{0,\ot}+|f|_{0,\ac,\ot}\right).
\end{align*}
We estimate from below $d_{P,Q}$, what yields
\begin{align*}
\textnormal{min}\left\{\left(\frac{\delta}{2}\right)^{\alpha^+},\left(\frac{\delta}{2}\right)^{\alpha^-}\right\}\frac{|D^2u(P)-D^2u(Q)|}{d^{\alpha(P)}(P,Q)}\leq C\left(|u|_{0,\ot}+|f|_{0,\ac,\ot}\right).
\end{align*}
\item If $d_P<\delta/2$ and there does not exist $X\in\gt$ such that $P,Q\in B(X,\delta)$. Then, there exists $P\in\gt$ such that $P\in B(X,\delta/2)$. Thus, we have
\begin{align*}
d(P,Q)\leq d(Q,X)-d(X,P)>\delta-\delta/2=\delta/2.
\end{align*}
Hence, we obtain
\begin{align*}
\frac{|D^2u(P)-D^2u(Q)|}{d^{\alpha(P}(P,Q)}\leq \textnormal{max}\left\{\left(\frac{2}{\delta}\right)^{\alpha^-},\left(\frac{2}{\delta}\right)^{\alpha^+}\right\}\left(|D^2u(P)|+|D^2u(Q)|\right).
\end{align*}
We again use Lemma \ref{brzglscheslem} and Theorem \ref{sches} and we finally get the same  inequality as in (\ref{in1902}).
\end{enumerate}
\end{proof}
Next theorem is a simple consequence of the the previous one. 
\begin{theo}\label{mainl}
Let us assume that $\Omega$ is a set of class $C^{2,\alpha^+}$. 
If $u\in \chp{2}{1}(\otb)$ satisfies
\begin{align*}
u_t-Lu={}&f\textrm{ on }\ot,\\
u={}& \varphi\textrm{ on }\gt,
\end{align*}
where $f\in C^{\alpha(\cdot)}(\otb)$, $\varphi\in C^{2,\ac}(\otb)$ and there are positive constants $\lambda$ and $\Lambda$ such that
\begin{align*}
&a^{ij}(x,t)\zeta^i\zeta^j\geq\lambda|\zeta|^2\qquad\textrm{for all $x\in\Omega$ and $0\leq t<T$ and for all $\zeta\in\mathbb{R}^n$,}\\
&|a^{ij}|_{0,\alpha(\cdot),\ot},|b^i|_{0,\alpha(\cdot),\ot},|c|_{0,\alpha(\cdot),\ot}\leq \Lambda,
\end{align*} then the following inequality
\begin{equation*}
|u|_{2,1,\alpha(\cdot), \Omega_T}\leq C\left(|u|_{0, \ot}+|f|_{0,\alpha(\cdot), \ot}+|\varphi|_{2,\ac,\ot}\right),
\end{equation*}
is satisfied and $C=C(\textup{diam}(\Omega), T, n, \alpha^-, \alpha^+, c_{\log}(\alpha), \Lambda, \lambda)$.
\end{theo}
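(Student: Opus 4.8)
The plan is to deduce Theorem \ref{mainl} from the preceding global estimate on $|u|_{2,\alpha(\cdot),\Omega_T}$ by simply using the equation to bound the time derivative $u_t$. The only quantity appearing in $|u|_{2,1,\alpha(\cdot),\Omega_T}$ that is not already controlled by $|u|_{2,\alpha(\cdot),\Omega_T}$ is the term $|u_t|_{0,\alpha(\cdot),\Omega_T} = |u_t|_{0,\Omega_T} + [u_t]_{0,\alpha(\cdot),\Omega_T}$. Since $u_t = Lu - f = a^{ij}D_{ij}u + b^i D_i u + cu - f$, it suffices to estimate the right-hand side in the $C^{\alpha(\cdot)}(\overline{\Omega}_T)$ norm.

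\textbf{Step 1.} As in the previous theorem, reduce to $\varphi = 0$ by linearity (subtract off $\varphi$ and absorb the resulting terms into $f$ and the boundary data, which only changes constants). Apply the previous theorem to obtain
\begin{equation*}
|u|_{2,\alpha(\cdot),\Omega_T} \leq C\left(|u|_{0,\Omega_T} + |f|_{0,\alpha(\cdot),\Omega_T} + |\varphi|_{2,\alpha(\cdot),\Omega_T}\right).
\end{equation*}
In particular $|u|_{0,\Omega_T}$, $|Du|_{0,\Omega_T}$, $|D^2 u|_{0,\alpha(\cdot),\Omega_T}$ and the lower-order H\"older seminorms of $u$, $Du$ are all bounded by the right-hand side.

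\textbf{Step 2.} Estimate $u_t$ via the equation. Use the algebra/Leibniz property of the $C^{\alpha(\cdot)}$ norm, namely $|gh|_{0,\alpha(\cdot),\Omega_T}\leq C|g|_{0,\alpha(\cdot),\Omega_T}|h|_{0,\alpha(\cdot),\Omega_T}$ (this is the Leibniz rule used repeatedly in Section \ref{bousec}; it follows from $[gh]_{\alpha(R),R,N}\le |g(R)|[h]_{\alpha(R),R,N}+|h|_{0,N}[g]_{\alpha(R),R,N}$ together with the uniform comparability $C_1 d^{\alpha(Q)}\le d^{\alpha(P)}\le C_2 d^{\alpha(Q)}$). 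This gives
\begin{equation*}
|u_t|_{0,\alpha(\cdot),\Omega_T} \leq \sum_{i,j}|a^{ij}|_{0,\alpha(\cdot),\Omega_T}|D_{ij}u|_{0,\alpha(\cdot),\Omega_T} + \sum_i |b^i|_{0,\alpha(\cdot),\Omega_T}|D_i u|_{0,\alpha(\cdot),\Omega_T} + |c|_{0,\alpha(\cdot),\Omega_T}|u|_{0,\alpha(\cdot),\Omega_T} + |f|_{0,\alpha(\cdot),\Omega_T}.
\end{equation*}
By the hypotheses the coefficient norms are bounded by $\Lambda$, and each factor $|D_{ij}u|_{0,\alpha(\cdot),\Omega_T}$, $|D_i u|_{0,\alpha(\cdot),\Omega_T}$, $|u|_{0,\alpha(\cdot),\Omega_T}$ is dominated by $|u|_{2,\alpha(\cdot),\Omega_T}$ (for the first-order and zeroth-order H\"older seminorms one uses that $\Omega_T$ has finite diameter, so e.g. $[D_i u]_{0,\alpha(\cdot),\Omega_T}\le C\,|D^2 u|_{0,\alpha(\cdot),\Omega_T}$ by a standard interpolation/mean-value argument, or directly the interpolation inequalities of Appendix \ref{apint}). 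Hence $|u_t|_{0,\alpha(\cdot),\Omega_T}\leq C(\Lambda)\,|u|_{2,\alpha(\cdot),\Omega_T} + |f|_{0,\alpha(\cdot),\Omega_T}$.

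\textbf{Step 3.} Combine. Since
\begin{equation*}
|u|_{2,1,\alpha(\cdot),\Omega_T} = |u|_{0,\Omega_T}+|Du|_{0,\Omega_T}+|D^2 u|_{0,\alpha(\cdot),\Omega_T}+|u_t|_{0,\alpha(\cdot),\Omega_T} \leq C|u|_{2,\alpha(\cdot),\Omega_T} + |f|_{0,\alpha(\cdot),\Omega_T},
\end{equation*}
plugging in the bound from Step 1 yields
\begin{equation*}
|u|_{2,1,\alpha(\cdot),\Omega_T} \leq C\left(|u|_{0,\Omega_T}+|f|_{0,\alpha(\cdot),\Omega_T}+|\varphi|_{2,\alpha(\cdot),\Omega_T}\right),
\end{equation*}
with $C$ depending only on the stated quantities. \emph{I expect no genuine obstacle here:} the theorem is labelled "a simple consequence" and indeed the only thing to watch is that multiplying two variable-exponent H\"older functions stays in the same class with a controlled constant — that is exactly the Leibniz rule already established and used in Section \ref{bousec}, so the proof is essentially a one-line invocation of the equation plus that product estimate.
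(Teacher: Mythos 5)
Your argument is correct and is exactly the intended ``simple consequence'' the paper has in mind (the paper gives no proof): use the preceding global estimate to control $|u|_{0,\ot}+|Du|_{0,\ot}+|D^2u|_{0,\ac,\ot}$, then use the equation $u_t=a^{ij}D_{ij}u+b^iD_iu+cu-f$ together with the Leibniz rule for the variable H\"older norm and the interpolation inequalities of Appendix~\ref{apint} to bound $|u_t|_{0,\ac,\ot}$. One small remark: the reduction to $\varphi=0$ at the start of Step~1 is both unnecessary and slightly dangerous if taken literally --- after setting $w=u-\varphi$ the new right-hand side contains $\varphi_t$, which is not controlled by $|\varphi|_{2,\ac,\ot}$ since $C^{2,\ac}$ carries no time-derivative regularity; fortunately you never actually use that reduction, as you immediately invoke the previous theorem in its stated form with $\varphi$ present, so the proof as written stands.
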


\section{Existence of solutions}\label{molsec}

In this section we shall prove the following Kellogg's type theorem.
\begin{theo}\label{finth}
Let $\Omega\subset\mathbb{R}^n$ be an open and bounded set with the boundary of class $C^{2,\alpha^+}$ and $T>0$.
Let $u_t-Lu=u_t-\left(a^{ij}D_{ij}u+b^i D_iu+cu\right)$ be a operator satisfying
\begin{align*}
&a^{ij}(x,t)\zeta^i\zeta^j\geq\lambda|\zeta|^2\qquad\textrm{for all $x\in\Omega$ and $0\leq t<T$ and for all $\zeta\in\mathbb{R}^n$,}
\end{align*} with 
coefficients in $C^{\alpha(\cdot)}(\bar{\Omega})$ and  $c\geq 0$. If $f\in C^{\alpha(\cdot)}(\otc)$, $\phi\in \chp{2}{1}(\otc)$ and $\phi_t-L\phi=f$ on $\partial\Omega\times\{0\}$,
then the problem 
\begin{align*}
\begin{cases}
u_t-L u=f & \text{in} \ \ot,\\
u=\phi & \text{on} \ \gt,
\end{cases}
\end{align*}
 has a unique solution $u\in \chp{2}{1}(\otc)$.
\end{theo}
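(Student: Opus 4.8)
The plan is to reduce to homogeneous boundary data and then run the classical method of continuity, the needed a priori bound being precisely the global Schauder estimate of Theorem~\ref{mainl}. First I would set $v=u-\phi$: since the problem is linear and $\phi\in\chp{2}{1}(\otc)$, producing $u$ is equivalent to producing $v\in\chp{2}{1}(\otc)$ with $v_t-Lv=F$ in $\ot$ and $v=0$ on $\gt$, where $F:=f-(\phi_t-L\phi)$. The coefficients lie in $C^{\alpha(\cdot)}(\otc)$ and products of $C^{\alpha(\cdot)}$ functions satisfy the Leibniz-type estimate $[gh]_{\alpha(\cdot)}\le|g|_0[h]_{\alpha(\cdot)}+|h|_0[g]_{\alpha(\cdot)}$ already used in the proof of Theorem~\ref{sches}, so $F\in C^{\alpha(\cdot)}(\otc)$, and the compatibility hypothesis $\phi_t-L\phi=f$ on $\partial\Omega\times\{0\}$ forces $F=0$ there. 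Uniqueness of $u$ follows at once from the parabolic maximum principle, applicable since $c\ge0$: a difference of two solutions solves the homogeneous problem with zero data on $\gt$ and hence vanishes; the same principle also gives the bound $|v|_{0,\ot}\le C|F|_{0,\ot}$.

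Next I would fix the functional framework. Let $\mathcal{X}=\{v\in\chp{2}{1}(\otc):v=0\text{ on }\gt\}$ and $\mathcal{Y}=\{g\in C^{\alpha(\cdot)}(\otc):g=0\text{ on }\partial\Omega\times\{0\}\}$, both Banach spaces in the inherited norms, and for $\tau\in[0,1]$ set $L_\tau=(1-\tau)\Delta+\tau L$, so $L_0=\Delta$ and $L_1=L$; the coefficients of $L_\tau$ satisfy the ellipticity and boundedness assumptions of Theorem~\ref{mainl} uniformly in $\tau$, and the zero-order term $\tau c$ remains nonnegative. For $v\in\mathcal{X}$ one has $v_t-L_\tau v\in\mathcal{Y}$, since at a corner point $(x_0,0)$ with $x_0\in\partial\Omega$ the function $v$ and its first and second space derivatives and its time derivative all vanish; thus $T_\tau\colon v\mapsto v_t-L_\tau v$ is a bounded linear operator $\mathcal{X}\to\mathcal{Y}$. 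Applying Theorem~\ref{mainl} to $L_\tau$ with zero boundary data and absorbing $|v|_{0,\ot}$ via the maximum-principle bound above yields the uniform estimate
\[
|v|_{2,1,\alpha(\cdot),\ot}\le C\,|T_\tau v|_{0,\alpha(\cdot),\ot}\qquad(v\in\mathcal{X}),
\]
with $C$ independent of $\tau$; in particular every $T_\tau$ is injective with closed range.

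It then remains to treat the base case $\tau=0$, i.e.\ to show that $T_0$ maps $\mathcal{X}$ onto $\mathcal{Y}$. Given $F\in\mathcal{Y}$, note that $C^{\alpha(\cdot)}(\otc)$ embeds into the classical parabolic Hölder space of constant exponent $\alpha^-$, so $F$ is classically Hölder continuous and vanishes on $\partial\Omega\times\{0\}$; classical parabolic Schauder theory \cite{fried,lieb,krylov} then provides a classical solution $v_0\in C^{2,1}(\otc)$ of $(v_0)_t-\Delta v_0=F$ with $v_0=0$ on $\gt$. The interior, boundary and global estimates of Sections~\ref{intsec}--\ref{glosec} are built on the Green representation formula and use only that the solution is of class $C^{2,1}$ and that the right-hand side lies in $C^{\alpha(\cdot)}$; they therefore promote $v_0$ to $\chp{2}{1}(\otc)$, so $v_0\in\mathcal{X}$ and $T_0$ is onto. (Equivalently one may mollify $F$, solve for smooth data, and pass to the limit using the uniform estimate and lower semicontinuity of the variable Hölder seminorm, the log-Hölder continuity of $\alpha$ controlling the seminorms of the regularizations.) By the continuation principle for families of operators obeying a uniform a priori estimate \cite{gt}, surjectivity of $T_0$ implies surjectivity of $T_1$, which provides $v\in\mathcal{X}$ solving the reduced problem; then $u=v+\phi\in\chp{2}{1}(\otc)$ solves the original one.

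The sensitive step is the base case: it requires both solvability of the heat problem and the point that a mere $C^{2,1}$ solution with $C^{\alpha(\cdot)}$ right-hand side is automatically of class $\chp{2}{1}$ — that is, that the estimates of the earlier sections do not genuinely need a priori membership in the variable Hölder class. Should one wish to sidestep this, the mollification argument must be carried out instead, and then the careful part is convolution in variable Hölder spaces, handled through the log-Hölder condition; that is where I expect the bulk of the effort.
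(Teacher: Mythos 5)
Your overall architecture matches the paper's: reduce to homogeneous boundary data by subtracting $\phi$, get uniqueness and the sup bound from the maximum principle (valid since $c\ge 0$), set up the family $T_\tau$, use Theorem~\ref{mainl} for a $\tau$-uniform a priori bound, and then invoke the method of continuity. That part is correct and is exactly how the paper proceeds. The issue is the base case $\tau=0$, which you yourself flag as the sensitive step — and indeed this is where the paper spends all of its effort and where your proposal currently has a genuine gap.

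Your primary route (``promotion'') is not supported by the theorems as stated. Theorems~\ref{lem1}, \ref{lem2}, \ref{sches}, \ref{schesbrz}, \ref{schesbrzog}, \ref{thschesbrz} and \ref{mainl} all take $u\in\chp{2}{1}$ as a hypothesis; they are a~priori estimates, not regularity-upgrading statements, and the paper never claims that a mere $C^{2,1}$ solution of $v_t-\Delta v=F$ with $F\in C^{\alpha(\cdot)}$ lies in $\chp{2}{1}$. You would have to re-prove the interior and boundary estimates under the weaker hypothesis $u\in C^{2,1}$ and verify that nothing in those proofs uses the a~priori variable-H\"older membership. That is plausible but it is real, unverified work, not a corollary you can cite.

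Your fallback (mollify and pass to the limit) is the paper's actual route, but it is not a minor detail: the paper proves two dedicated lemmas precisely because naive mollification fails. Lemma~\ref{extlem} first extends $f$ and the exponent $\alpha$ log-H\"older-continuously to a strictly larger parabolic cylinder $\ots$ (otherwise convolution near $\gt$ has no data to average), and Lemma~\ref{molem} then shows that, after shrinking the exponent by a small $\delta_m>0$, one has $|f_{\epsilon_m}|_{0,\alpha(\cdot)-\delta_m,\ot}\le 3|f|_{0,\alpha(\cdot),\ots}$. Crucially, mollification is controlled only in the slightly weaker space $C^{\alpha(\cdot)-\delta_m}$, not in $C^{\alpha(\cdot)}$ itself, so the paper solves a sequence of approximating problems in $C^{2,1,\alpha(\cdot)-\delta_m}$, extracts a $C^{2,1,\gamma}$-limit by Arzel\`a--Ascoli, and only at the end upgrades the limit to $\chp{2}{1}$ by letting $m\to\infty$ in the seminorm bound. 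Your sketch does not introduce the extension step, does not introduce the exponent loss $\delta_m$, and does not explain how the limit regains the full variable exponent; these are exactly the nontrivial ingredients. So while you correctly locate where the difficulty lives (``convolution in variable H\"older spaces, handled through the log-H\"older condition''), the proposal as written does not actually close the base case, and the route you lean toward most (``promotion'') is not justified by any stated result in the paper.
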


 First, we prove an extension lemma for H\"older functions. For given $\ot$ and $\sigma>0$ we define a set
\begin{align*}
\ots=\left\{X\in\R^{n+1}\colon \dist{X}{\ot}<\sigma\right\},
\end{align*}
where the distance is calculated in the metric defined in (\ref{metr}).
\begin{lem}\label{extlem}
Let $\Omega\subset\mathbb{R}^n$ be an open and bounded set with the boundary of class $C^2$ and let $T>0$. 
Then, there exists $\sigma>0$ such that there exists $\bar{\alpha}\in\mathcal{A}^{\log}(\ots)$  with $\bar{\alpha}|_{\ot}=\alpha$, $\bar{\alpha}^+=\alpha^+$ ,
 $\bar{\alpha}^-=\alpha^-$ such that for any $f\in C^{\alpha({\cdot})}(\otc)$, there exists 
$\bar{f}\in C^{\bar{\alpha}(\cdot)}(\otsc)$ satisfying $\bar{f}|_{\ot}=f$. Moreover, there exists a constant $C=C(\Omega, n, \alpha^-, \alpha^+, c_{\log}(\alpha),T)$ such that the inequality
\begin{align}\label{extin}
|\bar{f}|_{0,\bar{\alpha}(\cdot),\ots}\leq C|f|_{0,\alpha(\cdot),\ot}
\end{align}
holds and $C=C(\Omega,n,T,c_{\log}(\alpha), \alpha^+,\alpha^-)$.
\end{lem}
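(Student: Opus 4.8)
The plan is to produce the extension in two independent stages — first past the time levels $t=0$ and $t=T$, by reflection in the time variable, and then past the lateral boundary $\partial\Omega\times(0,T)$, by rerunning the purely spatial extension Lemma~\ref{extlemsch} with the time variable treated as an inert parameter — and then to restrict to $\ots$, which for small $\sigma$ sits inside the product domain so obtained.

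\textbf{Stage 1: reflection in time.} Let $I=[-T,2T]$ and define, for $(x,t)\in\overline\Omega\times[-T,0)$, $\bar\alpha_1(x,t)=\alpha(x,-t)$ and $\bar f_1(x,t)=f(x,-t)$; for $(x,t)\in\overline\Omega\times(T,2T]$, $\bar\alpha_1(x,t)=\alpha(x,2T-t)$ and $\bar f_1(x,t)=f(x,2T-t)$; and $\bar\alpha_1=\alpha$, $\bar f_1=f$ on $\otc$ (extending $\alpha$ continuously to $\otc$ first, which is legitimate since log-H\"older functions are uniformly continuous). If $\widetilde P,\widetilde Q\in\otc$ denote the reflected arguments of points $P,Q\in\overline\Omega\times I$, then, using that $t_1\le0\le T\le t_2$ forces $|t_2-t_1|\ge T$, together with elementary identities such as $|{-t_1}-(2T-t_2)|\le|t_1-t_2|$, one gets $d(\widetilde P,\widetilde Q)\le d(P,Q)$ in every configuration of $P,Q$ among the original and the two reflected copies. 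Since $r\mapsto r^{\gamma}$ is increasing for $\gamma>0$, this gives $[\bar f_1]_{\bar\alpha_1(\cdot),\Omega\times I}\le[f]_{\alpha(\cdot),\ot}$ at once, hence $|\bar f_1|_{0,\bar\alpha_1(\cdot),\Omega\times I}\le|f|_{0,\alpha(\cdot),\ot}$; the same distance comparison plus a routine comparison of logarithms (as at the end of the proof of Lemma~3.1 in \cite{bies}) shows $\bar\alpha_1\in\mathcal A^{\log}(\Omega\times\mathrm{int}\,I)$ with $\bar\alpha_1^{\pm}=\alpha^{\pm}$. The exponent $\bar\alpha_1$ does not depend on $f$.

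\textbf{Stage 2: lateral extension.} Since $\Omega$ is of class $C^2$ and $\bar f_1\in C^{\bar\alpha_1(\cdot)}(\overline\Omega\times I)$, rerun the proof of Lemma~\ref{extlemsch} for $\Omega$ with $t\in I$ carried along passively: the $C^2$ charts and the partition of unity that flatten $\partial\Omega$ act only on the space variables, the local reflection across $\{x_n=0\}$ does not touch $t$, and in the parabolic metric $d$ the space directions carry exactly the maximum norm used in Lemma~\ref{extlemsch}; hence every estimate there holds verbatim and uniformly in $t$, and, the $t$-dependence never being altered, the estimates assemble into joint estimates in $d$. This yields a radius $\sigma_0>0$, an exponent $\bar\alpha\in\mathcal A^{\log}(\Omega_{\sigma_0}\times I)$ with $\bar\alpha|_{\overline\Omega\times I}=\bar\alpha_1$ and $\bar\alpha^{\pm}=\alpha^{\pm}$ (depending on $\alpha$ only), and, for the given $f$, an extension $\bar f$ of $\bar f_1$ with
\begin{align*}
|\bar f|_{0,\bar\alpha(\cdot),\Omega_{\sigma_0}\times I}\le C\,|\bar f_1|_{0,\bar\alpha_1(\cdot),\Omega\times I}\le C\,|f|_{0,\alpha(\cdot),\ot},\qquad C=C(\Omega,n,\alpha^-,\alpha^+,c_{\log}(\alpha)).
\end{align*}
Finally pick $\sigma\in(0,\sigma_0]$ with $\sigma^2\le T$; since a parabolic $\sigma$-ball about a point of $\ot$ is the box of spatial half-width $\sigma$ and temporal half-width $\sigma^2$, we have $\ots\subset\Omega_\sigma\times(-\sigma^2,T+\sigma^2)\subset\Omega_{\sigma_0}\times I$, so restricting $\bar\alpha$ and $\bar f$ to $\ots$ gives all the asserted properties, the constant acquiring its dependence on $T$ only through the length of $I$.

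The two places that need genuine care are: in Stage~1, verifying $d(\widetilde P,\widetilde Q)\le d(P,Q)$ (and the parallel log comparison for $\bar\alpha_1$) for all mutual positions of $P$ and $Q$ among the original and the two reflected copies; and in Stage~2, justifying the reuse of Lemma~\ref{extlemsch} — i.e. checking that the extension of \cite{bies1} is realised by a fixed recipe (localisation, $C^2$-flattening, reflection with the variable-exponent correction, summation against a fixed partition of unity) so that inserting the extra variable $t$ changes nothing and the resulting $\bar f$ is jointly continuous up to $\otsc$ and obeys the joint H\"older bound. Everything else is routine bookkeeping.
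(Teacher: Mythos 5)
Your proposal is correct and follows the same two-stage strategy as the paper: extend in time first, then laterally across $\partial\Omega$ by reusing the construction of Lemma~\ref{extlemsch} with $t$ as a passive parameter, and finally check joint log-H\"older/H\"older estimates by splitting any difference into a pure-space and a pure-time increment. The only deviation is in Stage~1, where you reflect in time ($\bar f_1(x,t)=f(x,-t)$, etc.) whereas the paper simply extends by constants ($\tilde f(x,t)=f(x,0)$ for $t\le 0$, $\tilde f(x,t)=f(x,T)$ for $t\ge T$); both yield the nonexpansive distance comparison $d(\widetilde P,\widetilde Q)\le d(P,Q)$ that drives the estimate, so the difference is cosmetic and your argument is sound.
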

\begin{proof}
First, we extend $f$ to a set $\Omega\times(-\sigma^2,T+\sigma^2)$. Let $\tilde{f}\colon \Omega\times(-\sigma,T+\sigma)\to \R$ be defined as follows
\begin{align*}
\tilde{f}(x,t)=\begin{cases}
f(x,t),&\textrm{if }0< t< T,\\
f(x,T),&\textrm{if } T\leq t< T+\sigma^2,\\
f(x,0),&\textrm{if }-\sigma^2 < t\leq 0.
\end{cases}
\end{align*}
Similarly, we define
\begin{align*}
\tilde{\alpha}(x,t)=\begin{cases}
\alpha(x,t),&\textrm{if }0< t< T,\\
\alpha(x,T),&\textrm{if } T\leq t< T+\sigma^2,\\
\alpha(x,0),&\textrm{if }-\sigma^2 < t\leq 0.
\end{cases}
\end{align*}
It is easy to check that $\tilde{f} \in C^{\tilde{\alpha}(\cdot)}(\otsc)$. The positive $\sigma$ will be specified later.

Now, let us take $t_0\in [-\sigma^2,T+\sigma^2]$. We can extend the function $\tilde{f}(\cdot,t_0)$  to $\Omega_{\sigma}$ using Lemma \ref{extlemsch}. We choose $\sigma$ as in this lemma. We proceed with $\tilde{\alpha}$ analogously. Thus, we have $\bar{\alpha}$ and $\bar{f}$, defined on $\ots$. We will show that $\bar{\alpha}\in \mathcal{A}^{\log}(\ots)$. Let
\begin{align*}
\cdot^*\colon\left(\partial\Omega\right)_{\sigma}\to\left(\partial\Omega\right)_{\sigma},
\end{align*}
be the mapping from the proof of Lemma \ref{extlemsch} in \cite{bies1}. We represent each $x\in\left(\partial\Omega\right)_{\sigma}$ as follows 
\begin{align*}
x=x_0+dn(x_0),
\end{align*}
where $x_0\in \partial\Omega$, $n(x_0)$ is an exterior unit normal vector and $d\in (-\sigma,\sigma)$. The point $x_0$ and the number $d$ are uniquely determined. For $x=x_0+dn(x_0)$ define $$x^*=x_0-dn(x_0).$$
Therefore, 
\begin{align*}
\bar{f}(x,t)=\tilde{f}(x^*,t),\qquad \bar{\alpha}(x,t)=\tilde{\alpha}(x^*,t)\qquad\textrm{for $x\in\Omega_{\sigma}\setminus\Omega$ and $-\sigma^2\leq t\leq T+\sigma^2$.}
\end{align*}

 Let us take $P=(x,t),Q=(y,s)\in\ots$ such that $d(P,Q)\leq \tfrac{1}{2}$.
We estimate 
\begin{align*}
\left|\ln (d(P,Q))\right|\left|\bar{\alpha}(P)-\bar{\alpha}(Q)\right|&\leq \left|\ln d(P,Q)\right|\left(|\bar{\alpha}(P)-\bar{\alpha}(y,t)|+|\bar{\alpha}(y,t)-\bar{\alpha}(Q)|\right)\\
&\leq \left|\ln(|x-y|)\right|\bar{\alpha}(P)-\bar{\alpha}(y,t)|+\big|\ln|s-t|^{1/2}\big||\bar{\alpha}(y,t)-\bar{\alpha}(Q)|=W_1+W_2.
\end{align*}
Clearly
\begin{align*}
W_1\leq c_{\log}(\alpha).
\end{align*}
We have to consider two cases to estimate $W_2$.
\begin{enumerate}
\item If $y\in \Omega$, then
\begin{align*}
W_2\leq c_{\log}(\alpha).
\end{align*}
\item If $y\in(\partial\Omega)_{\sigma}\setminus\Omega$, then we write
\begin{align*}
W_2=\big|\ln|s-t|^{1/2}\big||\bar{\alpha}(y^*,t)-\bar{\alpha}(y^*,s)|\leq c_{\log}(\alpha).
\end{align*}
\end{enumerate}
Next, if $d(P,Q)>\frac{1}{2}$, we estimate as follows 
\begin{align*}
|\bar{\alpha}(P)-\bar{\alpha}(Q)||\ln d(P,Q)|\leq 2\alpha^+\max\left(\left|\ln\diam{\ot}\right|, \ln 2 \right).
\end{align*}
Thus, we see that $\bar{\alpha}\in \mathcal{A}^{\log}(\ots)$.

It is left to check that $\bar{f}\in C^{\bar{\alpha}(\cdot)}(\otsc)$. For $P=(x,t),Q=(y,s)\in\ots$ we proceed as follows
\begin{align*}
|\bar{f}(P)-\bar{f}(Q)|\leq |\bar{f}(P)-\bar{f}(y,t)|+|\bar{f}(y,t)-\bar{f}(Q)|.
\end{align*}
Terms on the right--hand side can be estimated in the similar way as $\bar{\alpha}$.
\end{proof}

We define a ball centered at $x\in\R^n\times\R$ with radius $r>0$ as usual 
$$
B(X,r)=\left\{Y\in\R^n\times\R\colon |X-Y|<r\right\}.
$$
We used there the Euclidean norm.

Now, let $\phi$ be the standard mollifier, i.e. $\phi\geq 0$, $\textnormal{supp}\,\phi\subset B(0,1)$, $\int_{B(0,1)}\phi\dx x=1$ and $\phi$ is smooth. For $\epsilon>0$ we denote $\phi_{\epsilon}(\cdot)=\frac{1}{\epsilon^n}\phi(\frac{\cdot}{\epsilon})$. Then, if $f\in L^1_{\textnormal{loc}}$, we define $f_{\epsilon}=f*\phi_{\epsilon}$.
\begin{lem}\label{molem}
Let $\Omega\subset\R^n$ be an open and bounded set and fix $T>0$, $\sigma>0$. Then for any $\delta\in(0,\alpha^-)$ there exists $\epsilon'=\epsilon'(\delta)>0$ such that for all $\epsilon\leq\epsilon'$, $f\in\chpbt(\otsc)$ the following inequality
\begin{equation*}
|f_{\epsilon}|_{0,\ac-\delta,\ot}\leq 3|f|_{0,\ac,\ots}
\end{equation*}
holds.
\end{lem}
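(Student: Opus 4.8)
### Proof Plan for Lemma \ref{molem}

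The plan is to split the estimate into a bound on the sup-norm of $f_\epsilon$ and a bound on the variable Hölder seminorm $[f_\epsilon]_{0,\ac-\delta,\ot}$, and to control each term separately with the mollification constant $\epsilon$ chosen small enough. The sup-norm part is immediate: since $\phi_\epsilon\geq 0$ integrates to $1$, for every $X\in\ot$ and $\epsilon$ small enough that $B(X,\epsilon)\subset\ots$ we have $|f_\epsilon(X)|\leq\int\phi_\epsilon(Y)|f(X-Y)|\dx Y\leq|f|_{0,\ots}$, so $|f_\epsilon|_{0,\ot}\leq|f|_{0,\ots}$. Thus the real work is the seminorm.

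For the seminorm, fix two distinct points $P,Q\in\ot$ with $\epsilon$ small enough that the $\epsilon$-balls around both lie in $\ots$. Writing $f_\epsilon(P)-f_\epsilon(Q)=\int\phi_\epsilon(Z)\bigl(f(P-Z)-f(Q-Z)\bigr)\dx Z$ and using the pointwise bound $|f(P-Z)-f(Q-Z)|\leq[f]_{0,\ac,\ots}\,d^{\alpha(Q-Z)}(P,Q)$, everything reduces to comparing $d^{\alpha(Q-Z)}(P,Q)$ with $d^{\alpha(Q)-\delta}(P,Q)$, uniformly in $Z\in B(0,\epsilon)$. There are two regimes. If $d(P,Q)\geq 1$, then $d^{\alpha(Q-Z)}(P,Q)\leq d^{\alpha^+}(P,Q)\leq (\diam\ot)^{\alpha^+-\alpha^-+\delta}\,d^{\alpha(Q)-\delta}(P,Q)$ (a constant depending only on $\diam\ot$, $\alpha^\pm$, $\delta$). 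If $d(P,Q)<1$, then $d^{\alpha(Q-Z)}(P,Q)=d^{\alpha(Q)-\delta}(P,Q)\cdot d^{\alpha(Q-Z)-\alpha(Q)+\delta}(P,Q)$, and I must show the last factor is bounded by $2$ for $\epsilon$ small. Since the exponent $\alpha(Q-Z)-\alpha(Q)+\delta$ is at least $\delta-|\alpha(Q-Z)-\alpha(Q)|$, and log-Hölder continuity of $\bar\alpha$ on $\ots$ (via Lemma \ref{extlem}, or directly since $\alpha\in\mathcal A^{\log}$) gives $|\alpha(Q-Z)-\alpha(Q)|\,|\ln|Z||\leq c_{\log}$, hence $|\alpha(Q-Z)-\alpha(Q)|\to 0$ as $|Z|\to 0$, uniformly; choosing $\epsilon'$ so small that $|\alpha(Q-Z)-\alpha(Q)|\leq\delta$ whenever $|Z|\leq\epsilon'$ makes the exponent nonnegative, whence $d^{\alpha(Q-Z)-\alpha(Q)+\delta}(P,Q)\leq d^{0}(P,Q)=1<2$ when $d(P,Q)\leq 1$.

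Combining, for $\epsilon\leq\epsilon'$ one gets $[f_\epsilon]_{0,\ac-\delta,\ot}\leq 2[f]_{0,\ac,\ots}$ in the small-distance regime and $[f_\epsilon]_{0,\ac-\delta,\ot}\leq C(\diam\ot,\alpha^\pm,\delta)[f]_{0,\ac,\ots}$ in the large-distance regime; one then shrinks $\epsilon'$ once more (or simply absorbs the large-distance constant, noting $\diam\ot$ is fixed) so that the total is at most $2|f|_{0,\ac,\ots}$, and adding the sup-norm bound $|f_\epsilon|_{0,\ot}\leq|f|_{0,\ots}$ yields $|f_\epsilon|_{0,\ac-\delta,\ot}\leq 3|f|_{0,\ac,\ots}$. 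The main obstacle is the uniform-in-$Z$ control of the exponent perturbation $d^{\alpha(Q-Z)-\alpha(Q)+\delta}(P,Q)$, i.e. ensuring the threshold $\epsilon'$ can be chosen depending only on $\delta$ (and the fixed data $\Omega,T,\sigma,c_{\log}(\alpha),\alpha^\pm$) and not on the particular points $P,Q$ or the function $f$; this is exactly where log-Hölder continuity is used, and it is the only genuinely delicate point — the rest is bookkeeping with the properties of the mollifier.
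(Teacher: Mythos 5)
Your overall approach mirrors the paper's: bound the sup-norm and the variable H\"older seminorm separately, split the seminorm estimate into $d(P,Q)<1$ and $d(P,Q)\geq 1$, and use uniform continuity of $\alpha$ (a consequence of log-H\"older continuity) to control the exponent perturbation $\alpha(Q-Z)-\alpha(Q)$ in the short-range regime. That part of your argument coincides with the paper's proof.

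Where you diverge is in the long-range regime $d(P,Q)\geq 1$, and there you have a gap. You continue to estimate via the H\"older seminorm of $f$, writing
$d^{\alpha(Q-Z)}(P,Q)\leq d^{\alpha^+}(P,Q)\leq (\diam{\ot})^{\alpha^+-\alpha^-+\delta}\,d^{\alpha(Q)-\delta}(P,Q)$,
which produces a constant $(\diam{\ot})^{\alpha^+-\alpha^-+\delta}$ that is not controlled by $3$ in general (take $\diam{\ot}$ large). Your proposed fix --- ``shrink $\epsilon'$ once more'' --- cannot repair this: the long-range constant has nothing to do with the mollification radius; it comes entirely from $\diam{\ot}$ and the exponent spread. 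And ``absorbing'' the constant changes the statement, which asserts the explicit bound $3$. The paper instead abandons the seminorm entirely in the long-range regime and uses the crude bound $|f_\epsilon(X)-f_\epsilon(Y)|\leq 2|f_\epsilon|_{0,\ot}\leq 2|f|_{0,\ots}$ together with $d^{\alpha(X)-\delta}(X,Y)\geq 1$ (valid since $d\geq 1$ and $\alpha(X)-\delta>0$); this gives the seminorm bound $2|f|_{0,\ots}$ with no $\diam{\ot}$ dependence, and adding the sup-norm bound $|f_\epsilon|_{0,\ot}\leq|f|_{0,\ots}$ yields $3|f|_{0,\ac,\ots}$. Replacing your long-range estimate with this one closes the gap; everything else in your plan is sound.
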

\begin{proof}
Because $\alpha$ is log-H\"older continuous, so it is also uniformly continuous. Therefore, there exists $\epsilon'>0$ such that for all $X,Y\in\ots$ with $|X-Y|<\epsilon'$ we have
\begin{align*}
|\alpha(X)-\alpha(Y)|<\delta.
\end{align*}
Consequently
\begin{align}\label{pomin2011}
\alpha(X)-\alpha(Y)>-\delta.
\end{align}
Fix $X,Y\in\ots$ with $d(X,Y)<1$, then
\begin{align}\label{in21111}
\frac{|f_{\epsilon}(X)-f_{\epsilon}(Y)|}{d^{\alpha(X)}(X,Y)}&\leq\int_{B(0,\epsilon)}\phi_{\epsilon}(Z)\frac{|f_{\epsilon}(X-Z)-f_{\epsilon}(Y-Z)|}{d^{\alpha(X)}(X,Y)}\dx Z
\nonumber\\&\leq [f]_{0,\ac,\ot} \int_{B(0,\epsilon)}\phi_{\epsilon}(Z)d^{\alpha(X-Z)-\alpha(X)}(X,Y)\dx Z.
\end{align}
Since $d(X,Y)<1$ and $d(X-Z,X)<\epsilon$, by (\ref{pomin2011}) we have 
\begin{align*}
d^{\alpha(X-Z)-\alpha(X)}(X,Y)\leq d^{-\delta}(X,Y).
\end{align*}
Thus, from (\ref{in21111}) we get
\begin{align*}
\frac{|f_{\epsilon}(X)-f_{\epsilon}(Y)|}{d^{\alpha(X)-\delta}(X,Y)}\leq [f]_{0,\ac,\ots}.
\end{align*}
Now, let us take $X,Y\in \ot$ such that $d(X,Y)\geq 1$. Then 
\begin{align*}
\frac{|f_{\epsilon}(X)-f_{\epsilon}(Y)|}{d^{\alpha(X)+\delta}(X,Y)}\leq 2|f_{\epsilon}|_{0,\ot}\leq 2|f|_{0,\ots}.
\end{align*}
Thus, finally the above inequality yields
\begin{align*}
[f_{\epsilon}]_{0,\ac-\delta,\ot}\leq 2|f_{\epsilon}|_{0,\ot}\leq 2|f|_{0,\ots}+[f]_{0,\ac,\ots}.
\end{align*}
Hence,
\begin{align*}
|f_{\epsilon}|_{0,\ac-\delta,\ot}\leq 3|f|_{0,\ac,\ots}.
\end{align*}
\end{proof}
Now, we are able to prove Theorem \ref{finth}.
\begin{proof}[Proof of Theorem \ref{finth}]
Because the operator $u_t-Lu$ is linear, we can assume that boundary values are equal to zero i.e. $\phi=0$. First, we consider the following nonhomogeneous heat equation. That is the equation of the form
\begin{align*}
u_t-Lu=u_t-\Delta u=f.
\end{align*}
Next, we apply Lemma \ref{extlem}. Let $\sigma$ be as in this lemma and let $\bar{f}$ and $\bar{\alpha}$ be extensions to $\ots$ of $f$ and $\alpha$ respectively . Then, we can mollify the function $\bar{f}$ on $\ot$. From Lemma \ref{molem}, we get the following two sequences
\begin{align*}
\delta_m\to 0^+,\qquad \epsilon_m\to 0^+.
\end{align*}
We can assume that both of them are decreasing. For all $m\in \mathbb{N}$ the inequality
\begin{align*}
|\bar{f}_{\epsilon_m}|_{0,\ac-\delta_m,\ot}\leq 3|\bar{f}|_{0,\ac, \ots}
\end{align*}
holds. Without loss of generality, we can assume that for all $m$ the inequality $\delta_m<\alpha^-$ is satisfied.

Inequality (\ref{extin}) yields 
\begin{align}\label{in1e}
|\bar{f}_{\epsilon_m}|_{0,\ac-\delta_m,\ot}\leq 3|{f}|_{0,\ac, \ots}.
\end{align} 
 Since $\bar{f}_{\epsilon_m}\in C^{\infty}(\otc)$, we can  use the theory of existence for H\"older spaces with a constant variable.  Thus, the problem
\begin{align*}
u_{\epsilon_m,t}-Lu_{\epsilon_m}={}&\bar{f}_{\epsilon_m}\textrm{ on }\ot,\\
u_{\epsilon_m}={}& 0\textrm{ on }\gt
\end{align*}
has got a unique solution $u_{\epsilon_m}\in C^{2,1,\alpha^+-\delta_m}(\otc)$ for all $m$.

Using inequality from Theorem \ref{mainl} and inequality (\ref{in1e}) we obtain
\begin{align}\label{in6091}
|u_{\epsilon_m}|_{2,1,\ac-\delta_m,\ot}\leq C\left(|\bar{f}_{\epsilon_m}|_{0,\ac-\delta_m,\ot}+|u_{\epsilon_m}|_{0,\ot}\right)\leq C\left(|f|_{0,\ac,\ot}+|u_{\epsilon_m}|_{0,\ot}\right).
\end{align}
The constant from Theorem \ref{mainl} depends on $\alpha^+-\delta_m$ and $\alpha^--\delta_m$. Therefore, it depends on $m$, but it can be shown that we can take a finite $C$, which is good for all $m$.

In virtue of the maximum principle, we conclude that $|u_{\epsilon_m}|_{0,\ot}\leq C |f_{\epsilon_m}|_{0,\ot}\leq |f|_{0,\ot}$. Therefore, (\ref{in6091}) implies
\begin{align}\label{in2711}
|u_{\epsilon_m}|_{2,1,\ac-\delta_m,\ot}\leq C |f|_{0,\ac,\ot}.
\end{align}
Let us take $\gamma$ such that $\alpha^--\delta_m\geq \gamma>0$. Then, from (\ref{in2711}) we obtain  
\begin{align*}
|u_{\epsilon_m}|_{2,1,\gamma,\ot}\leq C |f|_{0,\ac,\ot}.
\end{align*}
Hence, the sequence $\{u_{\epsilon_m}\}$ is bounded in the space $C^{2,1,\gamma}(\otc)$. Therefore, by the Arzela--Ascoli Theorem, we conclude that there exists a subsequence, still denoted as $u_{\epsilon_m}$, and $u\in C^{2,1,\gamma}(\otc)$ such that 
\begin{align*}
u_{\epsilon_m}\to u\textrm{\quad in } C^{2,1}(\otc).
\end{align*}

Letting $m\to \infty$ in $u_{\epsilon_m,t}-\Delta u_{\epsilon_m}=\bar{f}_{\epsilon_m}$, we obtain
\begin{align*}
u_t-\Delta u=f.
\end{align*}
Moreover, by (\ref{in2711}) there exists $M>0$ such that for all $X,Y\in \ot$ we have
\begin{align*}
\frac{|D^2_xu(X)-D^2_xu(Y)|}{d^{\alpha(X)-\delta_m}(X,Y)}+\frac{|u_t(X)-u_t(Y)|}{d^{\alpha(X)-\delta_m}(X,Y)}\leq M.
\end{align*}
Letting $m\to\infty$ in the above inequality, we conclude that $u\in C^{2,1,\ac}(\ot)$. This ends the proof for the heat equation.

Now, let $\tfrac{\partial}{\partial t}-L$ be an arbitrary parabolic operator. We will apply the method of continuity in this case. Let $T_0=\frac{\partial}{\partial t}-\Delta$ and $T_1=\frac{\partial}{\partial t}-L$. We define an operator $T_s=(1-s)T_0+sT_1$ for $s\in [0,1]$. By Theorem~\ref{mainl}, we obtain that there exists a constant C such that for all $s\in [0,1]$ and $u\in C^{2,1\ac}(\otc)$ the following inequality
\begin{align*}
|u|_{2,1,\ac,\ot}\leq C\left(|u|_{0,\ot}+|T_su|_{0,\ac,\ot}\right)
\end{align*}
is satisfied. Since $c\geq 0$, the maximum principle implies $|u|_{0,\ot}\leq C |T_su|_{0,\ot}$. Combining this with the above inequality yields 
\begin{align*}
|u|_{2,1,\ac,\ot}\leq C|T_su|_{0,\ac,\ot}.
\end{align*}
Finally, thanks to the method of continuity, we obtain existence of solutions in a general case.
\end{proof}

We finish the article with the following example.
\begin{ex}
Let us fix $e^{-2}<\gamma<1$ and $\zeta<1-\gamma$. Set $\Omega=B(0,\zeta)$ and $T=\zeta$. Moreover, let $\alpha\colon\ot\to(0,1)$ be a variable exponent defined as follows
\begin{align*}
\alpha(x,t)=(\gamma+|x|)(\gamma+t).
\end{align*}
It is easy to see that
\begin{align*}
\alpha^+=(\gamma+\zeta)^2,\quad\alpha^-=\gamma^2.
\end{align*}
We claim that the exponent $\alpha$ is log-H\"older continuous. Indeed, for $(x,t),(y,s)\in\ot$ we have 
\begin{align*}
|\alpha(x,t)-\alpha(y,s)|\left|\ln d\left((x,t),(y,s)\right)\right|&\leq \left(|\alpha(x,t)-\alpha(y,t)|+|\alpha(y,t)-\alpha(y,s)|\right)\left|\ln d\left((x,t),(y,s)\right)\right|\\
&\leq \left(|x-y|+|s-t|\right)\left|\ln d\left((x,t),(y,s)\right)\right|.
\end{align*}
The right-hand side of the above inequality is bounded, so $\alpha$ is log-H\"older continuous.

We define $f\colon\ot\to\R$ as follows
\begin{align*}
f(x,t)=(|x|+\sqrt{t})^{\alpha(x,t)}\textrm{ for }(x,t)\in\ot.
\end{align*}
One can check that 
\begin{align*}
|f(x,t)-f(y,s)|\leq C d^{\alpha(x,t)}((x,t),(y,s)) \textrm{ for }(x,t),(y,s)\in\ot.
\end{align*}
Thus, $f\in C^{\ac}(\otc)$.

We consider the problem
\begin{align}\label{rowex}
\begin{cases}
L u=f & \text{in} \ \ot,\\
u=0 & \text{ on} \ \gt.
\end{cases}
\end{align}
Due to Theorem \ref{finth}, it has a unique solution $u\in \chp{2}{1}(\otc)$. Notice that $f\notin C^{\beta}(\otc)$, where $\beta$ is a constant exponent and $\beta\in (\alpha^-,1)$. Indeed, to see this take a sequence $\theta_n=(\zeta/n,0,\ldots,0,1/n^2)\in\ot$ and a point $\theta_0=(0,\ldots,0)$. Then 
\begin{align*}
\frac{|f(\theta_n)-f(0)|}{|\theta_n|^{\beta}}\geq (\zeta+1)^{\alpha^--\beta}\frac{}{}n^{\beta-(\gamma+\zeta/n)(\gamma+1/n)}\longrightarrow\infty.
\end{align*}
Thus,(\ref{rowex}) has no solutions in the space $C^{2,1,\beta}(\otc)$ for $\beta\in (\alpha^-,1)$.
\end{ex}

\section*{Acknowledgments}
The author is supported by National Science Centre, Poland Grant No. 2016/21/N/ST1/01389. P.M.B. wishes to express his thanks to Tomasz Kostrzewa for pointing out mistakes in English and in \LaTeX. He is also greatly indebted to Henning Kempka from University of Applied Sciences for stimulating conversations about mathematics and for arranging Bies' visitation on University of Jena, where a part of the article was written. The author would like to thank Katarzyna Maria D\'zwiga\l a for checking English and acknowledges the many helpful suggestions of Przemys\l aw G\'orka during the preparation of the paper too.
\begin{appendices}
\section{Interpolation Inequalities}\label{apint}
We present here the interpolation inequalities for parabolic H\"older spaces with variable exponent.
 Proofs of these results are almost the same as proofs of analogous facts in Appendix A in \cite{bies1}. Thus, we only formulate  these lemmata without proofs.
\begin{lem}\label{intlem}
Let us assume that $\Omega\subset\R^n$ is an open and bounded set and $T>0$.
Let $\alpha$ and $\beta$ be variable exponents on $\ot$ such that $j+\beta^+<k+\alpha^-$, where $k$ and $j$ are non-negative integer numbers. Then for  $\epsilon>0$ there exists a constant $C$, that for $u\in C^{k,\ac}(\otb)$ the following inequalities
\begin{align*}
[u]^*_{j,\beta(\cdot),\ot}\leq C|u|_{0,\ot}+\epsilon[u]^*_{k,\ac,\ot},\\
|u|^*_{j,\beta(\cdot),\ot}\leq C|u|_{0,\ot}+\epsilon[u]^*_{k,\beta(\cdot),\ot}
\end{align*}
are satisfied and $C=C(\ot,\beta^+,\beta^-,c_{\log}(\beta), \alpha^+,\alpha^-,c_{\log}(\alpha),\epsilon,k,j)$.
\end{lem}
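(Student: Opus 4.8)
The plan is to follow the scheme used for the elliptic analogue in Appendix~A of \cite{bies1}, combined with the classical interpolation argument for parabolic H\"older spaces (see \cite{krylov,lieb}), adapting it to the variable exponent. First I would reduce the statement to a single ``one-step'' inequality by iteration: it is enough to show that for every $\epsilon>0$ there is $C(\epsilon)$ with
\begin{align*}
[u]^*_{j,\beta(\cdot),\ot}\le\epsilon\bigl([u]^*_{j+1,\ot}+[u]^*_{j,\gamma(\cdot),\ot}\bigr)+C(\epsilon)\,|u|_{0,\ot}
\end{align*}
whenever $\gamma$ is a variable exponent with $\beta^+<\gamma^-$, together with the companion bound $[u]^*_{j,\ot}\le\epsilon\,[u]^*_{j+1,\ot}+C(\epsilon)|u|_{0,\ot}$. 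Since the hypothesis $j+\beta^+<k+\alpha^-$ forces only finitely many consecutive ``levels'' to lie strictly between $(j,\beta)$ and $(k,\alpha)$, composing these one-step estimates gives the first displayed inequality of the Lemma; the full-norm inequality then follows by applying the seminorm version at each order $j'\le j$ and summing. For the iteration to make sense one has to observe that each intermediate seminorm $[u]^*_{j',\gamma'(\cdot),\ot}$ is finite for $u\in C^{k,\ac}(\otb)$, which is precisely where the strict inequalities on the exponents are used.

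The one-step inequality I would prove by the standard ``shrinking cube'' device made local. Fix $P=(x_0,t_0)\in\ot$ and a small parameter $\mu\in(0,1)$ to be chosen at the end, and put $\rho=\mu\,d_P$ and $N=N(P,\rho)\subset\ot$. Exactly as in inequality~(\ref{3in14}) one gets $d_Q\ge(1-\mu)d_P\ge\tfrac12 d_P$ for all $Q\in N$, so on $N$ all the weights $d_Q$, $d_{Q,Q'}$ are comparable to $d_P$ and the starred (weighted) seminorms restricted to $N$ reduce, up to fixed powers of $d_P$ and harmless constants, to the ordinary unweighted seminorms over $N$. On this fixed spatial scale $\rho$ I would invoke the classical interpolation inequality with a \emph{constant} exponent (taking the exponent to be $\beta(P)$, in the parabolic form in which a time derivative counts as two space derivatives): this bounds $|D^j u(P)|$ and $|D^j u(P)-D^j u(Q)|/d^{\beta(P)}(P,Q)$ by a combination of $\rho^{-j-\beta(P)}|u|_{0,N}$ and positive powers of $\rho$ times the higher-order quantities $|D^{j+1}u|_{0,N}$ and $[D^j u]_{\gamma(\cdot),N}$. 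Substituting $\rho=\mu\,d_P$, multiplying back by the appropriate power of $d_P$ to recover the starred seminorms, and then taking the supremum over $P$ (and $Q$) turns the first term into $C\mu^{-j-\beta^+}|u|_{0,\ot}$ and the remaining terms into $C\mu^{\,\delta}$ times starred seminorms with some $\delta>0$; choosing $\mu$ small enough absorbs the latter into $\epsilon\bigl([u]^*_{j+1,\ot}+[u]^*_{j,\gamma(\cdot),\ot}\bigr)$ and then $C(\epsilon)=C\mu^{-j-\beta^+}$.

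The one feature that is genuinely new compared with the constant-exponent theory, and which I regard as the main (though essentially technical) obstacle, is the passage between a variable exponent and its value at a chosen point. Whenever a factor $\rho^{\,\beta(P)-\beta(Q)}$, $\rho^{\,\gamma(P)-\gamma(Q)}$ or $\rho^{\,\alpha(P)-\alpha(Q)}$ appears, I would control it exactly as in case~(b) of the proof of Theorem~\ref{sches}: since $\rho\le\mu\,d_P\le\mu\,\diam{\ot}$ and, for $Q\in N$, $d(P,Q)\le\rho$, the log-H\"older condition~(\ref{rev1}) yields $\rho^{\,\beta(P)-\beta(Q)}\le e^{c_{\log}(\beta)}\max\bigl(1,\diam{\ot}\bigr)^{\beta^+-\beta^-}$, and similarly for $\alpha,\gamma$; hence every exponent occurring in the fixed-scale estimate may be frozen at a convenient point at the cost of a constant depending only on $\diam{\ot}$, the $c_{\log}$'s and the numbers $\alpha^\pm,\beta^\pm,\gamma^\pm$. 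Finally, as in Theorem~\ref{sches}, I would split the supremum defining $[u]^*_{j,\beta(\cdot),\ot}$ into the regime $d(P,Q)<\mu\,d_{P,Q}$, where $N(P,\mu d_P)$ contains $Q$ and the local argument above applies, and the regime $d(P,Q)\ge\mu\,d_{P,Q}$, where one simply estimates $d_{P,Q}^{j+\beta(P)}|D^j u(P)-D^j u(Q)|/d^{\beta(P)}(P,Q)\le C\mu^{-j-\beta^+}d_{P,Q}^{\,j}\bigl(|D^j u(P)|+|D^j u(Q)|\bigr)\le C\mu^{-j-\beta^+}[u]^*_{j,\ot}$, reducing everything to the already-established bound on $[u]^*_{j,\ot}$. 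Combining the two regimes, and then the companion inequality for $[u]^*_{j,\ot}$, completes the plan; the remaining work is the careful bookkeeping of the weights $d_P,d_{P,Q}$ against the frozen exponents.
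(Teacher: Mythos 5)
The paper does not actually prove Lemma~\ref{intlem}: Appendix~\ref{apint} states explicitly that ``proofs of these results are almost the same as proofs of analogous facts in Appendix A in \cite{bies1}'' and omits them. So there is no proof in this paper to compare against; the comparison is with what one would expect from the elliptic analogue in \cite{bies1}.

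Your overall strategy is the right one and is, in essence, what such a proof has to look like: the ``shrinking cube'' device $N(P,\mu d_P)$, the observation (your instance of~(\ref{3in14})) that $d_Q\sim d_P$ on that cube so starred seminorms localise to unweighted ones, the application of the classical \emph{constant}-exponent parabolic interpolation inequality at the frozen exponent, the control of factors like $\rho^{\beta(P)-\beta(Q)}$ via the log-H\"older condition exactly as in case~(b) of the proof of Theorem~\ref{sches}, and the split between $d(P,Q)<\mu d_{P,Q}$ and $d(P,Q)\ge\mu d_{P,Q}$. You also correctly identify the log-H\"older freezing as the only genuinely new ingredient relative to the constant-exponent theory, and the passage from the seminorm estimate to the full-norm estimate by summing over lower orders is standard and correct.

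Where I would push back is the reduction to a ``one-step'' inequality. As you have written it, the right-hand side contains $[u]^*_{j,\gamma(\cdot),\ot}$, a seminorm at the \emph{same} order $j$ but with a larger H\"older index. Iterating that relation does not obviously terminate: you can keep increasing $\gamma$ towards $1$, but variable exponents are required to stay strictly below $1$, so you never reach $(j+1,0)$ by this route, and you would need a separate, unproved inequality of the form $[u]^*_{j,\gamma(\cdot),\ot}\lesssim [u]^*_{j+1,\ot}+\dots$ to close the chain. The cleaner (and standard) route is to skip the iteration entirely: on each cube $N(P,\mu d_P)$, apply the full-strength constant-exponent interpolation inequality at the frozen value $\beta(P)$ (and target $\alpha(P)$) so that $[D^ju]_{\beta(P),N}$ is bounded directly by $\rho^{-j-\beta(P)}|u|_{0,N}$ plus a positive power of $\rho$ times $[D^ku]_{\alpha(P),N}$, with no intermediate levels, and then use the log-H\"older freezing to replace $\alpha(P)$ by $\alpha(\cdot)$ up to a harmless constant. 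This avoids both the non-terminating iteration and the awkward mixing in your one-step statement where a constant-exponent classical inequality is invoked but a variable-exponent seminorm $[D^ju]_{\gamma(\cdot),N}$ appears on the right. Apart from that one structural detour, your plan is sound and matches the approach the paper points to.
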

\begin{lem}
Let us assume that $\Omega\subset\R^n$ is an open and bounded set and $T>0$. Let us also require that $\Gamma$ is a subset of a set $\gt$ such that $\Gamma\subset\{x_n=0\}\cup \{t=0\}$.
Let $\alpha$ and $\beta$ be variable exponents on $\ot$ such that $j+\beta^+<k+\alpha^-$, where $k$ and $j$ are  non-negative integer numbers. Then for  $\epsilon>0$ there exists a constant $C$, that for  $u\in C^{k,\ac}(\otb)$ the following inequalities
\begin{align*}
[u]^*_{j,\beta(\cdot),\otg}\leq C|u|_{0,\ot}+\epsilon[u]^*_{k,\ac,\otg},\\
|u|^*_{j,\beta(\cdot),\otg}\leq C|u|_{0,\ot}+\epsilon[u]^*_{k,\beta(\cdot),\otg}
\end{align*}
are satisfied and $C=C(\ot,\beta^+,\beta^-,c_{\log}(\beta), \alpha^+,\alpha^-,c_{\log}(\alpha),\epsilon,k,j,\Gamma)$.
\end{lem}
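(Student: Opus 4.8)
The plan is to imitate, almost verbatim, the proof of the corresponding elliptic interpolation inequalities in Appendix~A of \cite{bies1}; the only modifications are that the Euclidean metric is replaced by the parabolic metric $d$, that the Euclidean distance to $\partial\Omega$ is replaced by $d_P=\min\{t,\dist{\partial\Omega}{x}\}$, and that the local regions used are the semicubes $N(P,\cdot)$. First I would reduce the global weighted estimate to a scale-explicit local one. By the definition of the starred seminorms it suffices to bound, for each pair $P=(x_P,t_P),Q=(x_Q,t_Q)\in\ot$ with $d_{P,Q}=d_P\le d_Q$, the quantity $d_P^{\,j+\beta(P)}\frac{|D^ju(P)-D^ju(Q)|}{d^{\beta(P)}(P,Q)}$, and for each $P$ the quantity $d_P^{\,j}|D^ju(P)|$; the latter are handled by the same argument below with the Hölder index absent, so I concentrate on the former. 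Fix $\theta\in(0,\tfrac12)$, to be chosen small depending on $\textnormal{diam}(\ot)$. If $d(P,Q)\ge\theta d_P$, then, after $|D^ju(P)-D^ju(Q)|\le|D^ju(P)|+|D^ju(Q)|$ and $d_P\le d_Q$, the quantity in question is $\le\theta^{-\beta^+}\big(d_P^{\,j}|D^ju(P)|+d_Q^{\,j}|D^ju(Q)|\big)\le 2\theta^{-\beta^+}[u]^*_{j,0,\ot}$, again a pure-derivative seminorm. The substantial case is $d(P,Q)<\theta d_P$, in which $Q$ lies in the semicube $N:=N(P,\theta d_P)$ (contained in $\otb$ once $\theta$ is small), and the triangle inequality gives $d_R\ge(1-\theta)d_P$ for every $R\in N$. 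On $N$ all weights $d_R$ are therefore comparable to $\rho:=\theta d_P$, and it remains to prove the \emph{unweighted} inequality on a semicube of radius $\rho$ with explicit $\rho$-dependence, namely
\[
[u]_{j,\beta(\cdot),N}\le \epsilon\,\rho^{(k+\alpha^-)-(j+\beta^+)}[u]_{k,\alpha(\cdot),N}+C\,\epsilon^{-\mu}\rho^{-(j+\beta^+)}|u|_{0,N},\qquad \mu=\tfrac{j+\beta^+}{(k+\alpha^-)-(j+\beta^+)},
\]
together with the analogue for $|D^ju|_{0,N}$; multiplying by $d_P^{\,j+\beta(P)}$ and using the gap hypothesis $j+\beta^+<k+\alpha^-$ to render the leading term $\le\epsilon[u]^*_{k,\alpha(\cdot),\ot}$ then completes the reduction.

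Next I would freeze the exponents. Having chosen $\theta$ small enough that $d(R,S)<1$ for all $R,S\in N$, one has $d^{\beta(S)}(R,S)\ge d^{\beta^+}(R,S)$ and $d^{\alpha(S)}(R,S)\le d^{\alpha^-}(R,S)$, hence $[u]_{j,\beta(\cdot),N}\le[u]_{j,\beta^+,N}$ and $[u]_{k,\alpha^-,N}\le[u]_{k,\alpha(\cdot),N}$; this reduces the matter to the constant-exponent interpolation inequality between $C^{k,\alpha^-}(\overline{N})$ and $C^{j,\beta^+}(\overline{N})$, which is legitimate precisely because $j+\beta^+<k+\alpha^-$. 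The variable exponents $\alpha,\beta$ reappear only through the factors $d_P^{\pm\alpha(P)}$, $d_P^{\pm\beta(P)}$, $\theta^{\pm\beta(P)}$ generated by the localization; log-Hölder continuity of $\alpha$ and $\beta$ together with $d(P,Q)/d_P\le\theta<1$ bound each of these by a constant depending only on $c_{\log}(\alpha),c_{\log}(\beta),\alpha^\pm,\beta^\pm$ and $\textnormal{diam}(\ot)$, exactly as in the computation at the end of the proof of Theorem~\ref{sches}.

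The constant-exponent interpolation on a semicube is classical (\cite{gt,fried,lieb}). It rests on the one-dimensional lemma: for $h\in C^{k,a}[-\rho,\rho]$ one has $\sup|h^{(i)}|\le C\big(\rho^{-i}\sup|h|+\rho^{k+a-i}[h^{(k)}]_a\big)$ for $0\le i\le k$, whence $\sup|h^{(i)}|\le\epsilon\,\rho^{k+a-i}[h^{(k)}]_a+C\epsilon^{-i/(k+a-i)}\rho^{-i}\sup|h|$ by Young's inequality, and, applied on two nested subintervals, $[h^{(i)}]_b\le\epsilon\,\rho^{k+a-i-b}[h^{(k)}]_a+C\epsilon^{-\mu}\rho^{-i-b}\sup|h|$ whenever $i+b<k+a$. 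Applying this along segments parallel to the coordinate axes controls the spatial part of the parabolic Hölder quotient of $D^ju$ on $N$, and applying it in the $t$-variable controls the time part (with exponent $a/2$, as dictated by $d(P,Q)=\max(|x_P-x_Q|,\sqrt{|t_P-t_Q|})$), just as in the constant-exponent parabolic theory. Summing over the axes and over $0\le i\le j$ yields the displayed local estimate, hence the first inequality of the lemma; the second, $|u|^*_{j,\beta(\cdot),\ot}\le C|u|_{0,\ot}+\epsilon[u]^*_{k,\beta(\cdot),\ot}$, follows by running the same argument with $\beta$ in the role of $\alpha$ and summing the resulting bounds for $[u]^*_{i,0,\ot}$ and $[u]^*_{i,\beta(\cdot),\ot}$ over $0\le i\le j$.

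The step I expect to be the main obstacle is the localization and scaling bookkeeping of the first paragraph: tracking the powers of $\rho=\theta d_P$ on both sides so that the strictly positive gap $(k+\alpha^-)-(j+\beta^+)$ can be exploited to absorb the leading term, while simultaneously controlling the spurious factors $d_P^{\,\alpha(P)-\beta(P)}$, $\theta^{\pm\beta(P)}$ and the like by the log-Hölder inequality. This is precisely the part that occupies the bulk of Appendix~A of \cite{bies1}, and it must be redone here with $d_P$ and the semicubes $N(P,\cdot)$ in place of their elliptic counterparts.
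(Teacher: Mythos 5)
The lemma you are asked to prove is the one involving the boundary portion $\Gamma$ and the seminorms $[u]^*_{j,\beta(\cdot),\otg}$, which are weighted by $\bar{d}_P=\dist{P}{\gt\setminus\Gamma}$, not by the interior weight $d_P=\min\{t,\dist{\partial\Omega}{x}\}$. Your sketch uses $d_P$ throughout (both in the opening list of modifications and in the reduction step), so as written it addresses the interior interpolation lemma, not this one. The distinction matters: $\bar{d}_P$ ignores the distance to $\Gamma$, so for $P$ close to $\Gamma$ one typically has $\bar{d}_P\gg d_P$, and the step ``$Q$ lies in the semicube $N:=N(P,\theta d_P)$ (contained in $\otb$ once $\theta$ is small)'' fails when $d_P$ is replaced by $\bar{d}_P$: the region $N(P,\theta\bar{d}_P)$ escapes through $\Gamma$. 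The whole point of introducing $\Gamma$, the hypothesis $\Gamma\subset\{x_n=0\}\cup\{t=0\}$, and the weight $\bar{d}_P$ is to allow the local region to touch the flat boundary piece $\Gamma$; the proof must then work with the intersection $N\cap\otb$, which is still a rectangular box precisely because $\Gamma$ is flat, and this is the extra content that distinguishes this lemma from the interior one.

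There is a second, more delicate point once you repair the weight: you propose to use the backward semicubes $N(P,\rho)$, whose time-interval is $[t_P-\rho^2,t_P]$. If $\Gamma\supset\{t=0\}$ and $P=(x_P,t_P)$ has $t_P$ small while $\bar{d}_P$ is of order one, then $\rho=\theta\bar{d}_P$ satisfies $\rho^2\gg t_P$ and the truncated backward interval $[\max(0,t_P-\rho^2),t_P]=[0,t_P]$ has length $t_P\ll\rho^2$. The one-dimensional interpolation lemma on that shortened interval does not produce the factor $\rho^{(k+\alpha^-)-(j+\beta^+)}$ you need to absorb; it produces a much worse power. The correct localization must use a two-sided time-interval $[\max(0,t_P-\rho^2),\min(T,t_P+\rho^2)]\cap(0,T)$ — whose length is always $\ge\min(\rho^2,T)$ — rather than the backward semicube. (In the interior lemma this issue does not arise, because $d_P\le t_P$ forces $(\theta d_P)^2\le t_P$ for $\theta$ small, so the backward semicube never reaches $t=0$.) Apart from these two $\Gamma$-specific issues — the wrong weight and the one-sided semicube — the structure of the argument (freezing exponents via log-Hölder continuity, applying the constant-exponent one-dimensional interpolation along coordinate segments, Young's inequality to trade $\rho$-powers against $\epsilon$) is the right one and matches the elliptic template the paper points to.
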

\begin{lem}
Let $\Omega\subset\R^n$ be an open and bounded set.
Let $\alpha$ and $\beta$ be variable exponents on $\ot$   such that $j+\beta^+<k+\alpha^-$, where $k$ and $j$ are  non-negative integer numbers. If a boundary $\partial\Omega$ is of class $C^{k,\alpha^+}$, then for  $\epsilon>0$ there exists a constant $C$, that for  $u\in C^{k,\ac}(\otb)$ the following inequalities
\begin{align*}
[u]_{j,\beta(\cdot),\ot}\leq C|u|_{0,\ot}+\epsilon[u]_{k,\ac,\ot},\\
|u|_{j,\beta(\cdot),\ot}\leq C|u|_{0,\ot}+\epsilon[u]_{k,\beta(\cdot),\ot}
\end{align*}
are satisfied and $C=C(\ot,\beta^+,\beta^-,c_{\log}(\beta), \alpha^+,\alpha^-,c_{\log}(\alpha),\epsilon,k,j)$.
\end{lem}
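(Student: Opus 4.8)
The plan is to reduce the estimate to a purely local one on a fixed parabolic cube, run there the classical finite-difference interpolation argument (cf.\ \cite{gt}), and carry the variable exponent through by means of the log-Hölder condition (\ref{rev1}); this is the un-weighted parabolic analogue of Lemma \ref{intlem}, and the details parallel Appendix~A of \cite{bies1}.

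First I would localize. One covers $\otb$ by finitely many pieces: interior parabolic cubes $N=N(X,\delta)\subset\otb$ — those abutting $\{t=0\}$ or $\{t=T\}$ being treated exactly as interior ones, since finite differences may be taken into the future or into the past — and, near the lateral boundary $\partial\Omega\times(0,T)$, preimages under the $C^{k,\alpha^+}$ boundary charts $\Phi$ of the kind used in the proof of Lemma \ref{brzglscheslem}. Such a chart carries $u\in C^{k,\ac}$ to $\tilde u=u\circ\Phi^{-1}\in C^{k,\tilde\alpha}$ with $\tilde\alpha=\alpha\circ\Phi^{-1}$ again log-Hölder, $\tilde\alpha^{\pm}=\alpha^{\pm}$, and with the relevant norms two-sidedly comparable up to constants depending only on $\Omega$ and the listed data (as in (\ref{pominbrz1})--(\ref{pominbrz})); this is exactly where one uses $\partial\Omega\in C^{k,\alpha^+}$, and not merely $C^k$, so that composition preserves the class $C^{k,\ac}$. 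Summing the local inequalities against a subordinate partition of unity — with an induction on $j$ to absorb the lower-order terms coming from the cutoffs — and using $\beta(\cdot)\le\beta^+$, $\alpha(\cdot)\ge\alpha^-$, then gives the global bound; for the full-norm inequality one first disposes of $|u|_{j,\ot}$ and then adds the seminorm term.

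The local step is as follows. On a cube $N$ of side $\delta$ fix $P$ and write $D^j u(P)$ as a $j$-th order difference quotient of $u$ with spatial increment $\sim\delta$ and temporal increment $\sim\delta^2$: the difference quotient is bounded by $C\delta^{-j}|u|_{0,N}$, while the Taylor remainder, written through an integral of $D^k u$, is bounded by $C\delta^{\,k-j+\alpha^-}[u]_{k,\ac,N}$ (using $\alpha(\cdot)\ge\alpha^-$). Since $k-j+\alpha^->0$, taking $\delta$ small yields $|D^j u|_{0,N}\le C(\epsilon)|u|_{0,N}+\epsilon[u]_{k,\ac,N}$. For the $\beta(\cdot)$-seminorm of $D^j u$ fix $P\neq Q$ in $N$: if $d(P,Q)\ge\delta$, bound $|D^j u(P)-D^j u(Q)|/d^{\beta(P)}(P,Q)$ by $2\delta^{-\beta^+}|D^j u|_{0,N}$ and invoke the previous line with a fresh $\epsilon$; if $d(P,Q)<\delta$, subtract the order-$j$ Taylor polynomials of $u$ at $P$ and at $Q$ (in both space and time) and estimate what remains by $C\,d(P,Q)^{\,k-j+\alpha(P)}[D^k u]_{\alpha(P),P,N}$ plus lower-order terms, so that dividing by $d^{\beta(P)}(P,Q)$ leaves the factor $d(P,Q)^{\,k-j+\alpha(P)-\beta(P)}\le\delta^{\,k-j+\alpha^--\beta^+}$. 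The assumption $j+\beta^+<k+\alpha^-$ is precisely what makes this exponent positive, hence absorbable into $\epsilon$.

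The hard part is not analytic but the bookkeeping forced by the variable exponent: wherever the constant-exponent argument yields a power $\delta^{s}$ of the small radius one must check that $s$ is bounded below by a fixed positive number, uniformly in the base point, and one must be free to replace $d^{\beta(P)}(P,Q)$ by $d^{\beta(Q)}(P,Q)$ (and $d^{\alpha(P)}$ by $d^{\alpha(Q)}$) at the cost of a fixed multiplicative constant. Both follow from (\ref{rev1}) together with the gap $j+\beta^+<k+\alpha^-$, exactly as the comparison $C_1 d^{\alpha(Q)}\le d^{\alpha(P)}\le C_2 d^{\alpha(Q)}$ was obtained at the end of the proof of Lemma~3.1 in \cite{bies}. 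Everything else — the finite-difference identities, the integral form of the Taylor remainder, the covering and partition-of-unity argument, and the invariance of the log-Hölder class under the boundary chart — is routine, and is precisely the content of Appendix~A of \cite{bies1}; that is why the lemma is only stated here.
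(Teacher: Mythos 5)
The paper offers no proof of this lemma — it states only that the argument is ``almost the same'' as in Appendix~A of \cite{bies1} — so the reference itself is the paper's proof, and your sketch is precisely the route it points to: a finite cover by interior parabolic cubes plus $C^{k,\alpha^+}$ boundary charts, the finite-difference/Taylor-remainder interpolation on each piece with parabolic scaling (spatial increment $\sim\delta$, temporal $\sim\delta^2$), the gap $j+\beta^+<k+\alpha^-$ to produce a positive power of the small radius, the log-Hölder condition~\eqref{rev1} to exchange $d^{\alpha(P)}$, $d^{\alpha(Q)}$ and $d^{\beta(P)}$, $d^{\beta(Q)}$ up to fixed constants, and an induction on $j$ to absorb the cutoff terms. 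This is correct and matches what the paper intends; one could equally replace the partition-of-unity step by first invoking the extension of $C^{k,\alpha(\cdot)}$ functions to a $\sigma$-neighbourhood $\ots$ (the higher-order analogue of Lemma~\ref{extlemsch}) and then applying the starred interior inequality of Lemma~\ref{intlem} on $\ots$, where the weights are bounded below by $\sigma$ on $\ot$, but both variants rest on the same local computation.
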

\section{Estimation of integral transforms}\label{apest}

In this section we will consider functions defined on $U=\I\times (0,s_0)$, where $0<s_0\leq 1$ and $\I=(-1,1)\times\ldots\times (-1,1)\times(-1,\beta)\subset \R^n$ with $\beta\in(0,1]$. Moreover, let us denote fences
\begin{align*}
\I^{\pm}_i=\left\{x\in\overline{\I}\colon x^i=\pm 1\right\}\quad\textrm{for $i=1,\ldots,n-1$}\quad{\rm and}\quad \I^-_n=\left\{x\in\overline{\I}\colon x^n=-1\right\}, \quad\I^+_n=\left\{x\in\overline{\I}\colon x^n=\beta\right\}.
\end{align*}
 
\begin{lem}\label{lemosz}
Let us assume that $f\in C^{\ac}(\overline{U})$.
Let $N\subset U$ be such that 
\begin{align*}
\dist{N}{\I^{\pm}_i}>0
\end{align*}
for $i=1,\ldots,n-1$ and 
\begin{align*}
\dist{N}{\I^-_n}>0.
\end{align*}
In addition, we assume $\dist{N}{\mathcal{I}\times\{0\}}>0$ and  $\supp (f)\subset N$.

Let 
\begin{align*}
v(y,s)=\int_0^s\int_{\I}f(x,t)G(x,t;y,s)\dx x\dx t\textrm{ for }(y,s)\in U,
\end{align*}
then $v_s\in C^{\ac}({\overline{U}})$ and the following inequality
\begin{align*}
&\frac{\left|v_s(y_1,s_1)-v_s(y_2,s_2)\right|}{d^{\alpha(y_1,s_1)}\left((y_1,s_1),(y_2,s_2)\right)} \\&\qquad\leq
C\left([f]_{(y_1,s_1),\alpha(y_1,s_1),U}+[f]_{(y_1,s_2),\alpha(y_1,s_2),U}+[f]_{(y_2,s_1),\alpha(y_2,s_1),U}+[f]_{(y_2,s_2),\alpha(y_2,s_2),U}\right)
\end{align*}
is satisfied for $(y_1,s_1),(y_2,s_2)\in U$ and $C=C\left(\alpha^+,\alpha^-,c_{\log}(\alpha)\right)$.
\end{lem}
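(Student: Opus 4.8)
The plan is to follow the proof of Theorem~\ref{lem1} almost verbatim, using that $v$ is the heat potential of $f$ and that $G_s$ obeys exactly the same Gaussian bound~(\ref{helpineq2}) (the case $k=1$, $j=0$) as $D_y^2G$, so that every integral estimate in that proof has a direct analogue here. The role of the hypotheses $\supp f\subset N$ and $\dist{N}{\cdot}>0$ is only to delete the lateral and bottom boundary of $U$ from the analysis, so that the situation is in fact simpler than in Theorem~\ref{lem1}, where a cut-off and a spherical boundary term were present.

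First I would establish the representation
\begin{align*}
v_s(y,s)=f(y,s)+\int_0^s\int_{\I}G_s(x,t;y,s)\bigl(f(x,t)-f(y,s)\bigr)\dx x\,\dx t=:f(y,s)+H(y,s).
\end{align*}
Differentiating under the integral sign produces the boundary term $\lim_{t\to s^-}\int_{\I}G(x,t;y,s)f(x,t)\dx x=f(y,s)$ together with $\int_0^s\int_{\I}G_sf\,\dx x\,\dx t$; adding and subtracting $f(y,s)$ and using $\int G_s(x,t;y,s)\dx x=0$ for $s>t$ disposes of the $f(y,s)$ part. (Since $\supp f\subset N$ lies at positive distance from $\I_i^{\pm}$, $\I_n^-$ and $\I\times\{0\}$, the $x$-integral may be taken over $\R^n$, resp.\ over the half-space cut off by $\I_n^+$, and all of the facts and bounds used below are insensitive to this restriction since they rely only on $G\ge 0$ and~(\ref{helpineq2}); this is also why in the application of Theorem~\ref{schesbrz} one may pass to the reflected kernel $\overline G$.) The remaining $t$-integral converges absolutely because
\begin{align*}
|f(x,t)-f(y,s)|\le[f]_{\alpha(y,s),(y,s),U}\bigl(|x-y|^{\alpha(y,s)}+|t-s|^{\alpha(y,s)/2}\bigr)
\end{align*}
kills the $(s-t)^{-1}$ non-integrability of $\int_{\R^n}|G_s|\,\dx x$. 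Estimating $H$ as $H_1$ and $H_2$ were estimated in Steps~3--4 of the proof of Theorem~\ref{lem1} (now without cut-off and without spherical term) gives $|v_s|\le C\bigl([f]_{\alpha(y,s),(y,s),U}s_0^{\alpha(y,s)/2}+|f|_{0,U}\bigr)$, so in particular $v_s\in C(\overline U)$.

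For the seminorm bound, fix $P_1=(y_1,s_1)$, $P_2=(y_2,s_2)$ in $U$ and assume $s_1\ge s_2$. Writing $v_s(P_1)-v_s(P_2)=\bigl(f(P_1)-f(P_2)\bigr)+\bigl(H(P_1)-H(P_2)\bigr)$, the first difference is at most $[f]_{\alpha(P_1),P_1,U}\,d^{\alpha(P_1)}(P_1,P_2)$ outright. For $H(P_1)-H(P_2)$ I would reproduce the treatment of $H'$ in Steps~6--7 of the proof of Theorem~\ref{lem1}, replacing $D_y^2G$ by $G_s$ and $D_y^3G$ by $D_sD_yG$: cut the $t$-integral at $s_2-\eta$ with $\eta=\tfrac14 d^2(P_1,P_2)$. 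On $t\in(s_2-\eta,s_i)$ each piece is estimated directly by $(s_i-t)^{-(n+2)/2}$ times the Gaussian and the H\"older increment of $f$ at $P_i$, yielding $[f]_{\alpha(P_i),P_i,U}\,\eta^{\alpha(P_i)/2}$; on $t<s_2-\eta$, where $s_i-t\ge\eta$, the mean value theorem in $(y,s)$ together with~(\ref{helpineq2}) gains a factor $d(P_1,P_2)(s-t)^{-1/2}+d^2(P_1,P_2)(s-t)^{-1}$, which is integrable against the remaining kernel power. The telescoping of the integrand from $G_s(x,t;y_1,s_1)(f(x,t)-f(y_1,s_1))$ to $G_s(x,t;y_2,s_2)(f(x,t)-f(y_2,s_2))$ through the two intermediate centres $(y_1,s_2)$ and $(y_2,s_1)$ is exactly what forces the four pointwise seminorms $[f]_{\alpha(y_i,s_j),(y_i,s_j),U}$ appearing on the right-hand side.

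It then remains to reconcile exponents: each factor $d^{\alpha(R)}(P_1,P_2)$ with $R$ one of the four corner points is turned into $d^{\alpha(P_1)}(P_1,P_2)$ at the cost of a constant depending only on $c_{\log}(\alpha)$, $\alpha^+$, $\alpha^-$ (and on $s_0\le1$), precisely as in cases~(a)--(b) at the end of the proof of Theorem~\ref{sches}, using $|\alpha(R)-\alpha(P_1)|\,|\ln d(P_1,P_2)|\le c_{\log}(\alpha)$ and $d(R,P_1)\le d(P_1,P_2)$. Dividing through by $d^{\alpha(P_1)}(P_1,P_2)$ yields the asserted inequality; combined with the pointwise bound on $|v_s|$ this gives $v_s\in C^{\alpha(\cdot)}(\overline U)$. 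I expect the main obstacle to be the bookkeeping in the third step --- arranging the telescoped decomposition of $H(P_1)-H(P_2)$ so that nothing worse than the four corner seminorms appears, while tracking the variable exponent through every term --- which is the same delicate computation already carried out in Steps~5--7 of the proof of Theorem~\ref{lem1}; the genuine analytic difficulty, the non-integrability of $(s-t)^{-1}$ at $t=s$, is neutralized by the H\"older subtraction as above.
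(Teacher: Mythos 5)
Your plan is correct and mirrors the paper's proof. The paper reaches the equivalent representation $v_s(y,s)=\int_{-\infty}^s\int_\I\bigl(f(x,t)-f(y,s)\bigr)\Delta_yG(x,t;y,s)\,\dx x\,\dx t$ by extending $f$ by zero for $t<0$, so that $\int_{-\infty}^s\int_\I G_t=1$ absorbs the stray $f(y,s)$ — this is the same bookkeeping as your $v_s=f(y,s)+H$ with $H$ computed over $\R^n$ — and then treats the two special cases $s_1=s_2$ and $y_1=y_2$ by cutting the time integral at scale $\gamma=|y_1-y_2|^2$, resp.\ $\theta=s_1-s_2$, with a Mean Value Theorem gain against~(\ref{helpineq2}) on the long piece and an $\eta^{\alpha/2}$ bound on the short piece, before reducing the general case by a triangle inequality through $(y_1,s_2)$; your telescoping through both intermediate corners is a cosmetic variant of the same strategy, and the log-H\"older normalization of the exponents is handled the same way.
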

\begin{proof}
Let us recall the definition of $G$
$$
G(x-y,t-s)=G(x,t;y,s)=\frac{(s-t)^{-n/2}}{(2\sqrt{\pi})^n}\expo{-\frac{|x-y|^2}{4(s-t)}}.
$$
We can extend $f$ on set $\mathcal{I}\times (-\infty,0)$ by $0$. Therefore, we treat $f$ as an extension.
 
First, we consider the case when $s=s_1=s_2$ and let us take arbitrary $y_1,y_2\in \I$. We denote $P=(y_1,s)$ and $Q=(y_2,s)$.
We have
\begin{align}\label{eq571i}
v_s(y_1,s)=f(y_1,s)+\int_0^s\int_{\I}f(x,t)G_s(x,t;y_1,s)\dx x\dx t=f(y_1,s)-\int_{-\infty}^s\int_{\I}f(x,t)G_t(x,t;y_1,s)\dx x\dx t.
\end{align}
The function $G(\cdot,\cdot;y_1,s)$ satisfies the equation
\begin{align*}
G_t(\cdot,\cdot;y_1,s)+\Delta_xG(\cdot,\cdot;y_1,s)=0,
\end{align*}
so from (\ref{eq571i}) we have
\begin{align}\label{1503in1}
v_s(y_1,s)={}&f(y_1,s)-\int_{-\infty}^s\int_{\I}f(x,t)G_t(x,t;y_1,s)\dx x\dx t\nonumber\\
={}&f(y_1,s)-f(y_1,s)\int_{-\infty}^s\int_{\I}G_t(x,t;y_1,s)\dx x\dx t\nonumber\\&+\int_{-\infty}^s\int_{\I}\left(f(x,t)-f(y_1,s)\right)\Delta_yG(x,t;y_1,s)\dx x\dx t.
\end{align}
Since  
$\int^s_{-\infty}\int_{\I}G_t(x,t;y_1,s)\dx x\dx t=1$
 equality (\ref{1503in1}) we can rewrite as follows
\begin{align*}
v_s(y_1,s)=\int_{-\infty}^s\int_{\I}\left(f(x,t)-f(y_1,s)\right)\Delta_yG(x,t;y_1,s)\dx x\dx t.
\end{align*}
The same equality we have for $y_2$.

Let $\gamma=\left|y_1-y_2\right|^2$. We  write
\begin{align*}
v_s(y_1,s)-v_s(y_2,s)=I_1-I_2+I_3-I_4\left(f(y_1,s)-f(y_2,s)\right),
\end{align*}
where
\begin{align*}
I_1={}&I_1(y_1,s)=\int_{s-\gamma}^s\int_{\I}\Delta_yG(x,t;y_1,s)\left(f(x,t)-f(y_1,s)\right)\dx x\dx t,\nonumber\\
I_2={}&I_1(y_2,s)=\int_{s-\gamma}^s\int_{\I}\Delta_yG(x,t;y_2,s)\left(f(x,t)-f(y_2,s)\right)\dx x\dx t,\nonumber\\
I_3={}&\int^{{s-\gamma}}_{-\infty}\int_{\I}\left[\Delta_yG(x,t;y_1,s)-\Delta_yG(x,t;y_2,s)\right]\left(f(x,t)-f(y_1,s)\right)\dx x\dx t,\nonumber\\
I_4={}&\int^{{s-\gamma}}_{-\infty}\int_{\I}\Delta_yG(x,t;y_2,s)\dx x\dx t.
\end{align*}
We shall bound $\I_1,\ldots,\I_4$.
Let us start with $I_1$. By inequality (\ref{helpineq1}) and the fact that $f$ is H\"older continuous we obtain
\begin{align*}
|I_1|
\leq C[f]_{\alpha(P),P,U}\int_{s-\gamma}^s\int_{\I}(s-t)^{-(n+2)/2}\expo{-\frac{|x-y_1|^2}{5(s-t)}}\left(|x-y_1|+(s-t)^{1/2}\right)^{\alpha(y_1,s)}\dx x\dx t.
\end{align*}
Next, we substitute $|x-y_1|=(s-t)^{1/2}\rho$ and we get
\begin{align*}
|I_1|\leq {}&C[f]_{\alpha(P),P,U}\int^s_{s-\gamma}\int^{\infty}_0(s-t)^{-1}(s-t)^{\alpha(y_1,s)/2}(1+\rho^{\alpha(y_1,s)})\rho^{n-1}\expo{-C\rho^2}\dx \rho \dx t\\
\leq {}&C[f]_{\alpha(P),P,U}\int^s_{s-\gamma}(s-t)^{\alpha(y_1,s)/2-1}\dx t\leq C[f]_{\alpha(P),P,U}\gamma^{\alpha(y_1,s)/2}=C[f]_{\alpha(P),P,U}|y_1-y_2|^{\alpha(y_1,s)}.
\end{align*}
The equivalent thing we have got with the term $I_2$
\begin{align*}
|I_2|\leq  C[f]_{\alpha(Q),Q,U}\gamma^{\alpha(y_2,s)/2}=C[f]_{\alpha(Q),Q,U}|y_1-y_2|^{\alpha(y_2,s)}.
\end{align*}

In order to estimate $I_3$, we define 
\begin{align*}
\psi(\zeta)=y_1+\zeta(y_2-y_1)\textrm{ for }0\leq\zeta\leq 1.
\end{align*}
Thus, we get
\begin{align*}
|I_3|&\leq C[f]_{\alpha(P),P,U}\int_0^{s-\gamma}\int_{\I}\int^1_0\left|D^3_yG(x,t;\psi(\zeta),s)\right||y_1-y_2|(|x-y_1|+|s-t|^{1/2})^{\alpha(y_1,s)}\dx\zeta \dx x\dx t\\
&\leq
C[f]_{\alpha(P),P,U}
\int_0^{s-\gamma}\int_{\I}\int^{1}_{0}\left|D^3_yG(x,t;\psi(\zeta),s)\right||y_1-y_2|\\&\qquad\qquad\qquad\cdot\left(|x-\psi(\zeta)|^{\alpha(y_1,s)}+|\psi(\zeta)-y_1|^{\alpha(y_1,s)}+|s-t|^{\alpha(y_1,s)/2}\right)\dx\zeta \dx x\dx t.
\end{align*}
There again we substitute $|x-\psi(\zeta)|=(s-t)^{1/2}\rho$
\begin{align*}
|I_3|\leq{}& C[f]_{\alpha(P),P,U}|y_1-y_2|\int_{-\infty}^{s-\gamma}\int^{1}_{0}(s-t)^{-3/2}\left((s-t)^{\alpha(y_1,s)/2}+|\psi(\zeta)-y_1|^{\alpha(y_1,s)}\right)\dx\zeta \dx t\\
\leq{} &C[f]_{\alpha(P),P,U}|y_1-y_2|\int^{s-\gamma}_{-\infty}(s-t)^{(\alpha(y_1,s)-3)/2}+|y_1-y_2|^{\alpha(y_1,s)}{(s-t)^{-3/2}}\dx t\\
\leq{}& C[f]_{\alpha(P),P,U}|y_1-y_2|^{\alpha(y_1,s)}.
\end{align*}
Finally we turn our attention to $I_4$. By direct calculations we have
\begin{align*}
|I_4|=\int_{-\infty}^{s-\gamma}\int_{\I}G_t(x,t;y_2,s)\dx x\dx t=\int_{\I}G(x,s-\gamma;y_2,s)\dx x\leq C.
\end{align*}
Now, if we join together all these inequalities, we shall finish the proof for this case.

Subsequently, we will consider the case $y_1=y_2=y$ and $s_1\neq s_2$. Without loss of generality we can assume $s_2<s_1$. We again denote $P=(y,s_1)$ and $Q=(y,s_2)$.
Recall the equality
\begin{align*}
v_s(y,s_i)=\int_{-\infty}^{s_i}\int_{\I}\left(f(x,t)-f(y,s_i)\right)G_s(x,t,y,s_i)\dx x\dx t,
\end{align*}
where $i=1,2$.
Let us denote $\theta=s_1-s_2$. 
We shall estimate the expression
\begin{align*}
v_s(y,s_1)-v_s(y,s_2)=J_1-J_2+J_3+\left(f(y,s_2)-f(y,s_1)\right)J_4+J_5,
\end{align*}
where
\begin{align*}
J_1={}&\int^{s_2}_{s_2-\theta}\int_{\I}(f(x,t)-f(y,s_1))G_s(x,t;y,s_1)\dx x\dx t,\\
J_2={}&\int^{s_2}_{s_2-\theta}\int_{\I}(f(x,t)-f(y,s_2))G_s(x,t;y,s_2)\dx x\dx t,\\
J_3={}&\int_{-\infty}^{s_2-\theta}\int_{\I}\left(f(x,t)-f(y,s_2)\right)\left(G_s(x,t;y,s_1)-G_s(x,t;y,s_2)\right)\dx x\dx t,\\
J_4={}&\int_{-\infty}^{s_2-\theta}\int_{\I}G_s(x,t;y,s_1)\dx x\dx t,\\
J_5={}&\int_{s_2}^{s_1}\int_{\I}\left(f(x,t)-f(y,s_1)\right)G_s(x,t;y,s_1)\dx x\dx t.
\end{align*}

We have
\begin{align*}
|J_1|\leq C[f]_{\alpha(P),P,U}\int^{s_2}_{s_2-\theta}\int_{\I}\left((s_1-t)^{\alpha(y,s_1)/2}+|x-y|^{\alpha(y,s_1)}\right)\expo{-\frac{|x-y|^2}{5(s_1-t)}}(s_1-t)^{-1-n/2}\dx x\dx t.
\end{align*}
Now, we substitute $|x-y|=(s_1-t)^{1/2}\rho$. It yields
\begin{align*}
|J_1|\leq  C[f]_{\alpha(P),P,U}\int^{s_2}_{s_2-\theta}(s_1-t)^{\alpha(y,s_1)/2-1}\dx t=C[f]_{0,\ac,U}(s_1-s_2)^{\alpha(y,s_1)/2}.
\end{align*}
In the similar way we bound the integral $J_2$ and we have
\begin{align*}
|J_2|\leq C[f]_{\alpha(Q),Q,U}(s_1-s_2)^{\alpha(y,s_1)/2}.
\end{align*}
Next, we  estimate $J_3$
\begin{align*}
|J_3|&\leq C [f]_{\alpha(P),P,U}\int_0^{s_2-\theta}\int_{\I}\int_{s_2}^{s_1}\left(|x-y|^{\alpha(y,s_1)}+(s_2-t)^{\alpha(y,s_1)/2}\right)G_{ss}(x,t;y,z)\dx z\dx x\dx t\\&
\leq C [f]_{\alpha(P),P,U}\int_0^{s_2-\theta}\int_{\I}\int_{s_2}^{s_1}\left(|x-y|^{\alpha(y,s_1)}+|s_2-z|^{\alpha(y,s_1)/2}+|z-t|^{\alpha(y,s_1)/2}\right)\\&\qquad\qquad\qquad\qquad\cdot\expo{-\frac{|x-y|^2}{5(z-t)}}(z-t)^{-n-2}\dx z\dx x\dx t.
\end{align*}
We substitute $|x-y|=(z-t)^{1/2}\rho$, what yields
\begin{align*}
|J_3|\leq {}&C[f]_{\alpha(P),P,U}\int_{0}^{s_2-\theta}\int^{s_1}_{s_2}\left(\frac{|s_2-z|^{\alpha(y,s_1)/2}}{(z-t)^2}+|z-t|^{\alpha(y,s_1)/2-2}\right)\dx z\dx t\\
\leq{}& C[f]_{\alpha(P),P,U}\int_{0}^{s_2-\theta}\int^{s_1}_{s_2}\left(\frac{|s_2-z|^{\alpha(y,s_1)/2}}{(s_2-t)^2}+|z-t|^{\alpha(y,s_1)/2-2}\right)\dx z\dx t\leq C[f]_{\alpha(P),P,U}(s_1-s_2)^{\alpha(y,s_1)/2}.
\end{align*}
The term $J_5$ we estimate in the similar way as $J_1$
\begin{align*}
|J_5|\leq C[f]_{\alpha(P),P,U}(s_1-s_2)^{\alpha(y,s_1)/2}. 
\end{align*}
It is easy to see that the term $J_4$ is bounded in the independent way of $P$ and $Q$.
Hence, we have proved Lemma for $y_1=y_2=y$ and$s_1\neq s_2$.

Let us consider now general case, i.e. $y_1$, $y_2$, $s_1$ and $s_2$ are arbitrary
\begin{align*}
|f(y_1,s_1)-f(y_2,s_2)|&\leq |f(y_1,s_1)-f(y_1,s_2)|+|f(y_1,s_2)-f(y_2,s_2)|\\
&\leq C\left([f]_{\alpha(y_1,s_1),(y_1,s_1),U}+[f]_{\alpha(y_1,s_2),(y_1,s_2),U}\right)|s_1-s_2|^{\alpha(y_1,s_1)/2}\\
&\qquad\qquad+\left([f]_{\alpha(y_1,s_2),(y_1,s_2),U}+[f]_{\alpha(y_2,s_2),(y_2,s_2),U}\right)|y_1-y_2|^{\alpha(y_2,s_2)}\\
&\leq  C\left([f]_{\alpha(y_1,s_1),(y_1,s_1),U}+[f]_{\alpha(y_1,s_2),(y_1,s_2),U}+[f]_{\alpha(y_1,s_2),(y_1,s_2),U}+[f]_{\alpha(y_2,s_2),(y_2,s_2),U}\right)\\  &\qquad \qquad\qquad \qquad\qquad\qquad\qquad\qquad\qquad\qquad\qquad\qquad\qquad\cdot d\left((y_1,s_1),(y_2,s_2)\right)^{\alpha(y_1,s_1)}.
\end{align*}
In this way we have finished the proof.
\end{proof}
\end{appendices}

\bibliography{parabolbib}
\bibliographystyle{acm}
\end{document}